\documentclass[12pt]{amsart}
\usepackage[utf8]{inputenc}

\usepackage{BAstyle}
\usepackage[margin=1in]{geometry}
\interfootnotelinepenalty=1000

\newtheorem{remark}[theorem]{Remark}
\theoremstyle{plain}
\newtheorem{data}[theorem]{Data Analysis}

\title{Power Savings for Counting (Twisted) Abelian Extensions of Number Fields}
\author{Brandon Alberts}
\address{Eastern Michigan University}
\email{balbert1@emich.edu}

\begin{document}

\begin{abstract}
We prove significant power savings for the error term when counting abelian extensions of number fields (as well as the twisted version of these results for nontrivial Galois modules). In some cases over $\Q$, these results reveal lower order terms following the same structure as the main term that were not previously known. Assuming the generalized Lindel\"of hypothesis for Hecke $L$-functions, we prove square root power savings for the error compared to the order of the main term.
\end{abstract}

\maketitle

\setcounter{tocdepth}{1}
\tableofcontents

\pagebreak
\section{Introduction}

The primary focus in number field counting problems has been on identifying the main term of the asymptotic growth rate of number fields extensions with given properties. Given a number field $K$ with absolute Galois group $G_K$ and a transitive subgroup $G\subseteq S_n$, Malle \cite{malle2002,malle2004} studied the growth rate of the function
\[
\#\Surj(G_K,G;X) = \#\{\pi\in \Surj(G_K,G) : |\disc(\pi)|\le X\},
\]
where $\disc(\pi)$ is the discriminant of the field fixed by $\pi^{-1}(\Stab_G(1)) \le G_K$ and $|\cdot|$ denotes the norm down to $\Q$. By the Galois correspondence, this is equivalent (up to a constant factor) to the function counting degree $n$ extensions $L/K$ with Galois group $G$ and bounded discriminant. Malle predicts that this function grows asymptotic to
\[
c X^{1/a(G)}(\log X)^{b(K,G)-1}.
\]
as $X$ tends towards infinity, where $c>0$ is some positive constant, $a(G) = \min_{g\ne 1} \ind(g)$, $\ind(g) = n - \#\{\text{orbits of }g\}$, and $b(K,G)$ is the number of orbits of non-identity conjugacy classes of $G$ under the cyclotomic action of minimal index.

\begin{remark}
    It is known that, for certain groups, $b(K,G)$ is the incorrect invariant to place in the power of the logarithm. There does exist a modified conjecture which attempts to correct for this error due to T\"urkelli \cite{alberts2021,turkelli2015}, but we will not detail it in this paper. For all the groups we consider in this paper, Malle's conjecture is known to be correct with $b(K,G)-1$ as the correct power of $\log X$.
\end{remark}

In virtually every known case of Malle's conjecture, the methods used can do a little better than the main term and will instead prove something of the form
\[
\#\Surj(G_K,G;X) = X^{1/a(G)}P(\log X) + o(X^{1/a(G)}),
\]
where $P$ is a polynomial of degree $b(K,G)$ with positive leading coefficient. This quality of error term is not always mentioned explicitly, but often readily follows from the author's methods (most often a Tauberian theorem). We summarize the current progress towards Malle's conjecture in Subsection \ref{subsec:history}. The relevant case for us is that Malle's conjecture is proven for all abelian groups in \cite{wright1989}.

The goal of this paper is to prove power saving error bounds for $\#\Surj(G_K,G;X)$ when $G$ is an abelian group, of the form $O(X^{1/a(G) - \delta})$ for some $\delta > 0$. Currently, only the groups $S_3$ \cite{taniguchi-thorne2013}, $S_3\times A$ for $A$ abelian \cite{jwang2017}, and $D_4$ have explicit power saving error bounds rigorously treated in the literature. The existence of power savings for abelian groups is proven in \cite{frei-loughran-newton2018}, but is not made explicit. Cohen--Diaz y Diaz--Olivier remark in \cite{cohen-diaz-y-diaz-olivier2006} that standard contour integration techniques produce an explicit power saving error bound for counting $C_4$-extensions, although they could not find such a result in the literature. Similar reasoning can be applied to other small abelian groups like $C_2$ and $C_3$, but explicit power saving error bounds for counting abelian extensions have not been treated in general.

The power savings for $S_3$-, $S_3\times A$-, and $C_4$-extensions over $\Q$ also reveal a secondary term in the asymptotic growth rate. A secondary goal of this paper is to prove some results on the existence of lower order terms for counting abelian extensions.

\subsection{Power Saving Bounds for Counting Abelian Extensions}
We give a rigorous treatment of the ``standard contour integration technique". We first prove a meromorphic continuation for the generating Dirichlet series:

\begin{theorem}\label{thm:generating_mero}
    Let $K$ be a number field and $G$ an abelian group. The generating Dirichlet series for $G$-extensions of $K$ ordered by discriminant is meromorphic on the right halfplane ${\rm Re}(s) > 0$.

    Any poles with real part larger than $\frac{1}{2a(G)}$ occur at $s=\frac{1}{d}$ for each $d\in \ind(G-\{1\})$ of order \textbf{at most} $b_d(K,G)$, where
    \[
    b_d(K,G) = \#\{\text{orbits }o\text{ of }G\text{ under }\chi\text{ such that }\ind(o)=d\}.
    \]
    Here, $\chi:G_K\to \hat{Z}^\times$ is the cyclotomic character and induces a Galois action on $G$ by $x.g = g^{\chi(g)}$.
\end{theorem}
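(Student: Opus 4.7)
The strategy is the standard class-field-theoretic rewriting of the generating Dirichlet series as a (finite sum of) products of Hecke $L$-functions, in the spirit of Wright and of Cohen--Diaz y Diaz--Olivier. First I would apply Möbius inversion over the subgroup lattice of $G$,
\[
\sum_{\pi\in\Surj(G_K,G)}|\disc(\pi)|^{-s} \;=\; \sum_{H\le G}\mu(G,H)\,F_H(s),
\qquad F_H(s):=\sum_{\phi\in\Hom(G_K,H)}|\disc_G(\phi)|^{-s},
\]
where the $G$-discriminant is used uniformly on the right. The conductor--discriminant formula rewrites $|\disc_G(\phi)|=\prod_{\chi\in\widehat G\setminus\{1\}}N(\mathfrak f(\chi\circ\phi))$, and since $\chi\circ\phi$ depends only on $\chi|_H$ (with $[G:H]$ lifts per character of $H$), the series $F_H(s)$ repackages, via the class-field-theoretic isomorphism $\Hom(G_K,H)\cong\Hom(\widehat H,\widehat{C_K})$, as a weighted sum over tuples of Hecke characters.

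Next, I would expand $F_H(s)$ as an Euler product of local sums and then carry out a local Fourier inversion to match each local factor against Euler factors of Hecke $L$-functions. Because the cyclotomic Galois action on $\widehat H$ preserves local conductors, the natural units for this matching are cyclotomic orbits on $\widehat H$. After reassembly, $F_H(s)$ equals a finite sum (indexed by a choice of ``pivot'' character per orbit) of expressions of the form
\[
\Biggl(\prod_{o}L(\ind(o)\,s,\chi_o)\Biggr)\,R(s),
\]
where $o$ ranges over orbits on $\widehat H\setminus\{1\}$, $\chi_o$ is the Hecke character attached to $o$, and $R(s)$ is a Dirichlet series absolutely convergent on $\mathrm{Re}(s)>\tfrac{1}{2a(G)}$, because primes contributing nontrivially to $R(s)$ must support at least two simultaneously ramified orbit-characters, forcing local conductor weight at least $2a(G)$.

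Finally, a Hecke $L$-function $L(c\,s,\chi)$ is entire on $\mathrm{Re}(s)>0$ unless $\chi$ is trivial, in which case it has a simple pole at $s=1/c$. Hence any pole of $F_H(s)$ with $\mathrm{Re}(s)>\tfrac{1}{2a(G)}$ lies at $s=1/d$ for $d=\ind(o)\in\ind(H\setminus\{1\})\subseteq\ind(G\setminus\{1\})$, with order bounded by the count of index-$d$ orbits $o$ on $\widehat H$ whose $\chi_o$ is trivial. Pontryagin duality pairs orbits on $\widehat H$ with orbits on $H$ of the same order (hence the same index), and aggregating via $\mu(G,H)$ then pulling back to $G$ yields the bound $b_d(K,G)$, with the ``at most'' accommodating any cancellation from the Möbius sum. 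The chief obstacle is the middle step: one must execute the local Fourier expansion so that each $L$-factor appears with the correct shift $\ind(o)\,s$ and that the residual $R(s)$ is provably holomorphic past $\mathrm{Re}(s)=\tfrac{1}{2a(G)}$, which reduces to the sharp inequality $\sum_\psi v_\mathfrak p(\mathfrak f(\psi\circ\phi))\ge 2a(G)$ whenever two distinct orbit-characters ramify simultaneously at a prime $\mathfrak p$.
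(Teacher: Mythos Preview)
Your outline follows the classical Wright route (M\"obius over subgroups, then conductor--discriminant and local Fourier analysis) rather than the paper's method, and it is in principle workable for the pole statement on ${\rm Re}(s)>\frac{1}{2a(G)}$. The paper proceeds differently: it first applies Poisson summation on $H^1(K,G)$ (as set up in \cite{alberts-odorney2021}) to write the generating series for all homomorphisms as a finite sum $\sum_{h} \widehat{w_s}(h)$ over the dual Selmer group $H^1_{ur^\perp}(K,G^*)$, then proves (Theorem~\ref{thm:euler_factors}) that each tame Euler factor of $\widehat{w_s}(h)$ equals $\sum_o {\rm tr}(\rho_h|_{\C o}(\Fr_p))\,p^{-\ind(o)s}$ for a twisted permutation representation $\rho_h$ built from a uniform ``parametrizing local Tate pairing'' (Theorem~\ref{thm:parametrizing}), and only afterwards sieves to surjections. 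The orbit indexing is on $\Hom((\Q/\Z)^*,G)\cong G$ rather than on $\widehat G$; this makes the shift $\ind(o)s$ immediate from the identity $\nu_p(\disc)=\ind(\phi(\tau_p))$, whereas in your dual picture the conductor--discriminant formula assigns weight $1$ per nontrivial $\psi$, so the passage to shifts $\ind(o)s$ is exactly the local computation you leave implicit. Likewise, your reason for $R(s)$ converging past $\frac{1}{2a(G)}$ is not a conductor inequality for ``two simultaneously ramified orbit-characters'' but the purely algebraic identity $\prod_j(1+a_jx^{c_j})=1+\sum_j a_jx^{c_j}+O(x^{2\min c_j})$ applied to the Euler factor (this is how Corollary~\ref{cor:meromorphic_continuation} is argued).

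There is a genuine gap for the first assertion of the theorem: your factorization only yields meromorphicity on ${\rm Re}(s)>\frac{1}{2a(G)}$, since that is where $R(s)$ converges. The paper obtains the full halfplane ${\rm Re}(s)>0$ by noting that the Euler factors have the exact shape $1+\sum_j \chi_j(\Fr_p)p^{-a_js}$ for Galois characters $\chi_j$ and invoking Moroz's continuation theorem \cite{moroz_1988} (Corollary~\ref{cor:mero_moroz}). Your plan never exhibits the Euler factors in this form (the characters are only implicit in your ``pivot'' description), so to close the gap you would need either to do so and cite Moroz, or to iterate your $L$-function extraction past every threshold $\frac{1}{na(G)}$.
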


Each pole is expected to contribute a term to the asymptotic growth rate. This is usually achieved by a Tauberian theorem, which is often proven by shifting a contour integral past the poles. This process leaves behind some error term, and we define a new invariant to capture the power saving bound for the error term in $\#\Surj(G_K,G;X)$.

\begin{definition}\label{def:theta}
    Let $K$ be a number field, $G\subseteq S_n$ a transitive group, and $D_{K,G}(s)$ the generating series of $G$-extensions of $K$. Define $\theta(K,G)$ to be the infimum of all real numbers $\theta$ for which
    \begin{enumerate}[(a)]
        \item $D_{K,G}(s)$ has a meromorphic continuation to ${\rm Re}(s) > \theta$ with at most finitely many poles $s_1,s_2,...,s_r$, and
        \item The number of $G$-extensions of $K$ satisfies the bound
        \[
            \#\Surj(G_K,G;X) - \sum_{j=1}^r \underset{s=s_j}{\rm Res}\left(D_{K,G}(s)\frac{X^s}{s}\right) = O(X^{\theta}),
        \]
        where the implied constant may depend on $K$ and $G\subseteq S_n$.
    \end{enumerate}
    If $s_1,s_2,...,s_r$ are ordered so that ${\rm Re}(s_1)\ge {\rm Re}(s_2)\ge \cdots \ge {\rm Re}(s_r)$ with decreasing order of the poles with identical real parts, we call any term $\underset{s=s_j}{\rm Res}\left(D_{K,G}(s)\frac{X^s}{s}\right)$ a \textbf{lower order term} if ${\rm Re}(s_j) < {\rm Re}(s_1)$ or ${\rm Re}(s_j)={\rm Re}(s_1)$ with $s_j$ having a lower order than $s_1$.
\end{definition}

A residue calculation implies that
\[
\underset{s=s_j}{\rm Res}\left(D_{K,G}(s)\frac{X^s}{s}\right) = X^{s_j} P_{K,G,s_j}(\log X),
\]
where $P_{K,G,s_j}$ is a polynomial of degree one less than the order of the pole at $s_j$. When $G$ is abelian, Theorem \ref{thm:generating_mero} tells us the locations (but not the orders) of all the poles of $D_{K,G}(s)$ with real part $>1/2a(G)$, so that
\[
\#\Surj(G_K,G;X) = \sum_{d\in \ind(G-\{1\})} X^{1/d} P_{K,G,1/d}(\log X) + O(X^{\max\{\theta(K,G),1/2a(G)\}+\epsilon})
\]
where $\deg P_{K,G,1/d} \le b_d(K,G)-1$. This gives an asymptotic expansion for $\#\Surj(G_K,G;X)$ that includes the possible existence of lower order terms of the same shape as the main term that was predicted by Malle.

\begin{remark}
    We note that Definition \ref{def:theta}(a) does not imply Definition \ref{def:theta}(b). While the meromorphic continuation allows one to apply a contour shifting argument to the line ${\rm Re}(s) = \theta+\epsilon$ on Perron's formula, this does not immediately correspond to a power saving bound on the error term. See Section \ref{sec:Tauberian} for the Tauberian theorem we will use to produce a power saving, and specifically Subsection \ref{subsec:common_error} for a discussion on why the largest region of meromorphic continuation is not the same as the power saving bound for the error.
\end{remark}

We then prove upper bounds for $\theta(K,G)$. When $G$ is abelian, we give an explicit upper bound in terms of subconvexity bounds for Hecke $L$-functions in Theorem \ref{thm:best_bound}, which we discuss in the main body of the paper. This is done so that the bounds we prove for $\theta(K,G)$ will improve as researchers prove new subconvexity bounds. By inputting the subconvexity bounds proven in \cite{sohne1997}, we prove the following unconditional result:

\begin{corollary}\label{cor:uncond}
    Let $K$ be a number field and $G$ an abelian group. Then for each $D\in \ind(G-\{1\})$ or $D=2a(G)$,
    \[
    \theta(K,G) \le \frac{1}{a(G)} - \frac{\displaystyle\frac{1}{a(G)} - \frac{1}{D}}{\displaystyle 1+\sum_{\substack{g\in G-\{1\}\\ \ind(g) < D}} \frac{[K:\Q]}{3}\left(1 - \frac{\ind(g)}{D}\right)}.
    \]
\end{corollary}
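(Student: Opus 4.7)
The plan is to apply Theorem \ref{thm:best_bound} — which encodes the general relationship between $\theta(K,G)$, the combinatorial data $(a(G), \ind(G), D)$, and a subconvexity exponent for Hecke $L$-functions over $K$ — with the explicit subconvexity bound of S\"ohne \cite{sohne1997} as the numerical input. The corollary then reduces to substitution and algebraic simplification.

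First, I would recall S\"ohne's bound on Hecke $L$-functions over number fields and verify that it applies uniformly across the family of Hecke characters that arise in the Galois-theoretic decomposition of $D_{K,G}(s)$. Uniformity in both the $t$-aspect and the conductor aspect is required so that, after summing over all characters, the resulting error estimate is not destroyed. The constant $[K:\Q]/3$ appearing in the denominator of the corollary is precisely the numerical exponent read off from S\"ohne's result as it enters the Perron-type contour integral used to extract $\#\Surj(G_K,G;X)$.

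Next I would trace how the combinatorial weight $1 - \ind(g)/D$ arises: for each non-identity $g \in G$ with $\ind(g) < D$, the $L$-factor of $D_{K,G}(s)$ associated to the cyclotomic orbit of $g$ has a pole at $s = 1/\ind(g)$ (per Theorem \ref{thm:generating_mero}) and grows on the line $\mathrm{Re}(s) = 1/D$ at the rate furnished by S\"ohne's bound; interpolating via Phragm\'en--Lindel\"of between these two lines contributes a factor with exponent scaling like $1 - \ind(g)/D$. Summing over all such $g$ and balancing the accumulated growth against the room to shift $1/a(G) - 1/D$ produces the stated ratio. The admissible $D \in \ind(G - \{1\}) \cup \{2a(G)\}$ is exactly the set of candidate contour positions: either a pole location supplied by Theorem \ref{thm:generating_mero}, or the boundary $1/(2a(G))$ beyond which that theorem no longer locates poles.

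The main obstacle — which is the bulk of the work and is packaged into Theorem \ref{thm:best_bound} rather than into the corollary itself — is the uniformity of S\"ohne's bound and the bookkeeping of the contributions of each $g \ne 1$ through the contour shift. Given Theorem \ref{thm:best_bound}, the corollary is a direct substitution and algebraic simplification, optimized implicitly over the admissible choices of $D$, and does not present any significant analytic difficulty on its own.
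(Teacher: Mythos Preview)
Your overall approach is correct and matches the paper's: plug S\"ohne's subconvexity bound into Theorem \ref{thm:best_bound} and simplify. However, two points in your write-up are off.

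First, you claim that uniformity in the conductor aspect is required ``so that, after summing over all characters, the resulting error estimate is not destroyed.'' This is false and the paper explicitly disclaims it (Section \ref{sec:subconvexity}): only the $t$-aspect is used. The sum is over the \emph{finite} set $H^1_{ur^{\perp}}(K,T(\pi)^*)$, so the implied constants may depend on $h$ without harm.

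Second, you describe the substitution as mere algebra, but you omit the one non-formal step. Theorem \ref{thm:best_bound} has a sum over Galois orbits $o\subseteq \Hom((\Q/\Z)^*,T(\pi))$ weighted by $\mu_{\rho_h|_{\C o}}(1/2)$, together with a maximum over $h$. S\"ohne's bound gives $\mu_{\rho_h|_{\C o}}(1/2) \le \tfrac{|o|\,[K:\Q]}{6}$ because the Hecke character lives over a field of degree $|o|\cdot[K:\Q]$ (the permutation action $\overline{\rho}_h$ on $\Hom((\Q/\Z)^*,T(\pi))$ is independent of $h$). Orbit--stabilizer then converts $\sum_o |o|(\cdots)$ into $\sum_{g\in G-\{1\}}(\cdots)$, which simultaneously explains why the final bound has $[K:\Q]/3$ rather than the degree of the Hecke field, and why the $\max_h$ disappears. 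Without this step it is not clear how the orbit sum in Theorem \ref{thm:best_bound} becomes the element sum in the corollary, nor why the bound is uniform in $h$.
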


This implies that $\theta(K,G) < 1/a(G)$ really does give an unconditional power savings for abelian $G$, which was previously proven by Frei--Loughran--Newton \cite[Theorem 1.7]{frei-loughran-newton2018}. The best case scenario for subconvexity bounds is the generalized Lindel\"of hypothesis (which follows from GRH). We also prove the following conditional result:

\begin{corollary}\label{cor:GLH}
    Let $K$ be a number field and $G$ an abelian group. The generalized Lindel\"of hypothesis for Hecke $L$-functions implies that $\theta(K,G) \le \frac{1}{2a(G)}$.
\end{corollary}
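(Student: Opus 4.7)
The plan is to derive Corollary \ref{cor:GLH} as an immediate specialization of Theorem \ref{thm:best_bound}, which is the same theorem that, fed with S\"ohne's subconvexity bound, yields Corollary \ref{cor:uncond}. The only substantive change is to replace S\"ohne's subconvexity exponent $1/3$ with the arbitrarily small exponent $\epsilon$ supplied by the generalized Lindel\"of hypothesis, and then to let $\epsilon \to 0^{+}$.

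First, I would open up Theorem \ref{thm:best_bound} and track how a generic subconvexity exponent $\delta$ for Hecke $L$-functions on the critical line $\mathrm{Re}(s) = 1/2$ enters its upper bound for $\theta(K,G)$. Comparing with Corollary \ref{cor:uncond}, the factor $[K:\Q]/3$ in the denominator records precisely the input $\delta = 1/3$; replacing $1/3$ by a generic $\delta$ turns this factor into $[K:\Q]\delta$. The generalized Lindel\"of hypothesis asserts that for every $\epsilon > 0$ and every Hecke character $\chi$ of conductor $q_\chi$ one has $L(1/2+it,\chi) \ll_{K,\epsilon} ((|t|+1)q_\chi)^{\epsilon}$, so we may take $\delta = \epsilon$ for arbitrarily small $\epsilon > 0$.

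Second, I would invoke the resulting form of Theorem \ref{thm:best_bound} with $D = 2a(G)$ (which is a permissible choice) and with the Lindel\"of input $\delta = \epsilon$, giving
\[
\theta(K,G) \le \frac{1}{a(G)} - \frac{\displaystyle\frac{1}{a(G)}-\frac{1}{2a(G)}}{\displaystyle 1+\epsilon\sum_{\substack{g\in G-\{1\}\\ \ind(g)<2a(G)}}[K:\Q]\left(1-\frac{\ind(g)}{2a(G)}\right)}.
\]
As $\epsilon \to 0^{+}$ the denominator tends to $1$ and the numerator to $\frac{1}{2a(G)}$, so the right-hand side tends to $\frac{1}{a(G)} - \frac{1}{2a(G)} = \frac{1}{2a(G)}$. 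Since the inequality holds for every $\epsilon > 0$, we obtain $\theta(K,G) \le \frac{1}{2a(G)}$.

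The main obstacle is internal to Theorem \ref{thm:best_bound} rather than to this deduction: one must know that the Tauberian machinery of Section \ref{sec:Tauberian} remains valid as the target abscissa is pushed down to $\frac{1}{2a(G)}+\epsilon$ and as the subconvexity exponent is pushed all the way to zero. Concretely, one needs to verify that Lindel\"of-quality control on each Hecke $L$-function on the critical line translates, after the standard factorization of the abelian generating series $D_{K,G}(s)$ into Hecke $L$-functions via class field theory, into enough polynomial vertical growth of $D_{K,G}(\sigma+iT)$ on the line $\sigma = \frac{1}{2a(G)} + \epsilon$ for the Perron contour shift to produce an error of size $X^{1/(2a(G))+\epsilon}$. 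Once that is built into Theorem \ref{thm:best_bound}, the corollary follows by the above specialization.
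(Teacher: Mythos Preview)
Your proposal is correct and follows essentially the same route as the paper: specialize Theorem \ref{thm:best_bound} at $D = 2a(G)$ with the Lindel\"of input. The paper's proof is slightly cleaner in that it uses the formulation $\mu_{\rho_h|_{\C o}}(1/2) = 0$ directly in the denominator of Theorem \ref{thm:best_bound}, so the sum vanishes exactly and the bound reads $\frac{1}{a(G)} - \frac{1/a(G) - 1/(2a(G))}{1+0} = \frac{1}{2a(G)}$ without needing your $\epsilon \to 0$ limit; but this is a cosmetic difference, and your final paragraph of caveats is already absorbed into Theorem \ref{thm:best_bound}.
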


Thus, we expect square root savings compared to the main term predicted by Malle. It is notable that $\ind(g) < 2a(G)$ for all $g\in G$ by definition, which means that the square root power savings in Corollary \ref{cor:GLH} reveals all of the (potential) lower order terms associated to poles at $s=1/\ind(g)$ given in Theorem \ref{thm:generating_mero} and Definition \ref{def:theta}.

Our methods give analogous results in certain cases when we count $G$-extensions that satisfy local conditions or when we order $G$-extensions by other invariants like the product of ramified primes. We have opted to discuss only the discriminant ordering in the main body of the paper for the sake of readability, but we include a discussion on how these results generalize in Appendix \ref{app:local_cond_and_ordering}.

\subsection{Lower Order Terms}

Definition \ref{def:theta} allows for the possibility that $P_{K,G,1/d}$ vanishes, corresponding to the possibility that $s=1/d$ is not a pole of the generating series. This is allowed by Theorem \ref{thm:generating_mero}, as this theorem only prescribes an upper bound for the order of the pole (and we consider a ``pole of order zero" to mean a removable singularity). It is known that the pole at $s=1/a(G)$ has order exactly $b_{a(G)}(K,G) = b(K,G)$, giving the main term predicted by Malle. The usual argument used in \cite{wright1989,wood2009,alberts-odorney2021} to compute the order of this pole is to bound the counting below by a similar count with restricted local conditions. However, this argument no longer works for lower order terms.

We discuss some of the obstacles to proving non-vanishing of lower order terms in Section \ref{sec:obstructions_lower_order}. At this time, we are not able to completely overcome these obstacles. However, in certain nice cases, we are able to use ad hoc arguments to prove the existence of some lower order terms.

\begin{theorem}\label{thm:nonvanishing}
    Let $G$ be an abelian group, $\ell$ the smallest prime dividing $|G|$, and $D_{\Q,G}(s)$ the generating series for $G$-extensions of $\Q$ ordered by discriminant. Define
    \[
    \bar{b}_d(\Q,G) = b_d(\Q,G) - \sum_{\substack{g\in \ind(G-\{1\})\\ \ind(g)<d}} [\Q(\zeta_{|\langle g\rangle|}):\Q]^{-1}\underset{s=\frac{\ind(g)}{d}}{\rm ord}\left(\zeta_{\Q(\zeta_{|\langle g\rangle|})}(s)\right)
    \]
    if this quantity is nonegative, and $\bar{b}_d(\Q,G) = 0$ otherwise.
    \begin{enumerate}[(i)]
        \item If $d=\frac{|G|(\ell-1)}{\ell}=a(G)$ is the minimal index, then $D_{\Q,G}(s)$ has a pole at $s=1/d$ of order $b_d(\Q,G)=\bar{b}_d(\Q,G)$.
        \item If $d\in \ind(G-\{1\})$ is such that
        \[
        \{g\in G-\{1\} : \ind(g) < d\} \subseteq \Phi(G)
        \]
        is contained in the Frattini subgroup, then $D_{\Q,G}(s)$ has a pole at $s=1/d$ of order $\bar{b}_d(\Q,G)$.
        \item If $G=C_2\times A$ for $|A|$ odd, and $d\in \ind(G-\{1\})$ is such that
        \[
        \{g\in G-\{1\} : \ind(g) < d\} \subseteq \Phi(G) \cup G[2]
        \]
        then $D_{\Q,G}(s)$ has a pole at $s=1/d$ of order $\bar{b}_d(\Q,G)$.
        \item If $G=C_n$ is cyclic and $d=n-1$ is the index of a generator, then $D_{\Q,G}(s)$ has a pole at $s=1/d$ of order $\bar{b}_d(\Q,G)$.
    \end{enumerate}
\end{theorem}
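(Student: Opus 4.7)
The plan is to write $D_{\Q,G}(s)$ explicitly as a combination of Hecke $L$-functions of cyclotomic subfields and then read off the exact order of the pole at $s=1/d$. By conductor-discriminant and M\"obius inversion on the subgroup lattice of $G$, one writes $D_{\Q,G}(s)$ as a sum over subgroups $H \le G$ of generating series for homomorphisms $G_\Q \to H$, weighted by M\"obius coefficients. Each inner generating series factors over primes into an Euler product whose local factor at $p$ is a finite sum over local inertia. Grouping $\hat{G}\setminus\{1\}$ into cyclotomic Galois orbits and isolating the wild contribution as a bounded Euler product, the result rearranges into an expression of the shape
\[
D_{\Q,G}(s) = \sum_T c_T(s) \prod_{o} L(\ind(o)\, s,\, \psi_{o,T}),
\]
where $o$ runs over cyclotomic orbits of $G - \{1\}$, $\psi_{o,T}$ is a Hecke character of $\Q(\zeta_{|\langle g_o\rangle|})$ for a representative $g_o$ of $o$, and the $c_T(s)$ are holomorphic at $s = 1/d$.

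I would then read off the pole order at $s=1/d$ factor-by-factor. A factor $L(\ind(o)\, s, \psi_{o,T})$ has a simple pole at $s = 1/d$ iff $\ind(o) = d$ and $\psi_{o,T}$ is trivial; counting these yields the upper bound $b_d(\Q,G)$ of Theorem \ref{thm:generating_mero}. Factors with $\ind(o)<d$ are evaluated at $s = \ind(o)/d < 1$, where they may vanish. Via the Brauer decomposition $\zeta_{\Q(\zeta_m)}(s) = \prod_\psi L(s,\psi)$ and the orbit structure, the total vanishing order coming from factors attached to elements of a fixed order $m$ equals $[\Q(\zeta_m):\Q]^{-1}\cdot\mathrm{ord}_{s=\ind(g_o)/d}\, \zeta_{\Q(\zeta_m)}(s)$, matching exactly the correction defining $\bar b_d(\Q,G)$.

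It then remains in each case to confirm that no further cancellation arises in the leading Laurent coefficient. For (i), $d = a(G)$ means no orbits of smaller index exist, so the correction sum is empty and the pole has order $b_d = \bar b_d$; this already sits inside Wright's classical proof, via the usual lower bound of restricting to local conditions that force minimal-index inertia at each ramified prime. For (iv), $G = C_n$ and $d = n-1$: the unique cyclotomic orbit of generators together with the transparent Euler factorization for cyclic extensions force the pole order directly. For (ii), the Frattini hypothesis forces characters of $G$ trivial off $\Phi(G)$ to be unable to detect a surjection onto $G$, making spurious terms vanish and leaving only the $\ind(o) = d$ poles after the zeta correction. Case (iii) uses the product decomposition $G = C_2 \times A$ to separate extensions into a quadratic part and an $A$-extension, handling $G[2]$ via the quadratic factor and reducing the remainder to case (ii) applied to $A$.

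The hard part is this final non-vanishing step in cases (ii) and (iii): showing the leading Laurent coefficient at $s=1/d$ is nonzero, i.e., that an explicit linear combination of Hecke $L$-values at $s = \ind(o)/d$ for $\ind(o) < d$ does not accidentally cancel. The Frattini and $G[2]$ hypotheses are precisely what decouple low-index cyclotomic orbits from the generators of $G$ and rule out this collapse. Outside these hypotheses, low-index contributions can couple to generators in ways that produce cancellations beyond $\bar b_d$, which is why a fully general non-vanishing statement stays out of reach and the theorem is stated only in the four listed cases.
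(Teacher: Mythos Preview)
Your setup is right: the M\"obius sieve over subgroups $H\le G$ plus Euler-product factorization via cyclotomic orbits is exactly how the paper proceeds, and your treatment of (iv) matches the paper's (only $H=G$ contains elements of index $n-1$, so only that summand carries the pole).

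The gap is in (ii) and (iii), where your description of the non-vanishing step is too vague and, in (iii), goes in the wrong direction. You frame the obstacle as ruling out cancellation in a linear combination of Hecke $L$-values at points $\ind(o)/d<1$, and then say the Frattini and $G[2]$ hypotheses ``decouple'' things. But the paper's whole point is to \emph{avoid} ever confronting such a linear combination. The key observation you are missing is that the M\"obius function $\mu_G(H,G)$ on the subgroup lattice is supported on $H\supseteq\Phi(G)$. Under hypothesis (ii), every $g$ with $\ind(g)<d$ lies in $\Phi(G)$, hence in every $H$ appearing with nonzero M\"obius weight; therefore the zeta factors $\zeta_{\Q(\zeta_{|\langle g\rangle|})}(\ind(g)s)$ for these $g$ are common to all summands and can be \emph{factored out of the entire M\"obius sum}. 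What remains is a M\"obius sum of convergent Euler products with nonnegative local factors, and the paper proves a general inclusion--exclusion lemma (Lemma~\ref{lem:poset_nonvanishing}) showing that such a sum equals a series of nonnegative terms indexed by tuples $(J_p)$ with $\prod_p J_p=G$, which is then shown positive by exhibiting a single nonzero term. No special $L$-values in the critical strip survive in this residual expression.

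For (iii), you sketch a reduction to case (ii) applied to $A$ via the product decomposition $G=C_2\times A$. The paper does not do this. Instead it partitions the subgroup poset into $\{H:C_2\subseteq H\}$ and $\{H:C_2\not\subseteq H\}$, factors out what it can from each piece separately, and then uses the specific fact that $\zeta(a(G)/d)<0$ for $a(G)/d\in(0,1)$: the first piece comes with a $\zeta(a(G)/d)$ prefactor times a positive quantity (by Lemma~\ref{lem:poset_nonvanishing}), and the second piece is $-1$ times a positive quantity, so both are negative and cannot cancel. Your proposed reduction does not obviously yield this sign information, and without it you are back to an uncontrolled linear combination.
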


Theorem \ref{thm:nonvanishing}(i) is in fact proven by Wright \cite{wright1989} for all number fields $K$, as the pole at $s=1/a(G)$ corresponds to the main term of the asymptotic growth rate. We will give a new direct proof for this case when $K=\Q$, in line with the other cases. Theorem \ref{thm:nonvanishing} is proven by more ad hoc arguments, essentially by getting our hands dirty with alternating sums of convergent Euler products. We prove that these cases are ``nice enough" that we are able to treat or avoid the obstructions described in Section \ref{sec:examples_lower_order}.

Our discussion in Section \ref{sec:obstructions_lower_order} together with Theorem \ref{thm:nonvanishing} can be taken as evidence for the following conjecture extending Theorem \ref{thm:generating_mero}:

\begin{conjecture}\label{conj:Q_order_of_poles}
    Let $G$ be an abelian group and $D_{\Q,G}(s)$ the generating Dirichlet series for $G$-extensions of $\Q$ ordered by discriminant. Then $D_{\Q,G}(s)$ has a meromorphic continuation to the right halfplane ${\rm Re}(s) > \frac{1}{2a(G)}$ with poles at $s=\frac{1}{d}$ for each $d\in \ind(G-\{1\})$ of order \textbf{exactly}
    \[
        \max_{\Phi(G)\le H\le G}\bar{b}_{d/[G:H]}(\Q,H),
    \]
    where $\bar{b}_d(\Q,H)$ is defined as in Theorem \ref{thm:nonvanishing}.
\end{conjecture}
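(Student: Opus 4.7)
The plan is to decompose $D_{\Q,G}(s)$ by Möbius inversion on the subgroup lattice of $G/\Phi(G)$, analyze the pole at $s=1/d$ contributed by each resulting piece, and then prove nonvanishing of the dominant term. Restricting to subgroups $\Phi(G)\le H \le G$ is natural: by the defining property of the Frattini subgroup, a homomorphism $\phi\colon G_\Q\to G$ is surjective if and only if its reduction modulo $\Phi(G)$ is surjective. Möbius inversion thus yields an identity of the form $D_{\Q,G}(s)=\sum_{\Phi(G)\le H\le G} c_H \, Z_H(s)$, where each $Z_H(s)$ is a Dirichlet series built from homomorphisms $G_\Q\to H$ (with the discriminant ordering inherited from the embedding $H\hookrightarrow G$), and the Möbius coefficients $c_H$ come from the lattice of $G/\Phi(G)$.

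Next I would use the identity $\ind_G(g)=[G:H]\cdot\ind_H(g)$, valid for $g\in H$ with both groups acting regularly, to match the pole at $s=1/d$ in $D_{\Q,G}(s)$ with poles at $s=1/(d/[G:H])$ in the $H$-generating series. Applying Theorem \ref{thm:generating_mero} to $H$ gives an upper bound of $b_{d/[G:H]}(\Q,H)$ on the pole order from the $H$-piece, which is refined to $\bar{b}_{d/[G:H]}(\Q,H)$ by factoring out copies of Dedekind zeta functions of the cyclotomic subfields $\Q(\zeta_{|\langle g\rangle|})$ associated to lower-index elements $g$. These cyclotomic factors emerge naturally when the Euler product is grouped by Galois orbits of characters, and their orders at $s=\ind(g)/d$ account for contributions already captured by poles at smaller indices. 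Taking the maximum over $H$ then yields the claimed upper bound on the order of the pole at $s=1/d$, provided no unexpected cancellation occurs in the Möbius-weighted sum.

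The matching lower bound---nonvanishing of the leading Laurent coefficient after Möbius inversion---is the principal obstacle. In the special cases of Theorem \ref{thm:nonvanishing}, one can extract the leading term by hand because the structure is restrictive enough: either the lower-index elements are contained in $\Phi(G)$, so the signed sum collapses to a single dominant term, or a splitting off of a $C_2$-factor lets one pair Euler factors explicitly. In general one must verify that the Möbius coefficients do not conspire against the leading Laurent coefficients of the relevant products of Hecke $L$-functions at rational points in $(0,1)$. The obstructions discussed in Section \ref{sec:obstructions_lower_order} are exactly of this form, and a complete proof of the conjecture likely requires a new nonvanishing input---perhaps an averaged or moment-type result for Hecke $L$-functions at the relevant points---beyond the Euler-product manipulations used for Theorem \ref{thm:nonvanishing}.
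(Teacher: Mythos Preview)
The statement you are addressing is a \emph{conjecture}, and the paper does not prove it. The paper explicitly presents it as a prediction supported by the evidence of Theorem \ref{thm:nonvanishing} and the discussion in Section \ref{sec:obstructions_lower_order}, and says outright (at the start of Section \ref{sec:examples_lower_order}) that the decomposition
\[
D_{\Q,G}(s) = \sum_{H\le G} \mu_G(H,G)\, B_H(s)\prod_{g\in H-\{1\}} \zeta_{\Q(\zeta_{|\langle g\rangle|})}(\ind(g)s)^{1/[\Q(\zeta_{|\langle g\rangle|}):\Q]}
\]
yields a pole of order exactly $\bar{b}_{d/[G:H]}(\Q,H)$ in each summand, and that the conjecture ``is a prediction that none of these poles cancel out in the sum.'' So there is no proof in the paper for you to match against.

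Your outline tracks the paper's own reasoning closely: the M\"obius inversion over $\Phi(G)\le H\le G$, the index rescaling $\ind_G = [G:H]\cdot \ind_H$, and the identification of $\bar{b}_{d/[G:H]}(\Q,H)$ as the pole order of the $H$-summand are exactly what the paper establishes. One correction: the upper bound on the pole order is $\max_H \bar{b}_{d/[G:H]}(\Q,H)$ only if you already know the poles do not reinforce to higher order, which is automatic here since the summands have poles of varying orders with no negative-order contributions; but this maximum is not in general what Theorem \ref{thm:generating_mero} gives directly (that theorem gives $b_d(\Q,G)$), so your upper-bound step implicitly relies on the same decomposition.

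You correctly identify the genuine gap: the nonvanishing of the leading Laurent coefficient after the alternating M\"obius sum. The paper treats this as open in general, resolving it only in the special cases of Theorem \ref{thm:nonvanishing} via Lemma \ref{lem:poset_nonvanishing} and ad hoc positivity arguments. Your closing paragraph essentially restates the paper's own assessment of the obstruction. So your proposal is not a proof of the conjecture, but you appear to be aware of this; as a summary of the strategy and its limitations, it is accurate and aligned with the paper.
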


We remark that the index function of $H$ is related to the index function of $G$ by $\ind_H(h) = \frac{1}{[G:H]}\ind_G(h)$ for all $h\in H\le G$, which is where the $d/[G:H]$ factor comes from. Assuming $\zeta_{\Q(\zeta_m)}(s)$ has no rational roots of size $>1/2$, it follows the definitions that
\[
\max_{\Phi(G)\le H\le G}\bar{b}_{d/[G:H]}(\Q,H) = \max_{\Phi(G)\le H\le G}b_{d/[G:H]}(\Q,H) = b_d(\Q,G)
\]
agrees with the maximum value given in Theorem \ref{thm:generating_mero}. In principle, a number of individual Dedekind zeta function $\zeta_K(s)$ with $K$ of small discriminant are known to not have nontrivial rational zeros, but this is not known in general.

If there exists a rational counter example to the Riemann Hypothesis for $\Q(\zeta_m)$ for some $m$, it is possible for smaller $H\le G$ to determine this maximum. For an example of how this could happen, consider $C_{15}=\langle \sigma\rangle$ and $d=\ind(\sigma^3) = 12$. We can compute
\begin{align*}
    \bar{b}_{12}(\Q,C_{15}) &= \max\left\{1 - \underset{s=5/6}{\rm ord}\left(\zeta_{\Q(\zeta_{3})}(s)\right),0\right\}\\
    \bar{b}_{12/[C_{15}:C_5]}(\Q,C_{5}) &= 1.
\end{align*}
If $\zeta_{\Q(\zeta_3)}(s)$ has a zero at $s=5/6$, then $H=C_5$ gives the primary contribution to the pole at $s=1/12$. In the proof, this corresponds to a lower term in the sieve to $G$-extensions from $G$-\'etale algebras giving the order of the pole. This phenomenon occurs whenever $H$ contains all the elements in $G$ of index $d$, but does not contain all the elements of index smaller than $d$.

Theorem \ref{thm:nonvanishing}(ii,iii) can be used to deduce some cases of this conjecture. The following corollary is immediate from the structure of Theorem \ref{thm:nonvanishing} and the fact that $\zeta(s)$ has no nontrivial real zeros:

\begin{corollary}
    Conjecture \ref{conj:Q_order_of_poles} is true for $G=(C_{p^k})^n$ for any prime $p$ or $G=C_2\times (C_{p^k})^n$ for any prime $p>2$. Moreover, in each of these cases,
    \begin{align*}
        \max_{\Phi(G)\le H\le G}\bar{b}_{d/[G:H]}(\Q,H) = \bar{b}_d(\Q,G)
    \end{align*}
    for each $d \in \ind(G-\{1\})$.
\end{corollary}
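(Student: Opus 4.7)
The plan is to combine the pole-order identifications in Theorem \ref{thm:nonvanishing}(i)--(iii) with a short comparison showing that the maximum in Conjecture \ref{conj:Q_order_of_poles} is attained at $H=G$. For each $d\in \ind(G-\{1\})$, one identifies which part of Theorem \ref{thm:nonvanishing} pins down the pole of $D_{\Q,G}(s)$ at $s=1/d$ as $\bar{b}_d(\Q,G)$, and then verifies $\bar{b}_{d/[G:H]}(\Q,H)\le \bar{b}_d(\Q,G)$ for every proper $\Phi(G)\le H< G$.

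For $G=(C_{p^k})^n$ the first step is immediate: $\Phi(G)=pG$ coincides with the $p^{k-1}$-torsion of $G$, and since the index function is monotone in $|\langle g\rangle|$ for an abelian $p$-group, the nonidentity elements of index strictly smaller than $\ind(g_0)$ (with $g_0$ of order $p^j$) form the nontrivial $p^{j-1}$-torsion, which is contained in $\Phi(G)$ whenever $j\ge 2$. The case $j=1$ reduces to $d=a(G)$ covered by (i); the case $j\ge 2$ is handled by (ii). For $G=C_2\times (C_{p^k})^n$ with $p>2$, one has $\Phi(G)=1\times p(C_{p^k})^n$ and $G[2]=C_2\times 1$, and a case analysis on the order of $g_0$ reduces most instances to (i) or (iii). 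The delicate sub-case is when $g_0$ has mixed order $2p^j$ with $j\ge 2$, where elements of order $2p^i$ for $0<i<j$ have strictly smaller index but can lie outside $\Phi(G)\cup G[2]$. I expect to handle this case by using the coprimality $\gcd(2,p^k)=1$ together with conductor--discriminant multiplicativity to factor $D_{\Q,G}(s)$ as the product $D_{\Q,C_2}(p^{kn}s)\cdot D_{\Q,(C_{p^k})^n}(2s)$ in the coprime regime plus a controlled correction from intersecting ramification; the pole contributions from the two coprime factors are then pinned down by the first part, while the correction should supply the remaining mixed-order poles with the predicted orders.

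For the comparison step, the crucial input is that $\zeta(s)$ has no nontrivial real zeros, extended to the small Dedekind zeta functions of the cyclotomic fields $\Q(\zeta_{p^i})$ that appear in the sum defining $\bar{b}$ (which factor as products of Dirichlet $L$-functions at rational arguments in $(0,1)$). All subtracted ord-terms are then zero, so $\bar{b}=b$ throughout. The remaining inequality $b_{d/[G:H]}(\Q,H)\le b_d(\Q,G)$ follows from the inclusion $\{h\in H:\ind_G(h)=d\}\hookrightarrow\{g\in G:\ind_G(g)=d\}$, since the cyclotomic action on any element depends only on its order and not on the ambient group. Combined with the upper bound $b_d(\Q,G)$ from Theorem \ref{thm:generating_mero}, the maximum in Conjecture \ref{conj:Q_order_of_poles} is attained at $H=G$ and equals $\bar{b}_d(\Q,G)$. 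The main obstacle, as indicated, is executing the coprime-factorization argument in the second group cleanly enough to cover the mixed-order cases uniformly; everything else is bookkeeping on top of Theorem \ref{thm:nonvanishing}.
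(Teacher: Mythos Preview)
Your treatment of $G=(C_{p^k})^n$ is correct and is exactly what the paper intends: every $d\in\ind(G-\{1\})$ satisfies the hypothesis of Theorem~\ref{thm:nonvanishing}(ii), since $\{g:\ind(g)<d\}$ consists of elements of order $\le p^{k-1}$, all of which lie in $\Phi(G)=pG$. For the ``Moreover'' clause in this case there is in fact a cleaner argument than the one you give: because $\{g:\ind_G(g)<d\}\subseteq\Phi(G)\subseteq H$ for every $\Phi(G)\le H\le G$, the subtracted sum in the definition of $\bar b$ is the \emph{same} for $H$ and for $G$, and then $\bar b_{d/[G:H]}(\Q,H)\le\bar b_d(\Q,G)$ reduces to $b_{d/[G:H]}(\Q,H)\le b_d(\Q,G)$. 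This avoids any appeal to nonvanishing of $\zeta_{\Q(\zeta_{p^i})}$ at rational points in $(1/2,1)$, which is \emph{not} known unconditionally; your claim that ``all subtracted ord-terms are then zero'' assumes more than the paper's input (``$\zeta(s)$ has no nontrivial real zeros'') provides.

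For $G=C_2\times(C_{p^k})^n$ your plan has a genuine gap, and it is larger than you describe. The hypothesis of Theorem~\ref{thm:nonvanishing}(iii) fails not only for $g_0$ of order $2p^j$ with $j\ge 2$, but also for $g_0$ of order $p^j$ with $j\ge 2$ (the same elements of order $2p^i$, $1\le i<j$, have smaller index and lie outside $\Phi(G)\cup G[2]$), and for $g_0$ of order $2p$ when $k=1$ (elements of order $p$ are then not in $\Phi(G)=\{1\}$). More seriously, your proposed coprime-factorization fix cannot work as stated: even with $\gcd(2,|A|)=1$, the index in the regular representation satisfies $\ind_G(g_1,g_2)=|G|(1-1/(|g_1||g_2|))$, which is not additive in the two coordinates, so the discriminant of $(f_1,f_2)$ does not factor and there is no product decomposition $D_{\Q,G}(s)=D_{\Q,C_2}(\alpha s)\cdot D_{\Q,A}(\beta s)$ up to a manageable correction. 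The paper gives no detailed argument here either --- it calls the corollary ``immediate from the structure of Theorem~\ref{thm:nonvanishing}'' --- but the intended reading is almost certainly to rerun the \emph{proof method} of (iii) (splitting the M\"obius sum by whether $C_2\subseteq H$, factoring out common zeta factors, applying Lemma~\ref{lem:poset_nonvanishing} to each piece, and using $\zeta(x)<0$ on $(0,1)$) rather than to appeal only to the statement. Carrying this out for general $d$ when $k\ge 2$ is not literally covered by (iii) and requires more bookkeeping than ``immediate'' suggests; your instinct that something extra is needed is right, but the fix is to adapt the sign argument in the proof of (iii), not to factor the generating series.
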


A similar question can be asked for other number fields $K$, or in the twisted setting that we describe in the next subsection. However, these settings come with more delicate obstructions involving linear combinations of special values of $L$-functions. We do not currently have the evidence to justify extending the conjecture to such cases.

In order to turn the poles in Theorem \ref{thm:nonvanishing} into lower order terms, we need to compare these to the bounds for $\theta(K,G)$ in the error term. Corollary \ref{cor:GLH} immediately implies the following:
\begin{corollary}\label{cor:GRH_lower_order}
    Assume GRH. Then $\#\Surj(G_\Q,G;X)$ admits a lower order term if $G$ satisfies at least one of the following:
    \begin{enumerate}
        \item[(a)] $G$ is not elementary abelian, i.e. $\Phi(G) \ne 1$,
        \item[(b)] $G = C_2 \times A$ for a nontrivial abelian group $A$ of odd order, or
        \item[(c)] $G = C_n$ for some composite $n>1$.
    \end{enumerate}
    Moreover, if $G=(C_{p^k})^n$ for some prime $p$ or $G=C_2\times (C_{p^k})^n$ for some prime $p>2$ then $\#\Surj(G_{\Q},G;X)$ admits an asymptotic term of size
    \[
    X^{\frac{1}{\ind(g)}}P_{\Q,G,1/\ind(g)}(\log X)
    \]
    for each $g\in G-\{1\}$ for a polynomial $P_{\Q,G,1/\ind(g)}$ of degree exactly $b_{\ind(g)}(\Q,G)-1$.
\end{corollary}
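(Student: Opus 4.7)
The plan is to combine Corollary \ref{cor:GLH} with Theorem \ref{thm:nonvanishing} (and the corollary to Conjecture \ref{conj:Q_order_of_poles} stated immediately above). Under GRH, Corollary \ref{cor:GLH} gives $\theta(\Q,G) \le \frac{1}{2a(G)}$, and since $\ind(g) < 2a(G)$ for every nonidentity $g \in G$, the asymptotic expansion displayed just after Definition \ref{def:theta} takes the form
\[
\#\Surj(G_\Q,G;X) = \sum_{d \in \ind(G - \{1\})} X^{1/d}\, P_{\Q,G,1/d}(\log X) + O\bigl(X^{1/(2a(G)) + \epsilon}\bigr),
\]
in which every term strictly dominates the error. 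The corollary therefore reduces to producing poles of $D_{\Q,G}(s)$ of positive order at the appropriate points $s = 1/d$.

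Under GRH, $\zeta_{\Q(\zeta_m)}(s)$ has no real zeros in $(0,1)$ for any $m$, so each order of vanishing subtracted in the defining sum for $\bar{b}_d(\Q,G)$ in Theorem \ref{thm:nonvanishing} is zero (since $\ind(g)/d \in (0,1)$ whenever $\ind(g) < d$). Consequently $\bar{b}_d(\Q,G) = b_d(\Q,G) \ge 1$ for every $d \in \ind(G-\{1\})$, and each applicable part of Theorem \ref{thm:nonvanishing} immediately furnishes a pole of positive order. For case (c), $G = C_n$ with $n$ composite, I apply Theorem \ref{thm:nonvanishing}(iv) at $d = n - 1$, which strictly exceeds $a(C_n) = n(\ell-1)/\ell$ for the smallest prime divisor $\ell$ of $n$. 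For case (b), $G = C_2 \times A$ with $|A|$ odd and nontrivial, Theorem \ref{thm:nonvanishing}(iii) applies at $d = 2|A|(\ell - 1)/\ell$ with $\ell$ the smallest prime dividing $|A|$; the only element of strictly smaller index is $(1, 0) \in G[2]$, so the hypothesis holds. For case (a), $\Phi(G) \ne 1$, I apply Theorem \ref{thm:nonvanishing}(ii) at $d$ equal to the minimum index among elements of $G \setminus \Phi(G)$, which by construction satisfies the Frattini-containment hypothesis.

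For the moreover clause, I invoke the corollary to Conjecture \ref{conj:Q_order_of_poles} cited earlier, which proves the conjecture in full for $G = (C_{p^k})^n$ and $G = C_2 \times (C_{p^k})^n$ (odd $p$). In these groups the hypothesis of Theorem \ref{thm:nonvanishing}(ii) or (iii) holds at every $d \in \ind(G - \{1\})$, and the GRH input gives $\bar{b}_d = b_d$, so the pole at $s = 1/d$ has order exactly $b_d(\Q,G)$, yielding $P_{\Q,G,1/d}$ of degree exactly $b_d(\Q,G) - 1$. The main obstacle I anticipate lies in case (a) for groups of mixed Sylow structure such as $C_2 \times C_4$: here the minimum index among elements of $G \setminus \Phi(G)$ coincides with $a(G)$, so Theorem \ref{thm:nonvanishing}(ii) yields only the main-term pole while parts (iii) and (iv) do not apply. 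To handle this subcase I would pass to an intermediate subgroup $\Phi(G) \le H \le G$ of a form already covered by the conjecture-corollary (for example $H = \langle (1,1) \rangle \cong C_4$ inside $C_2 \times C_4$) and propagate the pole of $D_{\Q,H}(s)$ to one of $D_{\Q,G}(s)$ via the subgroup-lattice relation underlying Conjecture \ref{conj:Q_order_of_poles}.
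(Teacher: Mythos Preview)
Your approach is exactly the one the paper intends: combine the error bound $\theta(\Q,G)\le \tfrac{1}{2a(G)}$ from Corollary~\ref{cor:GLH} with the non-vanishing statements of Theorem~\ref{thm:nonvanishing}, using GRH to force $\bar b_d=b_d$. Your treatments of cases (b) and (c) and of the ``moreover'' clause are correct and match the paper's reasoning (the paper gives no argument beyond the word ``immediately'').

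Your worry about case~(a) is legitimate and is a genuine gap. Taking $d=\min\{\ind(g):g\in G\setminus\Phi(G)\}$ does satisfy the Frattini-containment hypothesis of Theorem~\ref{thm:nonvanishing}(ii), but it can happen that this $d$ equals $a(G)$, in which case you only recover the main-term pole. This occurs precisely when some element of minimal index lies outside $\Phi(G)$, i.e.\ when the $\ell$-Sylow subgroup of $G$ (for $\ell$ the smallest prime divisor of $|G|$) has a $C_\ell$ direct factor; your example $G=C_2\times C_4$ is exactly of this type, and neither (ii), (iii), nor (iv) of Theorem~\ref{thm:nonvanishing} produces a pole at $s=1/6$. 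The paper's claim that (a) follows ``immediately'' glosses over this, so you have not missed an argument that is actually present.

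Your proposed fix, however, is not a proof. Passing to a subgroup $H$ such as $\langle(1,1)\rangle\cong C_4$ and ``propagating'' the pole of $D_{\Q,H}(s)$ to $D_{\Q,G}(s)$ via the M\"obius sieve is precisely the non-cancellation assertion of Conjecture~\ref{conj:Q_order_of_poles}, which is unproven in this generality; the inclusion--exclusion over $\Phi(G)\le H\le G$ can in principle cancel such a pole, and Lemma~\ref{lem:poset_nonvanishing} does not apply because the zeta factors no longer match across all terms of the sieve. So for groups like $C_2\times C_4$ the statement of part~(a) is, as far as the tools in this paper go, open.
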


\begin{remark}
    The lower order term referred to in Corollary \ref{cor:GRH_lower_order}(c) of order $X^{\frac{1}{n-1}}$ may not be the \emph{secondary term} specifically, while Corollary \ref{cor:GRH_lower_order}(a,b) does refer to a specific secondary (and possibly tertiary and so forth) term. This is because $n-1$ is the largest index of an element in $C_n$, as opposed to the ``next smallest". If $n$ is not the square of a prime, there are indices $\frac{1}{n-1} < \frac{1}{\ind(g)} < \frac{1}{a(C_n)}$ which are potentially associated to poles of $D_{\Q,C_n}(s)$ by Theorem \ref{thm:generating_mero}. However, stronger arguments are needed to prove that these poles do not cancel.
\end{remark}

Using the bounds in Corollary \ref{cor:uncond} instead is much more involved. Using MAGMA \cite{MAGMA}, we are able to collect same data for cyclic groups.

\begin{data}\label{data}
    We consider $\#\Surj(G_\Q,C_n;X)$ for composite $n < 20{,}000$.
    \begin{enumerate}[(i)]
        \item Corollary \ref{cor:uncond} reveals a potential lower order term for $\approx 48.5\%$ of composite $n<20{,}000$. That is, a lower order term is revealed if $s=1/d$ is a pole of $D_{\Q,C_n}(s)$ for $d$ the second smallest index of $C_n$.
        \item Corollary \ref{cor:uncond} and Theorem \ref{thm:nonvanishing} reveal a lower order for $\approx 39.4\%$ of composite $n<20{,}000$.
    \end{enumerate}
\end{data}

We prove this directly in some uncomplicated cases, primarily to show that $\#\Surj(G_\Q,G;X)$ admits an unconditional lower order term for infinitely many abelian groups $G$.
\begin{corollary}\label{cor:C4andC6}
    Let $M$ be a positive integer which is coprime to $6$. Then $\#\Surj(\Q,C_{4M};X)$ and $\#\Surj(\Q,C_{6M};X)$ admit an unconditional secondary term of size $X^{1/3M}$ and $X^{1/4M}$ respectively.
\end{corollary}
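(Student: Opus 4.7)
The plan is to verify, for each of $G=C_{4M}$ and $G=C_{6M}$ with $\gcd(M,6)=1$, that the generating series $D_{\Q,G}(s)$ has a pole of positive order at the claimed secondary location and that $\theta(\Q,G)$ is strictly below it, so the residue appears as a genuine secondary term of the asymptotic.

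For the pole I would apply Theorem \ref{thm:nonvanishing}. In $C_{4M}$, any divisor $d$ of $4M$ with $d>M$ satisfies $4M/d\in\{1,2\}$ since $3\nmid 4M$, so the unique non-identity element of index less than $3M$ is the involution $\sigma^{2M}$. Since $2M$ is divisible by $2\,\operatorname{rad}(M)$, we have $\sigma^{2M}\in\Phi(C_{4M})$, so Theorem \ref{thm:nonvanishing}(ii) applies at $d=3M$ and gives a pole of order $\bar b_{3M}(\Q,C_{4M}) = 1-\operatorname{ord}_{s=2/3}\zeta(s) = 1$, using non-vanishing of $\zeta$ on $(0,1)$. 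In $C_{6M}=C_2\times C_{3M}$ (valid because $3M$ is odd), the unique non-identity element of index less than $4M$ is $\sigma^{3M}\in G[2]$, so Theorem \ref{thm:nonvanishing}(iii) at $d=4M$ gives a pole of order $\bar b_{4M}(\Q,C_{6M}) = 1-\operatorname{ord}_{s=3/4}\zeta(s) = 1$.

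For the error in the $C_{6M}$ case I would apply Corollary \ref{cor:uncond} with $D$ the third smallest index of $C_{6M}$; when the smallest prime of $M$ is at least $7$ this is $D=5M$ (from order-$6$ elements), the contributing elements are the involution (index $3M$) and the two order-$3$ elements (index $4M$), and a direct substitution gives
\[
\theta(\Q,C_{6M}) \;\le\; \frac{1}{3M} - \frac{2/(15M)}{19/15} \;=\; \frac{13}{57M} \;<\; \frac{1}{4M},
\]
with an analogous calculation at $D=24M/5$ in the case $5\mid M$ yielding a bound still strictly below $1/(4M)$. For the error in the $C_{4M}$ case, Corollary \ref{cor:uncond} already suffices when $5\nmid M$ (with $D=4M(p-1)/p$, $p=p_1(M)\ge 7$), but narrowly misses when $5\mid M$. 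To cover all $M$ uniformly I would exploit the isomorphism $C_{4M}\cong C_4\times C_M$: surjections $G_\Q\twoheadrightarrow C_{4M}$ biject with pairs of surjections to $C_4$ and $C_M$, and by conductor-discriminant $|\disc(L_4 L_M)|=|\disc(L_4)|^M|\disc(L_M)|^4$ whenever $L_4,L_M$ have coprime discriminants. Since $L_M$ is unramified at $2$ and at $3$ (as $\gcd(M,6)=1$), any shared ramification is tame and lies above primes $p\ge 5$, so a short sieve over these primes expresses $D_{\Q,C_{4M}}(s)$ in terms of $D_{\Q,C_4}(Ms)\cdot D_{\Q,C_M}(4s)$ and local correction factors holomorphic on $\operatorname{Re}(s)>1/(4M)$. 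The main pole of $D_{\Q,C_M}(4s)$ lies at $s=1/(4a(C_M))\le 5/(16M)<1/(3M)$, so the pole at $s=1/(3M)$ is inherited entirely from the secondary pole of $D_{\Q,C_4}(Ms)$, and the Cohen--Diaz-y-Diaz--Olivier power saving $\theta(\Q,C_4)<1/3$ transfers through $s\mapsto Ms$ to give $\theta(\Q,C_{4M})<1/(3M)$.

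The principal obstacle is making the factorization-plus-sieve rigorous for $C_{4M}$ when $5\mid M$: one must carry out a prime-by-prime Euler-factor analysis at the shared tame primes $p\ge 5$ to verify that no new poles are introduced at or above $s=1/(3M)$ and that the inherited error bound $\theta(\Q,C_4)/M$ survives the correction terms. The $C_{6M}$ case and the pole-existence step in both cases reduce directly to applications of the theorems established earlier in the paper.
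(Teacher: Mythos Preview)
Your treatment of the poles via Theorem~\ref{thm:nonvanishing}(ii),(iii) and the entire $C_{6M}$ error estimate (including the two cases $5\nmid M$ with $D=5M$ and $5\mid M$ with $D=24M/5$) match the paper's proof exactly.

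For $C_{4M}$ the paper also proceeds by a direct application of Corollary~\ref{cor:uncond} with $D=4M(\ell-1)/\ell$ for $\ell$ the least prime of $M$, obtaining $\theta(\Q,C_{4M})\le \frac{5\ell-6}{8M(2\ell-3)}$ and then claiming this is below $1/(3M)$ for all $\ell\ge 5$. You are right that this fails at $\ell=5$: the bound is $19/(56M)>1/(3M)$, and in fact no choice of $D$ in Corollary~\ref{cor:uncond} does better for $C_{20}$. The paper's inequality check contains an arithmetic slip (it writes the threshold as $3/(32\ell-48)<11/16$, whereas the correct comparison $\tfrac{5}{16}+\tfrac{3}{32\ell-48}<\tfrac13$ yields $3/(32\ell-48)<1/48$, i.e.\ $\ell\ge 7$). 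So you have genuinely caught a gap in the printed argument for the case $5\mid M$, and the paper's proof, taken literally, is incomplete there as well.

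Your proposed repair via $C_{4M}\cong C_4\times C_M$ and a discriminant-product factorization is a different route from the paper's, and as you acknowledge it is not yet a proof: the Euler-factor corrections at primes $p\ge 5$ ramified in both $L_4$ and $L_M$ have to be shown to be holomorphic and polynomially bounded on $\operatorname{Re}(s)>1/(3M)$, and one must check that the known bound $\theta(\Q,C_4)<1/3$ (which the paper establishes, e.g.\ by Corollary~\ref{cor:uncond} with $D=4$) transfers through the sieve. A more direct patch, and closer in spirit to the paper, would be to invoke Theorem~\ref{thm:best_bound} with a sharper $t$-aspect subconvexity exponent for Dirichlet $L$-functions than S\"ohne's $1/6$ (any $\mu(1/2)<3/19$ closes the gap at $\ell=5$); this avoids the product machinery entirely.
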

We remark that these account for $\approx 11.1\%$ of cyclic groups.

\subsection{Power Saving Bounds for Counting Twisted Abelian Extensions}
We also prove analogous power savings in the twisted setting developed by the author in \cite{alberts2021}. The twisted counting problem can be presented in two equivalent ways: Let $K$ be a number field, $G\subseteq S_n$ a transitive subgroup, and $T\normal G$ a normal subgroup.
\begin{enumerate}[(1)]
    \item Given any $G$-extension $L/K$, we can consider the counting function
    \[
    \#\{F/K : [F:K]=n,\ \Gal(\widetilde{F}/K)\cong G,\ \widetilde{F}^T = L^T,\ |\disc(F/K)|\le X\}.
    \]
    \item Alternatively, given a continuous homomorphism $\pi:G_K\to G$ we can consider the counting function
    \[
    \#\Surj(G_K,T,\pi;X) := \#\{f\in Z^1(K,T(\pi)) : f*\pi:G_K\to G\text{ is surjective},\ |\disc(f*\pi)|\le X\},
    \]
    where $T(\pi)$ is the group $T$ with the Galois action given by $x.t = \pi(x)t\pi(x)^{-1}$ and $(f*\pi)(x) = f(x)\pi(x)$.
\end{enumerate}
Up to a constant multiple, these two questions are equivalent if $L$ is the field fixed by $\ker \pi$. This is called a ``twisted" counting problem due to the nontrivial Galois action on $T$.

\begin{remark}
    The set $\Surj(G_K,T,\pi;X)$ depends on the choice of homomorphism $\pi:G_K\to G$, however the cardinality of this set $\#\Surj(G_K,T,\pi;X)$ depends only on the Galois action on $T(\pi)$. Different homomorphisms $\pi$ can produce the same Galois action, however the correspondence with the first perspective implies that these different choices do not change the size of the set. For this reason, we can write the asymptotic growth invariants as depending only on $K$ and $T(\pi)$.
\end{remark}

The author introduced this counting problem to formulate a natural generalization of Malle's conjecture to a twisted setting, which predicts that
\[
\#\Surj(G_K,T,\pi;X)\sim c(K,T(\pi)) X^{1/a(T)} (\log X)^{B(K,T(\pi)) - 1},
\]
where $c(K,T(\pi))>0$, $a(T) = \min_{t\in T-\{1\}} \ind(t)$, and $B(K,T(\pi)) \ge b(K,T(\pi))$ generalizes T\"urkelli's modified invariant. Notice that when $T=G$ this specializes to Malle's original conjecture (with T\"urkelli's modified invariant). For our purposes we will only consider the case that $T$ is abelian, for which
\[
B(K,T(\pi)) = b(K,T(\pi)) = \#\{\text{orbits }o\text{ of }T\text{ under }\pi*\chi^{-1}\text{ with }\ind(o)=a(T)\}
\]
where $\chi:G_K\to \hat{\Z}^{\times}$ is the cyclotomic character. The same asymptotic is expected to be true for
\[
\#H^1(K,T(\pi);X):=\frac{1}{|B^1(K,T(\pi))|}\#\{f\in Z^1(K,T(\pi)) : |\disc(f*\pi)|\le X\}
\]
as well, with a possibly different leading coefficient.

The author together with O'Dorney \cite{alberts-odorney2021} proved the twisted Malle's conjecture for any abelian $T$, giving the main asymptotic term for $\#\Surj(G_K,T,\pi;X)$. This proof uses Poisson summation associated to the Galois cohomology group $H^1(K,T(\pi))$ to express the generating Dirichlet series as a finite sum of Euler products. The expression of the generating series as a finite sum of Euler products is much like the argument used by Wright for counting abelian extensions \cite{wright1989}, where the use of Poisson summation in this context originates in \cite{frei-loughran-newton2018}. We leverage this decomposition together with a deeper study of the Tate pairing to prove the following generalization of Theorem \ref{thm:generating_mero}:

\begin{theorem}\label{thm:generating_mero_twist}
    Let $K$ be a number field, $G\subseteq S_n$ a transitive subgroup, $T\normal G$ abelian, and $\pi:G_K\to G$ a continuous homomorphism. The generating Dirichlet series for $f\in Z^1(K,T(\pi))$ for which $f*\pi$ is surjective ordered by discriminant is meromorphic on the right halfplane ${\rm Re}(s) > 0$.

    Any poles with real part larger than $\frac{1}{2a(T)}$ occur at $s=\frac{1}{d}$ for each $d\in \ind(T-\{1\})$  of order \textbf{at most} $b_d(K,T(\pi))$, where
    \[
    b_d(K,T(\pi)) = \#\{\text{orbits }o\text{ of }T\text{ under }\pi*\chi^{-1}\text{ such that }\ind(o)=d\}.
    \]
    Here, $\chi:G_K\to \hat{Z}^\times$ is the cyclotomic character, and $\pi*\chi^{-1}$ induces a Galois action on $T$ by $x.t = (\pi(x)t\pi(x)^{-1})^{\chi(g)^{-1}}$.
\end{theorem}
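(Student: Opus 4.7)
The plan is to combine the Poisson-summation formula of Alberts--O'Dorney with Tate local duality, express $D_{K,T,\pi}(s)$ as a finite sum of Euler products of Hecke $L$-functions, and then read off the location and order of the poles. In \cite{alberts-odorney2021}, local Poisson summation on each $H^1(K_v,T(\pi))$ rewrites the generating series as
\[
D_{K,T,\pi}(s) = \sum_{\psi} c_\psi \prod_v E_{v,\psi}(s),
\]
where $\psi$ runs over characters of a finite group dual to $H^1(K,T(\pi))$ and $E_{v,\psi}$ is a local factor depending on the restriction of $\pi$ to $G_{K_v}$ and on $\psi$. The twist by $\chi^{-1}$ appearing in the statement enters precisely through Tate local duality, which identifies the Pontryagin dual of $T(\pi)$ with $\hat{T}(\pi*\chi^{-1})$; thus the characters $\psi$ carry the Galois action $\pi*\chi^{-1}$, and their orbits on $T$ match those in the definition of $b_d(K,T(\pi))$.

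Next I would analyze each Euler product $\prod_v E_{v,\psi}(s)$ individually. At an unramified prime, a direct computation expands the local factor into a product of local Euler factors of Hecke $L$-functions, one per orbit of $T$ under $\pi*\chi^{-1}$, each weighted in the $s$-variable by the corresponding index $d_o = \ind(o)$. Collecting these globally yields a factorization
\[
\prod_v E_{v,\psi}(s) = \left(\prod_{o} L(d_o s, \Psi_o)\right) R_\psi(s),
\]
where $\Psi_o$ is the Hecke character attached to the orbit $o$ and $R_\psi(s)$ absorbs the ramified and higher-order contributions. A standard comparison with $\zeta_K(2a(T)s)$ shows that $R_\psi$ converges absolutely on ${\rm Re}(s) > 1/(2a(T))$. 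Since each $L(d_o s, \Psi_o)$ is meromorphic on the complex plane with at most a simple pole at $s=1/d_o$ (and only when $\Psi_o$ is trivial), the product has meromorphic continuation to ${\rm Re}(s) > 0$, and any pole in the region ${\rm Re}(s) > 1/(2a(T))$ must lie at $s=1/d$ for some $d \in \ind(T-\{1\})$, with order at most the number of orbits of index exactly $d$ whose Hecke character is trivial. Summed over $\psi$, this bound is $b_d(K,T(\pi))$.

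The main obstacle I expect is the uniform convergence and analyticity of the correction factors $R_\psi(s)$ on ${\rm Re}(s) > 1/(2a(T))$. In the untwisted setting of Theorem \ref{thm:generating_mero}, Wright's argument suffices because wild local factors are essentially polynomials in $p^{-ds}$ with $d \ge a(G)$ and present no difficulty. In the twisted setting the local factor structure is distorted by the $\pi$-action, and one must use the local Tate pairing and the explicit description of $H^1(K_v, T(\pi))$ to verify that no spurious local contribution produces a pole or zero in the region of interest --- this is the ``deeper study of the Tate pairing'' alluded to in the introduction. Finally, the surjectivity constraint on $f*\pi$ is imposed by inclusion-exclusion over intermediate subgroups $\pi(G_K) \le H \le G$; this produces only finitely many series of the same shape and therefore preserves the pole analysis.
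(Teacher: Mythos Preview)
Your overall strategy matches the paper's: Poisson summation writes the generating series as a finite sum of Euler products $\widehat{w_s}(h)$ indexed by $h\in H^1_{ur^\perp}(K,T(\pi)^*)$, each of which factors (up to a holomorphic correction) as a product of Hecke $L$-functions in the variables $\ind(o)s$, and inclusion--exclusion over subgroups handles the surjectivity constraint. The pole count you give is also correct in spirit, though note that the bound $b_d(K,T(\pi))$ already holds for \emph{each} $\psi$ individually, since a finite sum of meromorphic functions has pole order at most the maximum (not the sum) of the summands' pole orders.

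There are two genuine gaps. First, your factorization $\prod_o L(d_o s,\Psi_o)\cdot R_\psi(s)$ with $R_\psi$ absolutely convergent on ${\rm Re}(s)>1/(2a(T))$ only yields meromorphic continuation to ${\rm Re}(s)>1/(2a(T))$, not to ${\rm Re}(s)>0$ as the theorem asserts. The $L$-factors are entire, but $R_\psi$ is a priori only holomorphic where its Euler product converges. The paper closes this gap by first showing (Theorem~\ref{thm:euler_factors}) that every tame Euler factor of $\widehat{w_s}(h)$ has the shape $\sum_o {\rm tr}(\rho_h|_{\C o}(\Fr_p))p^{-\ind(o)s}$ for a fixed Galois representation $\rho_h$, and then invoking Moroz's theorem \cite{moroz_1988}, which gives meromorphic continuation to ${\rm Re}(s)>0$ for any Euler product whose factors are of the form $1+\sum_j \chi_j(\Fr_p)p^{-a_j s}$. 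Your outline never invokes such a result.

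Second, the step you label ``a direct computation'' is precisely the main technical content of the paper, and you have misdiagnosed the obstacle. The issue is not that wild local factors might introduce stray poles or zeros (they are polynomials in $p^{-s}$ and harmless); the issue is whether the tame factorization involves \emph{integer} powers of honest Hecke $L$-functions rather than fractional powers, which would force branch cuts and destroy meromorphicity. Establishing this requires building, for each $h$, a genuine twisted permutation representation $\rho_h:G_K\to{\rm GL}(\C[\Hom((\Q/\Z)^*,T(\pi))])$ whose trace at $\Fr_p$ recovers the local Tate pairing sum. The paper does this by constructing a ``parametrizing local pairing'' (Theorem~\ref{thm:parametrizing}) that unifies all the local Tate pairings at tame places via the cup product on the Hochschild--Serre spectral sequence, and then checking (Lemma~\ref{lem:twisted_bilinear}) that the resulting $\rho_h$ is a homomorphism. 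Only then does $L(\ind(o)s,\rho_h|_{\C o})$ become an Artin $L$-function induced from a single Hecke character, and the factorization in Corollary~\ref{cor:meromorphic_continuation} becomes legitimate. Your sketch assumes this structure without supplying it.
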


Once again, each pole is expected to contribute a term to the asymptotic growth rate. We can define the following invariant generalizing $\theta(K,G)$:

\begin{definition}\label{def:theta_twist}
    Let $K$ be a number field, $G\subseteq S_n$ a transitive group, $T\normal G$, $\pi:G_K\to G$ a continuous homomorphism, and $D_{K,T(\pi)}(s)$ the generating series for the set of crossed homomorphisms $f\in Z^1(K,T(\pi))$ for which $f*\pi$ is surjective ordered by discriminant. Define $\theta(K,T(\pi))$ to be the infimum of all real numbers $\theta$ for which
    \begin{enumerate}[(a)]
        \item $D_{K,T(\pi)}(s)$ has a meromorphic continuation to ${\rm Re}(s) > \theta$ with at most finitely many poles $s_1,s_2,...,s_r$, and
        \item The counting function satisfies the bound
        \[
            \#\Surj(G_K,T,\pi;X) - \sum_{j=1}^r \underset{s=s_j}{\rm Res}\left(D_{K,T(\pi)}(s)\frac{X^s}{s}\right) = O(X^{\theta}).
        \]
        where the implied constant may depend on $K$, $T\normal G\subseteq S_n$, and $\pi$.
    \end{enumerate}
    If $s_1,s_2,...,s_r$ are ordered so that ${\rm Re}(s_1)\ge {\rm Re}(s_2)\ge \cdots \ge {\rm Re}(s_r)$ with decreasing order of the poles with identical real parts, we call any term $\underset{s=s_j}{\rm Res}\left(D_{K,T(\pi)}(s)\frac{X^s}{s}\right)$ a \textbf{lower order term} if ${\rm Re}(s_j) < {\rm Re}(s_1)$ or ${\rm Re}(s_j)={\rm Re}(s_1)$ with $s_j$ having a lower order than $s_1$.
\end{definition}

The expression
\[
D_{K,T(\pi)}(s) = \sum_{\substack{\phi\in \Surj(G_K,G)\\\phi \equiv \pi \pmod T}} |\disc(\phi)|^{-s}
\]
shows that $D_{K,T(\pi)}(s)$ depends only on $\pi \pmod T$, i.e. depends only on the Galois action $T(\pi)$ and not on $\pi$.

We also give an explicit upper bound for $\theta(K,T(\pi))$ when $T$ is abelian in terms of subconvexity bounds for Hecke $L$-functions in Theorem \ref{thm:best_bound}. The subconvexity bounds proven in \cite{sohne1997} give the following unconditional result:

\begin{corollary}\label{cor:uncond_twist}
    Let $K$ be a number field, $G\subseteq S_n$ a transitive subgroup, $T\normal G$ abelian, and $\pi:G_K\to G$ a continuous homomorphism. Then for each $D\in \ind(T-\{1\})$ or $D=2a(T)$,
    \[
    \theta(K,T(\pi)) \le \frac{1}{a(T)} - \frac{\displaystyle\frac{1}{a(T)} - \frac{1}{D}}{\displaystyle 1+\sum_{\substack{g\in T-\{1\}\\\ind(g) < D}} \frac{[K:\Q]}{3}\left(1 - \frac{\ind(g)}{D}\right)}.
    \]
\end{corollary}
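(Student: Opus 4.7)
The plan is to obtain Corollary~\ref{cor:uncond_twist} as a direct specialization of the master bound Theorem~\ref{thm:best_bound}, which is advertised in the excerpt as giving an explicit upper bound for $\theta(K,T(\pi))$ in terms of subconvexity exponents for Hecke $L$-functions. The statement has exactly the same shape as the untwisted Corollary~\ref{cor:uncond} once $G$ is replaced by $T$ and Galois orbits are taken under the twisted cyclotomic action $\pi*\chi^{-1}$; accordingly, the two corollaries should fall out of the same plug-in calculation, and the main task is to track the twist through the bookkeeping.

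The starting point is the Poisson-summation decomposition used in \cite{alberts-odorney2021}, which writes $D_{K,T(\pi)}(s)$ as a finite sum of Euler products indexed by characters of $H^1(K,T(\pi))$, each of which is (up to finitely many correction factors) a product of Hecke $L$-functions. The trivial character contributes the Dedekind-type factor responsible for the pole at $s=1/a(T)$, and each non-identity $g\in T$ contributes an $L$-factor whose rightmost pole sits at $s=1/\ind(g)$. Into Theorem~\ref{thm:best_bound} I would feed Sohne's \cite{sohne1997} uniform subconvexity bound for Hecke $L$-functions of a number field $K$, which on the critical line gives a polynomial growth rate of order $[K:\Q]/6$ in the analytic conductor, i.e.\ saves a factor of $1/3$ off the convexity exponent $[K:\Q]/4$. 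Interpolating linearly between the polar line $s=1/\ind(g)$ (where the $L$-factor is bounded up to the pole) and the critical line (where Sohne's bound applies), each $g\in T-\{1\}$ with $\ind(g)<D$ contributes a growth exponent of $\frac{[K:\Q]}{3}\bigl(1-\ind(g)/D\bigr)$ on the line $\mathrm{Re}(s)=1/D$. Summing over such $g$ and adding $1$ for the contribution of the trivial character produces exactly the denominator in the corollary.

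The final step is the Tauberian/Perron argument of Section~\ref{sec:Tauberian}: shift the contour from $\mathrm{Re}(s)=1/a(T)+\epsilon$ past the finitely many poles described by Theorem~\ref{thm:generating_mero_twist} down to the line $\mathrm{Re}(s)=1/D$, truncate at height $T_0$, and balance the error from the tail against the growth on the shifted line. This optimization turns a horizontal shift of size $\frac{1}{a(T)}-\frac{1}{D}$ and a polynomial growth exponent equal to the denominator above into the announced bound on $\theta(K,T(\pi))$. The only genuine difficulty, already absorbed into Theorem~\ref{thm:best_bound}, is to verify that Sohne's bound is uniform enough in the conductor aspect to accommodate the finitely many Hecke characters arising from the Poisson-summation decomposition, uniformly across the twist data encoded in $T(\pi)$; once that uniformity is in hand, the computation is identical to the untwisted case and the twisted nature of the problem enters only through which orbits on $T$ one sums over.
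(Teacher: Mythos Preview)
Your approach is essentially the paper's: plug S\"ohne's subconvexity bound into Theorem~\ref{thm:best_bound} and read off the corollary. Two minor corrections to your bookkeeping. First, the Euler products are not indexed by individual $g\in T$ but by Galois orbits $o\subseteq\Hom((\Q/\Z)^*,T(\pi))$; the $L$-function attached to $o$ has degree $|o|\cdot[K:\Q]$, so S\"ohne gives $\mu_{\rho_h|_{\C o}}(1/2)\le |o|[K:\Q]/6$, and the sum over $g\in T-\{1\}$ in the final formula emerges only after applying orbit--stabilizer to convert $\sum_o |o|\,(\cdots)$ into $\sum_g(\cdots)$. Second, your worry about uniformity in the conductor aspect is misplaced: the dual Selmer group $H^1_{ur^\perp}(K,T(\pi)^*)$ is finite, so only finitely many Hecke characters occur and their conductors are absorbed into the implied constant; only the $t$-aspect of S\"ohne's bound is used. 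The ``$+1$'' in the denominator comes from the $\xi+1$ in the Tauberian Theorem~\ref{thm:xi+1}, not from the trivial character.
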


This proves that $\theta(K,T(\pi)) < 1/a(G)$ gives an unconditional power savings for abelian $T$. In fact, the bound in Corollary \ref{cor:uncond_twist} is the same as for Corollary \ref{cor:uncond} where at most the index function changes depending on the transitive representation of $G$. In particular, for fixed transitive $G\subseteq S_n$ and fixed abelian $T\normal G$, the unconditional bound on $\theta(K,T(\pi))$ is independent of $\pi$, i.e. independent of the Galois action on $T$ induced by $\pi$.

Finally, we again have the best case scenario that follows from the generalized Lindel\"of hypothesis:

\begin{corollary}\label{cor:GLH_twist}
    Let $K$ be a number field, $G\subseteq S_n$ a transitive subgroup, $T\normal G$ abelian, and $\pi:G_K\to G$ a continuous homomorphism. The generalized Lindel\"of hypothesis for Hecke $L$-functions implies that $\theta(K,T(\pi)) \le \frac{1}{2a(T)}$.
\end{corollary}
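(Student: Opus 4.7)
The plan is to derive Corollary \ref{cor:GLH_twist} from the machinery behind Theorem \ref{thm:generating_mero_twist} by inputting Lindel\"of-strength bounds for the Hecke $L$-functions appearing in that decomposition. The proof of Theorem \ref{thm:generating_mero_twist} uses twisted Poisson summation on $H^1(K,T(\pi))$ to express $D_{K,T(\pi)}(s)$ as a finite linear combination of Euler products, each of which factors, on ${\rm Re}(s) > 1/(2a(T))$, as a finite product of Hecke $L$-functions $L(s\cdot\ind(o),\psi_o)$---one factor per $(\pi*\chi^{-1})$-orbit $o$ of $T$, with $\psi_o$ a Hecke character determined by $\pi$ and $\chi$---times a Dirichlet series $R(s)$ that is absolutely and uniformly convergent on compact subsets of this halfplane.

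Given this factorization, any pole of $D_{K,T(\pi)}(s)$ on ${\rm Re}(s) > 1/(2a(T))$ must lie at a rescaled pole $s = 1/\ind(o)$ of a trivial-character $L$-factor, verifying part (a) of Definition \ref{def:theta_twist} for any $\theta > 1/(2a(T))$. To verify part (b), I would apply the Tauberian theorem of Section \ref{sec:Tauberian} (equivalently, a truncated Perron formula): shifting the contour past these finitely many poles down to the line ${\rm Re}(s) = 1/(2a(T)) + \epsilon$ picks up exactly the sum of residues in Definition \ref{def:theta_twist}(b), and the contribution of the new vertical contour is bounded by $X^{1/(2a(T)) + \epsilon}$ times $\int |D_{K,T(\pi)}(\sigma + it)|/|\sigma + it|\, dt$, up to harmless powers of $\log X$ from the truncation.

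Under GLH every Hecke $L$-function satisfies $L(\sigma + it,\psi) \ll_{\psi,\epsilon} (1+|t|)^\epsilon$ on $\sigma \ge 1/2 + \epsilon$. After the rescaling $s \mapsto s\cdot\ind(o)$, and using $\ind(o) \ge a(T)$, each factor $L(s\cdot\ind(o),\psi_o)$ is bounded by $(1+|t|)^\epsilon$ on ${\rm Re}(s) = 1/(2a(T)) + \epsilon$. Combined with the uniform boundedness of $R(s)$, this yields $D_{K,T(\pi)}(\sigma + it) \ll (1+|t|)^\epsilon$ on that line, so the shifted contour integral is $O(X^{1/(2a(T)) + 2\epsilon})$. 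Since $\epsilon > 0$ is arbitrary, $\theta(K,T(\pi)) \le 1/(2a(T))$.

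The step I expect to be the main obstacle is ensuring that the Poisson-summation factorization really does produce honest Hecke $L$-functions to which GLH applies: the cross terms involve characters twisted simultaneously by $\pi$ and by the cyclotomic action, and one must check each assembles into the $L$-function of a Gr\"ossencharakter rather than a merely formal Euler product. This bookkeeping is exactly what Theorem \ref{thm:generating_mero_twist} and the explicit Theorem \ref{thm:best_bound} already carry out, so Corollary \ref{cor:GLH_twist} follows by substituting the Lindel\"of subconvexity exponent (effectively $0$) into the bound of Theorem \ref{thm:best_bound} in place of S\"ohne's unconditional exponent.
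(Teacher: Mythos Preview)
Your proposal is correct and lands on exactly the paper's argument: substitute the Lindel\"of exponent $\mu_{\rho_h|_{\C o}}(1/2)=0$ into Theorem \ref{thm:best_bound} with $D=2a(T)$, which collapses the denominator to $1$ and yields $\theta(K,T(\pi)) \le \frac{1}{a(T)} - \left(\frac{1}{a(T)} - \frac{1}{2a(T)}\right) = \frac{1}{2a(T)}$. One caution: your informal contour sketch (``the contribution of the new vertical contour is bounded by $X^{1/(2a(T)) + \epsilon}$ times $\int |D_{K,T(\pi)}(\sigma + it)|/|\sigma + it|\, dt$, up to harmless powers of $\log X$ from the truncation'') is precisely the shortcut the paper warns against in Subsection \ref{subsec:common_error}---the Perron truncation error is not merely a log power and genuinely needs the difference-operator machinery of Theorem \ref{thm:xi+1}---but since you explicitly invoke that Tauberian theorem and Theorem \ref{thm:best_bound}, the argument is sound.
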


Thus, we expect square root savings compared to the main term predicted in \cite{alberts2021}. However, in the twisted setting some (potential) lower order terms appearing in Definition \ref{def:theta_twist} coming from poles in Theorem \ref{thm:generating_mero_twist} may be encompassed by this error term. For example, consider $T=C_2\times C_2\normal D_4=C_2\wr C_2$. In this case $a(T) = \ind(g,1)=\ind(1,g)=1$, but $\ind(g,g) = 2 = 2a(T)$.

\subsection{History of Previous Results}\label{subsec:history}

Malle's conjecture is known to hold in numerous cases, including
\begin{itemize}
    \item abelian groups \cite{wright1989},
    \item $S_n$ in degree $n$ for $n=3,4,5$ \cite{bhargava2014,bhargava-shankar-wang2015},
    \item $S_n\times A$ for $n=3,4,5$ and $A$ any abelian group \cite{jwang2021},
    \item $D_4$ in degree $4$ \cite{cohen-diaz-y-diaz-olivier2005},
    \item generalized quaternion groups \cite{klunersHab2005},
    \item nilpotent groups for which $\{g\in G-\{1\} : \ind(g) = a(G)\}$ is central \cite{koymans-pagano2021}, and
    \item \emph{most} wreath products $C_2\wr H$ \cite{kluners2012}.
\end{itemize}
In an upcoming joint work of the author with Lemke Oliver, Wang, and Wood, we will prove Malle's conjecture for a much larger family of groups depending on the locations of minimal index elements in the group.

The methods in each of these cases will actually prove
\[
\#\Surj(G_K,G;X) = X^{1/a(G)}P(\log X) + o(X^{1/a(G)}),
\]
where $P$ is a polynomial of degree $b(K,G)$ with positive leading coefficient. This quality of error term is not always mentioned explicitly, but often readily follows from the author's methods (most often a Tauberian theorem).

Power saving error bounds are considerably rarer. The author is aware of only the following three cases for which a power saving error bound is known. In all three of these cases, the original power savings is achieved by a shifting contour argument.
\begin{itemize}
    \item There is a power saving error for counting $S_3$-extensions of $\Q$, revealing a secondary term
    \[
    \#\Surj(G_{\Q},S_3;X) = c_1 X + c_2 X^{5/6} + O_{\epsilon}(X^{2/3+\epsilon})
    \]
    due to Bhargava--Taniguchi--Thorne \cite{bhargava-taniguchi-thorne2023} for explicit constants $c_1 > 0$ and $c_2 < 0$, giving improved power savings over \cite{bhargava-shankar-tsimerman2012,taniguchi-thorne2013}. This has been lifted to a power saving error bound revealing an analogous secondary term for $S_3\times A$-extensions of $\Q$ for abelian groups $A$ by Wang \cite{jwang2017} using inductive methods. There do exist similar power saving error bounds over other number fields, but they are not strong enough to reveal the conjectured secondary term at $X^{5/6}$.
    \item Two recent works have proven power saving error bounds for counting $D_4$-extensions, namely
    \[
    \#\Surj(G_\Q,D_4;X) = c(\Q,D_4)X + O(X^{5/8+\epsilon})
    \]
    for an explicit constant $c(\Q,D_4) > 0$ due to \cite{mcgown-tucker2023}, improving an error term of size $O(X^{3/4+\epsilon})$ proven by \cite{cohen-diaz-y-diaz-olivier2002}, and
    \[
    \#\Surj(G_K,D_4;X) = c(K,D_4)X + O(X^{3/4+\epsilon})
    \]
    for an explicit constant $c(K,D_4) > 0$ due to \cite{bucur-florea-serrano-lopez-varma2022}, which improves to $O(X^{1/2+\epsilon})$ assuming the Lindel\"of hypothesis.
    \item Abelian groups are known to have power saving error terms. Frei--Loughran--Newton \cite[Theorem 1.7]{frei-loughran-newton2018} prove the existence of a power saving error term for counting abelian extensions ordered by discriminant with restricted local conditions, although they do not give an explicit value for the power saving exponent. In essence, they prove that $\theta(K,G) < 1/a(G)$ for all number fields $K$ and abelian $G$.
    
    Explicit power saving error bounds are known for small abelian groups, although these are seldom mentioned in the literature. When compiling data on the distributions of extensions with degree $\le 4$, Cohen--Diaz y Diaz--Olivier show in \cite{cohen-diaz-y-diaz-olivier2006} that the data supports a secondary term with power saving error bound for $C_4$-extensions of $\Q$ of the form
    \[
    \#\Surj(G_\Q,C_4;X) = c_1 X^{1/2} + c_2 X^{1/3} + o(X^{1/3}).
    \]
    They remark that this follows from standard contour integration techniques, despite being unable to find such a result in the literature.

    Some small abelian groups correspond to well studied sequences of integers for which power savings are known, for instance power saving error bounds for counting squarefree numbers translate to power saving error bounds for counting quadratic extensions. The ``standard contour integration" techniques referred to by Cohen--Diaz y Diaz--Olivier are equally applicable to other small abelian groups whose generating series have a known meromorphic continuation, but this treatment does not appear in full generality in the literature.

    Lee--Oh \cite{lee-oh2012} study $\theta(\Q,C_p)$ for cyclic groups $C_p$ of prime order $p\ge 3$. They prove an analytic continuation for the generating series of $C_p$-extensions over $\Q$ to the right halfplane ${\rm Re}(s) > \frac{1}{4a(C_p)}$ with explicit locations of the poles. Their proof is a special case of Theorem \ref{thm:generating_mero}, where they use more details of Moroz's technique \cite{moroz_1988} to gives descriptions of the poles in the strip $\frac{1}{4a(C_p)}< {\rm Re}(s) \le \frac{1}{2a(C_p)}$. However, we believe their work has an error when applying a Tauberian theorem to bound $\theta(\Q,C_p)$. They appear to conflate the region of meromorphic continuation with the power savings for the error term. We give an explanation for the suspected gap in \cite{lee-oh2012} in Subsection \ref{subsec:common_error}.
\end{itemize}

Our main results realize Cohen--Diaz y Diaz--Olivier's observation for $C_4$, and rigorously generalize it to all abelian groups. Despite Cohen--Diaz y Diaz--Olivier's observation, this is not immediate in general.

For another small example, consider the generating series of $C_3$ extensions of $\Q$. This is given by
\[
1 + \sum_{\substack{K/\Q\\\Gal(K/\Q)\cong C_3}} \disc(K/\Q)^{-s} = \frac{1}{2}\left(1 + 2\cdot 3^{-4s}\right)\prod_{p\equiv 1(\text{mod}\ 3)}\left(1+2p^{-2s}\right).
\]
The standard process reduces this to $\zeta(2s)L(2s,\chi_3)G(s)$ where $\chi_3$ is the quadratic character associated to $\Q(\zeta_3)/\Q$ and $G(s)$ is an absolutely convergent Dirichlet series on the region ${\rm Re}(s)>1/4$. By applying a Tauberian theorem and shifting the contour integral to the line ${\rm Re}(s) = 1/4+\epsilon$, we produce a power savings of the form
\[
c_1 X^{1/2} + O_{\epsilon}(X^{5/16+\epsilon}).
\]
using S\"ohne's subconvexity bounds on Dirichlet $L$-functions (This is the power savings given by Corollary \ref{cor:uncond}). Under GRH, this can be improved to $O(X^{1/4+\epsilon})$ using the same line of integration.

For general $K$ and abelian $G$, there may be more than one Euler product with more complicated Euler factors. We will similarly prove a meromorphic continuation by factoring out powers of $L$-functions. The primary obstacle to generalizing this example is proving that the meromorphic continuation is achieved by factoring out specifically \emph{positive integer} powers of $L$-functions, so that no branch cuts are introduced. Proving that this is the case, which is done by proving new extensions of the local Tate pairing, can be considered the main insight of this paper.

\subsection{Layout of the Paper}

This paper has three components:
\begin{enumerate}[(1)]
    \item an algebraic component, which proves the necessary facts to conclude the meromorphic continuations in Theorem \ref{thm:generating_mero} and \ref{thm:generating_mero_twist}. This consists of Sections \ref{sec:prelim}, \ref{sec:mero}, \ref{sec:local_Tate}, and \ref{sec:Galois_rep}.
    
    \item an analytic component, which performs the contour shifting argument to prove Theorem \ref{thm:xi+1} in Section \ref{sec:Tauberian}, which is a new Tauberian theorem. We then proceed to analyze the bounds produced by Theorem \ref{thm:xi+1} to prove the remaining power saving results in the Introduction in Sections \ref{sec:subconvexity} and \ref{sec:optimizing}.
    
    \item an investigation of lower order terms. In Section \ref{sec:obstructions_lower_order} we describe the obstructions to proving that the poles given by Theorems \ref{thm:generating_mero} and \ref{thm:generating_mero_twist} do not vanish. We then prove non-vanishing of these poles in some very simple cases in Section \ref{sec:examples_lower_order}.
\end{enumerate}

The algebraic section begins with Section \ref{sec:prelim} on preliminaries. We summarize the main results of \cite{alberts-odorney2021} on the formal decomposition of generating Dirichlet series and prove some basic representation theory facts that will be needed later in the paper.

In Section \ref{sec:mero} we state Theorem \ref{thm:euler_factors}, the main algebraic result of the paper which gives an explicit description of the Euler factors appears in the generating Dirichlet series. These factors are all of the form
\[
1 + \sum_{j} \chi_j(\Fr_p) p^{-a_j s}
\]
for some positive integers $a_j$ and Galois characters $\chi_j$. It was proven by Moroz \cite{moroz_1988} that such Euler products have a meromorphic continuation to ${\rm Re}(s) > 0$. By doing some of the steps in Moroz's proof explicitly, we show that Theorems \ref{thm:generating_mero} and \ref{thm:generating_mero_twist} follow from this description.

We then prove Theorem \ref{thm:euler_factors} in Sections \ref{sec:local_Tate} and \ref{sec:Galois_rep}. Section \ref{sec:local_Tate} contains the Galois cohomology needed to describe the Euler factors. This is the deepest section of the paper: the Euler factors are described in \cite{alberts-odorney2021} using the local Tate pairing. We utilize the theory of cup products and the Hochschild-Serre spectral sequence in the process of unpacking the Euler factors. Section \ref{sec:Galois_rep} contains the representation theory needed to simplify the Euler factors into factors involving Galois characters.

The analytic portion of the paper beings with Section \ref{sec:Tauberian}, which states and proves the Tauberian theorem we will use to convert the meromorphic continuation into asymptotic information. This is Theorem \ref{thm:xi+1}. The proof is largely similar to the proof of Landau's Tauberian theorem found (in French) in \cite{roux2011}, with Theorem \ref{thm:xi+1} slightly improving the power savings under the same hypotheses. We present the proof in its entirety to showcase the adjustments needed to improve the power savings and for the sake of completeness of this paper.

Section \ref{sec:subconvexity} is dedicated to discussing the topic of subconvexity. We describe the problem in general along with some of the standard facts, but we do not attempt to provide a complete account of the best known subconvexity bounds. We then formulate the power savings in terms of subconvexity bounds in Section \ref{sec:optimizing}, which requires some optimization. Essentially, the optimal choice for the line that we shift the contour to in Theorem \ref{thm:xi+1} depends on the strength of the subconvexity bounds and we give a full account of this phenomenon.

We then discuss obstructions to the existence of lower order terms in Section \ref{sec:obstructions_lower_order}, and use some ad hoc arguments to prove that the lower order terms do not vanish in certain cases via Theorem \ref{thm:nonvanishing} in Section \ref{sec:examples_lower_order}.

Lastly, we conclude with Appendix \ref{app:local_cond_and_ordering} on counting $G$-extensions satisfying restricted local conditions or ordered by other invariants. Our methods extend naturally to these questions, and under certain natural conditions on the families of local conditions and invariants our methods prove analogous results. We summarize how our method applies to these generalizations in the appendix.

\section*{Acknowledgements} The author would like to thank Robert Lemke Oliver for numerous conversations on the analytic details of this paper. The author would also like to thank Alina Bucur and Kiran Kedlaya for helpful conversations, as well as Daniel Loughran, Gunter Malle, and Evan O'Dorney for valuable feedback.

\section{Preliminaries}\label{sec:prelim}

\subsection{Summary of Alberts--O'Dorney}\label{subsec:AO}

The author in a joint paper with O'Dorney \cite{alberts-odorney2021} gave the asymptotic main term of the counting functions
\[
H^1(K,T(\pi);X) = \{f\in H^1(K,T(\pi)):|\disc(f*\pi)|\le X\}
\]
by giving a decomposition for the generating series
\[
\sum_{f\in H^1(K,T(\pi))} |\disc(f*\pi)|^{-s}.
\]
This decomposition is produced in a natural way via Poisson summation. We summarize the main points here: $H^1(K,T(\pi))$ is (up to a finite index subgroup) a discrete subgroup of the adelic product
\[
H^1(\mathbb{A}_K,T(\pi)) := \prod_p{}' H^1(K_p,T(\pi))_{H^1_{ur}(K_p,T(\pi))}.
\]
By local Tate duality and the Poitou-Tate nine term exact sequence, $H^1(K,T(\pi))$ is exactly annihilated by the image of $H^1(K,T(\pi)^*)$ in $H^1(\mathbb{A}_K,T(\pi)^*)$, where $T(\pi)^* = \Hom(T(\pi),\mu)$ is the Tate dual. Thus, setting $w_s(f) = |\disc(f*\pi)|^{-s}$ for a fixed complex number $s$ with ${\rm Re}(s)>1$, Poisson summation via \cite[Theorem 2.3]{alberts-odorney2021} implies
\[
\sum_{f\in H^1(K,T(\pi))} |\disc(f*\pi)|^{-s} = \frac{|H^0(K,T(\pi))|}{|H^0(K,T(\pi)^*)|}\sum_{h\in H^1(K,T(\pi)^*)} \widehat{w_s}(h),
\]
Well known properties of $\disc(f*\pi)$ imply very nice properties for $\widehat{w_s}$. We state these, but leave the ideas behind their proof to \cite{alberts-odorney2021}:
\begin{itemize}
    \item $w_s$ is periodic with respect to $\prod_{p\mid \infty} H^1(G_{K_p},T(\pi))\times\prod_{p \nmid \infty} H^1_{ur}(K_p,T(\pi))$, as we know the discriminant depends only on ramification. This implies $\widehat{w_s}$ is supported on the corresponding annihilator $\prod_{p\mid \infty} H^1_{ur}(K_p,T(\pi))^{\perp}$. In particular, the righthand summation is supported on the \emph{finite} set
    \[
    H^1_{ur^{\perp}}(K,T(\pi)^*) := H^1(K,T(\pi)^*) \cap \prod_{p\mid \infty} H^1_{ur}(K_p,T(\pi))^{\perp}.
    \]
    Finiteness follows from the fact that for all places $p\nmid |T|\disc(\pi)\infty$ it follows that $H^1_{ur}(K_p,T(\pi))^{\perp} = H^1_{ur}(K_p,T(\pi)^*)$. This is the dual Selmer group to $H^1_{ur}(G_{\Q},T(\pi))$.
    \item $w_s$ is multiplicative, which implies $\widehat{w_s}$ is also multiplicative. In particular, $\widehat{w_s}(h)$ is given by the Euler product
    \[
    \widehat{w_s}(h) = \prod_p \left(\frac{1}{|H^0(K_p,T(\pi))|}\sum_{f\in H^1(K_p,T(\pi))} \langle f,h_p\rangle |\disc(f*\pi)|^{-s}\right),
    \]
    where $h_p = \res_{G_{K_p}} h$ is the restriction of $h$ to the decomposition group at $p$ and $\langle,\rangle$ is the local Tate pairing.
\end{itemize}
The asymptotic growth rate is computed in the usual way by applying a Tauberian theorem to each of the finitely many Euler products appearing in the sum, with the rightmost pole of $\widehat{w_s}(0)$ supplying the main term. It is shown that the rightmost pole of $\widehat{w_s}(0)$ is not cancelled out by the poles of $\widehat{w_s}(h)$, nor is it cancelled out by the sieve to surjective coclasses $\Surj(G_K,T,\pi;X)$, by which it is concluded that the main term has a positive leading coefficient.\footnote{When counting coclasses with restricted local conditions, it is uncommon but possible for the main term to cancel out. This occurs when the restricted local conditions form a Grunwald--Wang counter example, and in these cases everything cancels out and there are precisely \emph{zero} coclasses satisfying those particular local conditions. This possibility was overlooked in the original results in \cite{alberts-odorney2021}, and is addressed in a corrigendum \cite{alberts-odorney2023}. We will not consider restricted local conditions in this paper, so we will not need to watch out for Grunwald--Wang counter examples.}

A key step in applying the Tauberian theorem in \cite{alberts-odorney2021} is constructing a meromorphic continuation of each Euler product $\widehat{w_s}(h)$. This is done via \cite[Proposition 2.2]{alberts2021}, as the map $p\mapsto \langle f, h_p\rangle$ given by the local Tate pairing is a Frobenian map, i.e. all but finitely many factors are determined by the splitting type of $p$ in some finite extension. This is essential for proving the asymptotic results of \cite{alberts2021,alberts-odorney2021}.

We will give an alternate construction for the meromorphic continuation, which is more explicit and significantly stronger than the results used in \cite{alberts2021,alberts-odorney2021}. This improvement will directly result in power savings after applying a Tauberian theorem.

\subsection{Twisted Permutation Representation}

As part of our work producing meromorphic continuations, we will make use of Artin $L$-functions associated to a special type of representation.

\begin{definition}
A representation $\rho:G\to {\rm GL}_n(\C)$ is called a \textbf{twisted permutation representation} if, for each $g\in G$, the matrix $\rho(g)$ has exactly one nonzero entry in each row and column. These have also been called \textbf{monomial representations} in the literature.

In this case, call $\overline{\rho}:G\to {\rm GL}_n(\C)$ the associated permutation representation defined by letting $\overline{\rho}(g)$ have a $1$ in every entry that $\rho(g)$ is nonzero, and a $0$ in every other entry.
\end{definition}

This is a generalization of a permutation representation, for which every row and column has exactly one nonzero entry given by $1$. If $\rho:G_K \to {\rm GL}_n(\C)$ is a permutation representation, then the Artin $L$-function $L(s,\rho)$ is analytically continued by way of the decomposition
\[
\rho = \bigoplus_o \ind_{\Stab(o)}^{G_K}(1)
\]
where the sum is over the orbits of the permutation action induced by $\rho$ on the standard basis vectors of $\C^n$. Recalling that Artin $L$-functions are invariant under induction, this implies
\[
L(s,\rho) = \prod_{o} \zeta_{\overline{K}^{\Stab(o)}}(s),
\]
which is meromorphic with a single pole at $s=1$ with order given by the number of orbits. This process generalizes for twisted permutation representations:

\begin{lemma}\label{lem:twistedperm}
If $\rho:G\to {\rm GL}_n(\C)$ is a twisted permutation representation for which $\overline{\rho}$ is a transitive permutation representation, then
\[
\rho = \ind_{\Stab(1)}^G(\rho|_{\Stab(1)} | \langle \vec{e}_1\rangle),
\]
where $\vec{e}_1$ is the basis vector associated to $1\in \{1,...,n\}$ acted on by $G$ via $\overline{\rho}$.
\end{lemma}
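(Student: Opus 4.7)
The plan is to exhibit the relevant one-dimensional subrepresentation of $\Stab(1)$ explicitly, and then construct a $G$-equivariant isomorphism from the induced representation to $\C^n$ by tracking how the single line $\langle \vec{e}_1 \rangle$ is translated around by coset representatives.

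First I would verify that $\langle \vec{e}_1 \rangle$ really is a $\Stab(1)$-subrepresentation of $\rho|_{\Stab(1)}$. By definition of $\Stab(1)$ and $\overline{\rho}$, every $g \in \Stab(1)$ satisfies $\overline{\rho}(g) \vec{e}_1 = \vec{e}_1$; since $\rho$ and $\overline{\rho}$ have exactly the same support pattern (one nonzero entry in each row and column, in the same positions), this forces $\rho(g) \vec{e}_1 = \lambda(g) \vec{e}_1$ for some $\lambda(g) \in \C^\times$. The resulting homomorphism $\lambda : \Stab(1) \to \C^\times$ is precisely the one-dimensional character $\rho|_{\Stab(1)}|\langle \vec{e}_1\rangle$ that appears in the statement.

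Next I would use transitivity of $\overline{\rho}$ to choose coset representatives $g_1 = 1, g_2, \ldots, g_n$ of $\Stab(1)$ in $G$ such that $\overline{\rho}(g_i)\vec{e}_1 = \vec{e}_i$, so that writing $\rho(g_i)\vec{e}_1 = c_i \vec{e}_i$ with $c_i \in \C^\times$ the vectors $c_i \vec{e}_i$ form a basis of $\C^n$. Define
\[
\phi : \ind_{\Stab(1)}^G\bigl(\rho|_{\Stab(1)}|\langle \vec{e}_1\rangle\bigr) \longrightarrow \C^n, \qquad g_i \otimes \vec{e}_1 \longmapsto \rho(g_i)\vec{e}_1 = c_i \vec{e}_i.
\]
Since both sides have dimension $n$ and $\phi$ carries a basis to a basis, it is a linear isomorphism.

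The final step, which is the only real computation, is checking $G$-equivariance of $\phi$. For any $g \in G$ and representative $g_i$, write $g g_i = g_j h$ with $h \in \Stab(1)$; then in the induced module $g \cdot (g_i \otimes \vec{e}_1) = g_j \otimes \lambda(h)\vec{e}_1$, which $\phi$ sends to $\lambda(h)\rho(g_j)\vec{e}_1 = \rho(g_j)\rho(h)\vec{e}_1 = \rho(g g_i)\vec{e}_1 = \rho(g)\phi(g_i \otimes \vec{e}_1)$, as required. There is no real obstacle beyond bookkeeping; the content of the lemma is simply that replacing each $1$ in the permutation matrices $\overline{\rho}(g)$ by the nonzero scalar $\rho(g)$ prescribes at sends it to does not change the induced-module structure, because the scalars assemble along $\Stab(1)$ into the genuine one-dimensional character $\lambda$ from the first step.
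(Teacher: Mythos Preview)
Your proof is correct and follows essentially the same approach as the paper: both verify that $\langle \vec{e}_1\rangle$ is a one-dimensional $\Stab(1)$-subrepresentation and then exhibit the natural $G$-map $\C[G]\otimes_{\C[\Stab(1)]}\langle \vec{e}_1\rangle \to V$, $g\otimes v\mapsto \rho(g)v$, as an isomorphism. The paper's version is terse (it simply asserts the evaluation map is an isomorphism of $G$-modules), while you spell out the coset-representative basis and the equivariance check explicitly; this is the same argument with more detail shown. Your final summary sentence has a garbled phrase (``prescribes at sends it to''), but this is cosmetic and does not affect the mathematics.
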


This implies that the Artin $L$-fucntion associated to a twisted permutation representation $\rho:G_K\to {\rm GL}_n(\C)$ decomposes as
\[
L(s,\rho) = \prod_{o} L(s, \rho|_{\Stab(i_o)} | \langle \vec{e}_{i_o}\rangle),
\]
where $i_o$ is a choice of base point for the orbit. In particular, these are one dimensional representations over $\Stab(i_o)$ and therefore given by Hecke $L$-functions. Hecke $L$-functions are entire (except for the trivial character, in which case they have a simple pole at $s=1$). This immediately proves the following:

\begin{corollary}\label{cor:twisted_perm_mero}
    Let $\rho:G_K\to {\rm GL}_n(\C)$ be a twisted permutation representation. Then $L(s,\rho)$ is meromorphic with a pole at $s=1$ whose order is given by the number of orbits $o\subseteq \{1,...,n\}$ under the action given by $\overline{\rho}$ such that each basis vector $\vec{e}_i$ with $i\in o$ is an eigenvalue of $\rho(g)$ with eigenvalue $1$ for all $g\in \Stab(i)$.
\end{corollary}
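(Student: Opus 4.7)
The plan is to reduce to the transitive case handled by Lemma \ref{lem:twistedperm}, then use inductivity of Artin $L$-functions to express $L(s,\rho)$ as a product of Hecke $L$-functions, whose analytic properties are classical. First I would decompose $\{1,\dots,n\} = \bigsqcup_o o$ into the orbits of $\overline{\rho}$. Since $\rho(g)$ permutes the basis vectors of $\C^n$ up to nonzero scalars, each subspace $V_o = \bigoplus_{i\in o}\langle \vec{e}_i\rangle$ is $\rho$-stable, and $\rho|_{V_o}$ is itself a twisted permutation representation whose associated permutation representation $\overline{\rho}|_{V_o}$ is transitive. Hence $\rho = \bigoplus_o \rho|_{V_o}$ and $L(s,\rho) = \prod_o L(s,\rho|_{V_o})$.

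Applying Lemma \ref{lem:twistedperm} to each summand, choose a base point $i_o\in o$ and write $\rho|_{V_o} = \ind_{\Stab(i_o)}^{G_K} \chi_o$, where $\chi_o = \rho|_{\Stab(i_o)}|_{\langle \vec{e}_{i_o}\rangle}$ is a one-dimensional character of the open subgroup $\Stab(i_o)\le G_K$. By the invariance of Artin $L$-functions under induction,
\[
L(s,\rho) = \prod_o L(s,\chi_o),
\]
where each $L(s,\chi_o)$ is a Hecke $L$-function attached to a finite order character of $G_{\overline{K}^{\Stab(i_o)}}$. It is classical that such a Hecke $L$-function is entire whenever $\chi_o$ is nontrivial, and that when $\chi_o = 1$ it equals $\zeta_{\overline{K}^{\Stab(i_o)}}(s)$, which is holomorphic on $\C$ except for a simple pole at $s=1$. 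Therefore $L(s,\rho)$ is meromorphic on $\C$ with a single pole at $s=1$, of order equal to the number of orbits $o$ for which $\chi_o$ is trivial.

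It remains to translate the triviality of $\chi_o$ into the intrinsic eigenvalue condition in the statement. By construction $\chi_o = 1$ if and only if $\rho(g)\vec{e}_{i_o} = \vec{e}_{i_o}$ for every $g\in \Stab(i_o)$. To see this does not depend on the choice of base point, suppose $i = \overline{\rho}(h)(i_o)$ for some $h\in G_K$; then $\Stab(i) = h\Stab(i_o)h^{-1}$, and since $\rho$ is a twisted permutation representation, $\rho(h)\vec{e}_{i_o} = c\vec{e}_i$ for some $c\in \C^\times$. For $g\in \Stab(i)$ the element $h^{-1}gh$ lies in $\Stab(i_o)$, and a one-line calculation shows that $\rho(h^{-1}gh)\vec{e}_{i_o} = \vec{e}_{i_o}$ is equivalent to $\rho(g)\vec{e}_i = \vec{e}_i$. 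Thus the condition holds for one (equivalently every) $i\in o$, matching the statement.

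There is no real obstacle in this argument: the heavy lifting is done by Lemma \ref{lem:twistedperm} and by the standard inductivity and meromorphy properties of Artin/Hecke $L$-functions. The only care needed is the base-point independence check above, which legitimizes phrasing the eigenvalue condition at the level of orbits rather than at the level of chosen representatives.
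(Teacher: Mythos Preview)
Your argument is correct and follows essentially the same route as the paper: decompose into orbit subspaces, apply Lemma~\ref{lem:twistedperm} to each transitive piece, invoke inductivity to rewrite each factor as a Hecke $L$-function, and read off the pole order from the number of orbits on which the induced one-dimensional character is trivial. Your additional base-point independence check is a welcome bit of extra care that the paper leaves implicit.
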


\begin{proof}[Proof of Lemma \ref{lem:twistedperm}]
Write $V$ for the $G$-module induced by $\rho$. Decompose $V = \bigoplus V_i$ as a direct sum of irreducible $\Stab(1)$-modules, one of which is necessarily $V_1 = \langle \vec{e}_1\rangle$. It then follows that the function
\[
\C[G]\otimes_{\C[\Stab(1)]} V_1 \to V
\]
defined by
\[
\left(\sum_g a_g g\right) \otimes v \mapsto \sum_g a_g g.v
\]
is an isomorphism of $G$-modules.
\end{proof}

\section{Explicit Meromorphic Continuation}\label{sec:mero}

Fix some $h\in H^1(K,T(\pi)^*)$ and a map $w_s:H^1(\A_K,T(\pi))\to \C$ given by $w_s(f) = |\disc(f*\pi)|^{-s}$ for each complex number $s\in C$. Our goal is to better understand the Fourier transform
\[
\widehat{w_s}(h) = \prod_p \left(\frac{1}{|H^0(K_p,T(\pi))|}\sum_{f_p\in H^1(K_p,T(\pi))}\langle f_p, h|_{G_{K_p}}\rangle \disc(f_p*\pi|_{G_{K_p}})^{-s}\right),
\]
so that we can give a stronger, more explicit meromorphic continuation than the one used in \cite{alberts-odorney2021}. This is achieved by the following theorem:

\begin{theorem}\label{thm:euler_factors}
    Fix some $h\in H^1(K,T(\pi)^*)$. Then there exists a twisted permutation representation
    \[
    \rho_h:G_K \to {\rm GL}(\C[\Hom((\Q/\Z)^*,T(\pi))])
    \]
    (given in Definition \ref{def:twisted_perm_rep}), where $(\Q/\Z)^* = \Hom(\Q/\Z,\mu)$ is the Tate dual of $\Q/\Z$, such that for each place $p\nmid |T|\disc(\pi)\disc(h)\infty$
    \[
    \frac{1}{|H^0(K_p,T(\pi))|}\sum_{f_p\in H^1(K_p,T(\pi))}\langle f_p, h|_{G_{K_p}}\rangle \disc(f_p*\pi|_{G_{K_p}})^{-s} = \sum_{o\subset \Hom((\Q/\Z)^*,T(\pi))} {\rm tr}(\rho_h|_{\C o}(\Fr_p)) p^{-\ind(o)s},
    \]
    where the sum is over the orbits $o$ of $\Hom((\Q/\Z)^*,T(\pi))$ under the Galois action. Notice that $\ind(1) = 0$ so that $p^{-\ind(1)s} = 1$ gives the constant term.
\end{theorem}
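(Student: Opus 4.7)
The plan is to establish the identity one prime at a time at all good unramified primes and then encode the data uniformly as the trace of a single global twisted permutation representation. At any prime $p \nmid |T|\disc(\pi)\disc(h)\infty$, both the Galois action on $T(\pi)$ and the class $h$ are unramified and $p$ is coprime to $|T|$, so inflation--restriction together with the standard identification of tame inertia $I_p^{\mathrm{tame}} \cong \hat{\Z}(1) = (\Q/\Z)^*$ yields
\[
H^1(K_p, T(\pi)) = H^1_{ur}(K_p, T(\pi)) \oplus \Hom((\Q/\Z)^*, T(\pi))^{\Fr_p}.
\]
A tame class corresponding to $\phi \in \Hom((\Q/\Z)^*, T(\pi))^{\Fr_p}$ has image generated by a single element $t_\phi \in T$, and the standard formula for tame discriminants gives $\disc(f_p * \pi|_{G_{K_p}}) = p^{\ind(t_\phi)}$, which is exactly $p^{\ind(o)}$ for the orbit $o$ through $\phi$.

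Next I would compute the local Tate pairing $\langle f_p, h_p\rangle$. Because $h_p \in H^1_{ur}(K_p, T(\pi)^*)$ and unramified classes of $T(\pi)$ and $T(\pi)^*$ are mutual annihilators under local Tate duality, the pairing is trivial on the unramified summand; combined with the identity $|H^1_{ur}(K_p, T(\pi))| = |H^0(K_p, T(\pi))|$, the unramified sum contributes exactly the constant term $1$, corresponding to the zero orbit in $\Hom((\Q/\Z)^*, T(\pi))$. For the tame classes I would use the tools to be developed in Section \ref{sec:local_Tate}, namely cup products and the Hochschild--Serre spectral sequence for $1 \to T(\pi) \to \pi^{-1}(\cdot) \to G \to 1$, to compute $\langle \phi, h_p\rangle$ as an explicit bilinear pairing between $t_\phi$ and the Frobenius value of $h$, producing a scalar $\chi_{h,\phi}(\Fr_p)$ built out of global Galois characters attached to the stabilizer of $\phi$.

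With these ingredients in hand, I would define $\rho_h$ on the module $\C[\Hom((\Q/\Z)^*, T(\pi))]$ by
\[
\rho_h(\sigma)\, e_\phi \;=\; \chi_{h,\phi}(\sigma)\, e_{\sigma.\phi},
\]
where the cocycle identity for $h$ is what guarantees that these scalars assemble into a genuine representation. By construction $\overline{\rho_h}$ is the natural Galois action on $\Hom((\Q/\Z)^*, T(\pi))$, so $\rho_h$ is indeed a twisted permutation representation. Restricting to $\C o$ for a single orbit and invoking Lemma \ref{lem:twistedperm}, the Frobenius trace ${\rm tr}(\rho_h|_{\C o}(\Fr_p))$ equals the sum of $\chi_{h,\phi}(\Fr_p) = \langle \phi, h_p\rangle$ over the $\Fr_p$-fixed elements $\phi \in o$. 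Summing over all orbits and pairing each with the discriminant weight $p^{-\ind(o)s}$ recovers the claimed identity.

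The main obstacle is upgrading the local character data $\chi_{h,\phi}(\Fr_p)$ to a genuine global twisted permutation representation rather than a mere prime-by-prime collection of scalars. The local Tate pairings at different primes must be restrictions of a single global cohomological pairing, and the scalars on basis vectors within an orbit must be compatible in the precise sense required for $\rho_h(\sigma\tau) = \rho_h(\sigma)\rho_h(\tau)$. Establishing this compatibility is what necessitates the detailed Galois cohomology of Section \ref{sec:local_Tate}, with cup products supplying the bridge between local Tate duality and global cocycle data, and the representation-theoretic manipulations of Section \ref{sec:Galois_rep} then repackage the output into the twisted permutation representation that will appear in Definition \ref{def:twisted_perm_rep}.
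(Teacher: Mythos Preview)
Your proposal is essentially correct and follows the same architecture as the paper: reduce the local sum at good primes to a sum over $\Hom((\Q/\Z)^*,T(\pi))^{\Fr_p}$, identify the local Tate pairing with a global parametrizing pairing built in Section~\ref{sec:local_Tate}, define $\rho_h$ so that its diagonal entries at Frobenius-fixed basis vectors are exactly those pairing values, and read off the trace. Two small corrections: the Hochschild--Serre spectral sequence actually used is for the extension $1\to I_p\to G_{K_p}\to G_{K_p}/I_p\to 1$ with coefficients in $\mu_e$, not for an extension with kernel $T(\pi)$; and the paper does the local bookkeeping at the level of cocycles $Z^1$ together with fiber sizes of the restriction map to $I_p$, rather than via a direct-sum splitting of $H^1(K_p,T(\pi))$, though your splitting argument is valid here and leads to the same sum.
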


The group $(\Q/\Z)^*$ is a procyclic group with Galois action given by the cyclotomic character $\chi:G_K\to \hat{\Z}^{\times}$. Choosing a generator gives a noncanonical isomorphism of groups $(\Q/\Z)^*\cong \hat{\Z}$, and therefore a noncanonical isomorphism of groups
\[
\Hom((\Q/\Z)^*,T(\pi)) \cong T.
\]
The index of an orbit $o$ is defined to be the index of the corresponding element of $T$. This is independent of the choice of generator of $(\Q/\Z)^*$, as $\ind(t)$ depends only on the subgroup $\langle t\rangle$ (or in other words, it is invariant under invertible powers).

The orbits of $\Hom((\Q/\Z)^*,T(\pi))$ can be considered as tame ramification types. Indeed, $I_p^{\rm tame}$ is canonically isomorphic to the prime-to-$p$ part of $(\Q/\Z)^*$. By writing all but finitely many Euler factors as a sum of characters, we can give a very explicit meromorphic continuation for $\widehat{w_s}(h)$. These types of Euler products were studied by Moroz \cite{moroz_1988}, who proved that they all have a meromorphic continuation to the region ${\rm Re}(s) > 0$.

\begin{corollary}\label{cor:mero_moroz}
    Let $w_s:H^1(\A_K,T(\pi))\to \C$ be given by $w_s(f) = |\disc(f*\pi)|^{-s}$ for each complex $s\in \C$ and fix some $h\in H^1(K,T(\pi)^*)$. Then $\widehat{w_s}(h)$ has a meromorphic continuation to the region ${\rm Re}(s) > 0$.
\end{corollary}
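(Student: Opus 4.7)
The plan is to combine the explicit description of the Euler factors in Theorem \ref{thm:euler_factors} with Moroz's theorem on meromorphic continuation of character-type Euler products. First I would separate off the finitely many bad factors at places $p \mid |T|\disc(\pi)\disc(h)\infty$: each such local factor is a finite sum of terms of the form $c_p p^{-ks}$, hence a polynomial in $p^{-s}$ and so entire in $s$. It thus suffices to meromorphically continue the product over good places, which by Theorem \ref{thm:euler_factors} has the form
\[
E(s) \;=\; \prod_{p \text{ good}} \left(\sum_{o} {\rm tr}\bigl(\rho_h|_{\C o}(\Fr_p)\bigr)\, p^{-\ind(o) s}\right),
\]
where $o$ ranges over Galois orbits of $\Hom((\Q/\Z)^*,T(\pi))$. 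The key structural input is that each restriction $\rho_h|_{\C o}$ is itself a transitive twisted permutation representation of $G_K$, so by Corollary \ref{cor:twisted_perm_mero} the Artin $L$-function $L(s,\rho_h|_{\C o})$ is meromorphic on all of $\C$, with poles only at $s=1$.

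The core step is to compare $E(s)$ to the finite product of Artin $L$-functions
\[
M_1(s) \;:=\; \prod_{o \ne \{1\}} L\bigl(\ind(o)\, s,\ \rho_h|_{\C o}\bigr).
\]
Taking logarithms and using $\log(1+x) = x + O(x^2)$, both $\log E(s)$ and $\log M_1(s)$ have the same leading Dirichlet-series contribution $\sum_{p,o} {\rm tr}(\rho_h|_{\C o}(\Fr_p))\, p^{-\ind(o) s}$, so that $E(s)/M_1(s)$ is an Euler product whose logarithm is a Dirichlet series supported on exponents bounded below by $2 \min_o \ind(o) \ge 2$. Hence $E(s)/M_1(s)$ is absolutely convergent, and analytic, on the strictly larger halfplane ${\rm Re}(s) > 1/2$. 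I would then iterate in Moroz's style: at the $k$-th stage, peel off a further finite product $M_k(s)$ of Artin $L$-functions attached to sub-representations of tensor powers of $\rho_h$, with shifted arguments $L(j s, \psi)$ for $j \ge k$. Each $M_k(s)$ is again a twisted permutation $L$-function, hence meromorphic on $\C$ by Corollary \ref{cor:twisted_perm_mero}, while the residual Euler product converges absolutely on ${\rm Re}(s) > 1/(k+1)$.

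Letting $k \to \infty$ extends $E(s)$, and therefore $\widehat{w_s}(h)$, meromorphically to the full halfplane ${\rm Re}(s) > 0$. The main obstacle I anticipate is verifying that every intermediate factor $M_k(s)$ produced by this procedure genuinely arises from a monomial (twisted permutation) representation, so that Corollary \ref{cor:twisted_perm_mero} applies uniformly at each stage and no poles outside the expected lattice of points can enter; the cross-term characters generated by the iteration must be organized so that the class of twisted permutation $L$-functions is preserved. This bookkeeping is precisely the content of Moroz's theorem in \cite{moroz_1988}, so the cleanest write-up is to invoke his result as a black box once the explicit Euler-factor shape of Theorem \ref{thm:euler_factors} is in hand.
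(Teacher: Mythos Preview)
Your proposal is correct and matches the paper's approach exactly: the paper also treats Corollary \ref{cor:mero_moroz} as an immediate consequence of Theorem \ref{thm:euler_factors} together with Moroz's theorem \cite{moroz_1988}, with no additional argument given. One small remark: your worry about whether the intermediate factors $M_k(s)$ remain twisted permutation $L$-functions is unnecessary, since Moroz's result applies to any Euler product whose factors have the form $1 + \sum_j \chi_j(\Fr_p) p^{-a_j s}$ for Galois characters $\chi_j$, and the iterative peeling only produces Hecke $L$-functions (products of characters are again characters); the paper reserves the twisted permutation structure for the finer Corollary \ref{cor:meromorphic_continuation}, not for meromorphicity itself.
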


We will need specific information about the meromorphic continuation, including the locations of poles and bounds in vertical strips, in order to use a Tauberian theorem. At the same time, we will not need the full region of meromorphic continuation proven by Moroz. Due to growth in vertical strips, it is not optimal to shift the contour integral used in a Tauberian theorem all the way to ${\rm Re}(s) = \epsilon$ for some arbitrarily small $\epsilon > 0$. It will suffice for us to use the following, more explicit result:

\begin{corollary}\label{cor:meromorphic_continuation}
    Let $w_s:H^1(\A_K,T(\pi))\to \C$ be given by $w_s(f) = |\disc(f*\pi)|^{-s}$ for each complex $s\in \C$, $h\in H^1(K,T(\pi)^*)$, and $\rho_h$ the representation given by Definition \ref{def:twisted_perm_rep}. Then
    \[
    \widehat{w_s}(h) = B(s) \prod_{o\subseteq \Hom((\Q/\Z)^*,T(\pi))} L(\ind(o)s, \rho_h|_{\C o}),
    \]
    where $B(s)$ is an Euler product that converges absolutely in the region ${\rm Re}(s) > \frac{1}{2a(T)}$.
\end{corollary}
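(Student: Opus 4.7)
The plan is to combine Theorem \ref{thm:euler_factors} with the standard Euler-product expansion of each Artin $L$-function, so that multiplying through by $\prod_{o\ne\{1\}} L(\ind(o)s,\rho_h|_{\C o})^{-1}$ cancels the linear-in-$p^{-\ind(o)s}$ part of each good Euler factor of $\widehat{w_s}(h)$, leaving a remainder whose local factor at a good prime is $1+O(p^{-2a(T){\rm Re}(s)})$.

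First I would split $\widehat{w_s}(h)=\prod_p E_p(s)$ into the finite product over the bad places dividing $|T|\disc(\pi)\disc(h)\infty$ and the product over the good places. At a good place Theorem \ref{thm:euler_factors} gives
\[
E_p(s) = 1 + \sum_{o\ne\{1\}} {\rm tr}(\rho_h|_{\C o}(\Fr_p))\,p^{-\ind(o)s},
\]
since the orbit $o=\{1\}$ has index zero and contributes the constant $1$. I would then expand each good local factor $L_p(\ind(o)s,\rho_h|_{\C o})=\det(I-\rho_h|_{\C o}(\Fr_p)p^{-\ind(o)s})^{-1}$ as $1+{\rm tr}(\rho_h|_{\C o}(\Fr_p))p^{-\ind(o)s}+O(p^{-2\ind(o){\rm Re}(s)})$, multiply across all $o\ne\{1\}$, and collect terms to obtain
\[
\prod_{o\ne\{1\}} L_p(\ind(o)s,\rho_h|_{\C o}) = 1 + \sum_{o\ne\{1\}} {\rm tr}(\rho_h|_{\C o}(\Fr_p))\,p^{-\ind(o)s} + R_p(s),
\]
where $R_p(s)$ is a bounded sum of terms $p^{-ks}$ with $k\ge 2a(T)$, because $k$ is either a sum of two indices each at least $a(T)$ or a multiple $j\cdot\ind(o)$ with $j\ge 2$. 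Since $\rho_h$ has finite image, all trace coefficients are uniformly bounded, so $R_p(s)=O(p^{-2a(T){\rm Re}(s)})$ with an implied constant independent of $p$.

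Defining $B(s)=\widehat{w_s}(h)/\prod_{o\ne\{1\}} L(\ind(o)s,\rho_h|_{\C o})$ (the $o=\{1\}$ factor in the statement being an $s$-independent constant to absorb), a short calculation shows the Euler factor of $B(s)$ at a good prime equals $E_p(s)/\prod_{o\ne\{1\}} L_p(\ind(o)s,\rho_h|_{\C o})=1+O(p^{-2a(T){\rm Re}(s)})$, using that the linear-order perturbations in numerator and denominator match. The standard criterion $\sum_p p^{-2a(T)\sigma}<\infty$ for $\sigma>1/(2a(T))$ then yields absolute convergence of $B(s)$ on ${\rm Re}(s)>1/(2a(T))$. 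The finitely many bad local factors contribute only a finite product of meromorphic functions on all of $\C$, so they do not affect convergence in this region.

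The main obstacle is the combinatorial bookkeeping ensuring that every off-diagonal contribution to $\prod_{o\ne\{1\}} L_p$ carries exponent at least $2a(T)$ and that the collective linear-order part exactly matches the $\sum_{o\ne\{1\}}{\rm tr}(\rho_h|_{\C o}(\Fr_p))p^{-\ind(o)s}$ supplied by Theorem \ref{thm:euler_factors}; once that identity is verified, the absolute-convergence estimate for $B(s)$ is essentially automatic.
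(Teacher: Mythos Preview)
Your proposal is correct and follows essentially the same route as the paper's own proof: both use Theorem \ref{thm:euler_factors} to identify the good Euler factors of $\widehat{w_s}(h)$, compare them to the first-order expansion $\det(I-\rho_h|_{\C o}(\Fr_p)p^{-\ind(o)s})^{-1}=1+{\rm tr}(\rho_h|_{\C o}(\Fr_p))p^{-\ind(o)s}+O(p^{-2\ind(o)s})$ of the local $L$-factors, and observe that all cross terms carry exponent at least $2a(T)$. The only cosmetic difference is that the paper factors $E_p(s)$ into $\prod_o(1+{\rm tr}(\cdots)p^{-\ind(o)s})$ first and then matches each factor to an $L$-factor, whereas you expand $\prod_o L_p$ directly and divide; these are the same computation in reverse order. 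One small tightening: the bad local factors of $B(s)$ are in fact entire (each is a finite Dirichlet polynomial from $\widehat{w_s}(h)$ times the polynomial $\det(I-\rho_h|_{(\C o)^{I_p}}(\Fr_p)p^{-\ind(o)s})$), not merely meromorphic, which is what guarantees $B(s)$ is genuinely holomorphic on ${\rm Re}(s)>\frac{1}{2a(T)}$.
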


\begin{proof}
    One checks that
    \[
    1 + b_0x^a + b_1x^{a+1} + \cdots + b_Nx^{a+N} = (1+b_0x^{a})(1+b_1x^{a+1})\cdots(1+b_Nx^{a+N})(1 + O(x^{2a}))
    \]
    via the distributive law. This implies that
    \begin{align*}
    &\prod_{p\nmid |T|\disc(\pi)\disc(h)\infty} \sum_{o\subset \Hom((\Q/\Z)^*,T(\pi))} {\rm tr}(\rho_h|_{\C o}(\Fr_p)) p^{-\ind(o)s}\\
    &= \prod_{p\nmid |T|\disc(\pi)\disc(h)\infty}(1+O(p^{-2a(T)s}))\prod_{o\subset \Hom((\Q/\Z)^*,T(\pi))}\prod_{p\nmid |T|\disc(\pi)\infty}\left(1 + {\rm tr}(\rho_h|_{\C o}(\Fr_p)) p^{-\ind(o)s}\right),
    \end{align*}
    as $a(T) = \min \ind(o)$ over all orbits $o\ne \{1\}$. We also know that
    \[
    \left(1 + {\rm tr}(\rho_h|_{\C o}(\Fr_p)) p^{-\ind(o)s}\right) = \det\left(I + \rho_h|_{\C o}(\Fr_p)p^{-\ind(o)s}\right) (1 + O(p^{-2\ind(o)s})),
    \]
    so that we conclude
    \begin{align*}
        &\prod_{p\nmid |T|\disc(\pi)\disc(h)\infty} \sum_{o\subset \Hom((\Q/\Z)^*,T(\pi))} {\rm tr}(\rho_h|_{\C o}(\Fr_p)) p^{-\ind(o)s}\\
        &= \prod_{p\nmid |T|\disc(\pi)\disc(h)\infty}(1+O(p^{-2a(T)s}))\prod_{o\subset \Hom((\Q/\Z)^*,T(\pi))}\prod_{p\nmid |T|\disc(\pi)\infty}\det\left(I - \rho_h|_{\C o}(\Fr_p)p^{-\ind(o)s}\right)^{-1}.
    \end{align*}
    There are only finitely many primes $p\mid |T|\disc(\pi)\disc(h)\infty$. By including the Euler factors at these places for $\widehat{w_s}(h)$ on the left and for the Artin $L$-functions on the right, we have proven that
    \[
    \widehat{w_s}(h) = B(s)\prod_{o\subseteq \Hom((\Q/\Z)^*,T(\pi))} L(\ind(o)s, \rho_h|_{\C o})
    \]
    where $B(s)$ is an Euler product whose Euler factors are $1+O(p^{-2a(T)s})$ at all but finitely many places. At the other places, the Euler factors of $B(s)$ are given by
    \begin{align*}
    &\left(\frac{1}{|H^0(K_p,T)|}\sum_{f_p\in H^1(K_p,T)}\langle f_p,h|_{G_{K_p}}\rangle_p |\disc(f_p*\pi|_{G_{K_p}})|^{-s}\right)\\
    &\times\prod_{o\subset \Hom((\Q/\Z)^*,T(\pi))}\det\left(I - \rho_h|_{(\C o)^{I_p}}(\Fr_p)p^{-\ind(o)s}\right).
    \end{align*}
    Each Euler factor of $B(s)$ is an entire function, and the fact that all but finitely many of the Euler factors are $1+O(p^{-2a(T)s})$ implies that $B(s)$ converges absolutely on the right halfplane ${\rm Re}(s) > \frac{1}{2a(T)}$.
\end{proof}

Definition \ref{def:twisted_perm_rep} implies that $\rho_h$ is a twisted permutation representation, and therefore each of $\rho_h|_{\C o}$ is induced from a Hecke character. Artin $L$-functions are invariant under inducing the representation, and Hecke $L$-functions are known to satisfy the Artin conjecture. This implies the following meromorphic continuation:

\begin{corollary}\label{cor:poles}
    Let $w_s:H^1(\A_K,T(\pi))\to \C$ be given by $w_s(f) = |\disc(f*\pi)|^{-s}$ for each complex $s\in \C$ and $h\in H^1(K,T(\pi)^*)$. The Dirichlet series $\widehat{w_s}(h)$ has a meromorphic continuation to ${\rm Re}(s) > \frac{1}{2a(T)}$ with poles at $s=1/n$ for each $n\in \ind(T\setminus\{1\})$ of order
    \begin{align*}
    \le &\#\left\{o\subseteq \Hom((\Q/\Z)^*,T(\pi)) : \substack{\displaystyle \rho_h|_{\C o} = \ind_{L}^K(1_L)\text{ for }L\text{ the fixed field}\\ \displaystyle\text{of }\Stab_{G_K}(e)\text{ for some }e\in o}\right\}\\
    &- \sum_{\substack{o\subseteq \Hom((\Q/\Z)^*,T(\pi))\\ \ind(o) < n}} \underset{s=1/n}{\rm ord} L(\ind(o)s, \rho_h|_{\C o}),
    \end{align*}
    with equality if $h=0$. Here, $\underset{s=a}{\rm ord}f(s)$ denotes the order of the zero of $f(s)$ at $s=a$ and $1_L$ denotes the trivial representation of $G_L$. In particular, $\rho_h|_{\C o} = \ind_{L}^K(1_L)$ if and only if $\rho_h|_{\C o}$ is a permutation representation.
\end{corollary}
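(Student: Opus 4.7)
The plan is to take the factorization
\[
\widehat{w_s}(h) = B(s) \prod_{o\subseteq \Hom((\Q/\Z)^*,T(\pi))} L(\ind(o)s, \rho_h|_{\C o})
\]
supplied by Corollary \ref{cor:meromorphic_continuation} and analyze the orders of the individual factors at $s = 1/n$ for each $n \in \ind(T - \{1\})$. Since $B(s)$ is an Euler product that is absolutely convergent on ${\rm Re}(s) > \frac{1}{2a(T)}$, and since every such $n$ satisfies $n < 2a(T)$, the factor $B(s)$ is holomorphic at $s = 1/n$ and contributes nonnegatively to the order there. The substantive work is therefore to compute the order of each Artin L-factor at $s = 1/n$.

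Theorem \ref{thm:euler_factors} guarantees that $\rho_h$ is a twisted permutation representation, so each restriction $\rho_h|_{\C o}$ is a transitive twisted permutation representation. I would apply Lemma \ref{lem:twistedperm} to write
\[
\rho_h|_{\C o} = \ind_{\Stab(e_o)}^{G_K}(\chi_o),
\]
where $\chi_o = \rho_h|_{\Stab(e_o)}|_{\langle \vec{e}_{e_o}\rangle}$ is the one-dimensional character on the line spanned by a chosen base point $\vec{e}_{e_o}$. Invariance of Artin L-functions under induction then identifies $L(s, \rho_h|_{\C o})$ with the Hecke L-function $L(s, \chi_o)$ on the fixed field $L_o = \overline{K}^{\Stab(e_o)}$. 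The final ``In particular'' clause of the statement falls out of this setup: $\chi_o$ is the trivial character if and only if every $g \in \Stab(e_o)$ fixes $\vec{e}_{e_o}$, equivalently the nonzero entry of $\rho_h(g)$ in the $\vec{e}_{e_o}$-column equals $1$ for every $g \in \Stab(e_o)$, which is precisely the condition that $\rho_h|_{\C o}$ be an ordinary (not merely twisted) permutation representation $\ind_{L_o}^K(1_{L_o})$.

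I would then invoke the standard analytic theory of Hecke L-functions --- entire unless $\chi_o$ is trivial, in which case $L(s, \chi_o) = \zeta_{L_o}(s)$ has only a simple pole at $s = 1$, and nonvanishing on ${\rm Re}(s) \geq 1$ --- to compute the order of each factor at $s = 1/n$. Orbits with $\ind(o) = n$ and $\chi_o = 1$ each contribute a simple pole; orbits with $\ind(o) = n$ and $\chi_o \ne 1$ are holomorphic and nonzero since $L(1, \chi_o) \ne 0$; orbits with $\ind(o) > n$ are evaluated in the region of absolute convergence of $L(s, \chi_o)$ and so are nonvanishing; and orbits with $\ind(o) < n$ are holomorphic but may have zeros in the critical strip, producing the subtracted sum. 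Enlarging the count of permutation-representation orbits by dropping the constraint $\ind(o) = n$ only weakens the bound (additional such orbits contribute neither poles nor zeros at $s = 1/n$), matching the formula in the statement; combined with the nonnegative order of $B(s)$, this yields the claimed inequality.

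For the equality assertion when $h = 0$, the plan is to return to the explicit Euler factors of $B(s)$ derived in the proof of Corollary \ref{cor:meromorphic_continuation}. When $h = 0$ the local Tate pairing $\langle f_p, 0\rangle_p$ is identically $1$, so each bad-prime Euler factor of $B(s)$ reduces to a polynomial in $p^{-s}$ with nonnegative coefficients and constant term $1$, and one verifies that these are nonvanishing at $s = 1/n$ while simultaneously the structure of $\rho_0$ forces the count $\#\{o : \chi_o = 1\}$ and the tight count to coincide. The main obstacle I expect is precisely this bad-prime bookkeeping for $B(s)$ together with the explicit description of $\rho_0$ via Definition \ref{def:twisted_perm_rep}; once that is in place, the rest of the argument is a mechanical consequence of standard facts about Hecke L-functions applied to the twisted-permutation decomposition coming from Lemma \ref{lem:twistedperm}.
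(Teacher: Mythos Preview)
Your proposal is correct and takes essentially the same approach as the paper, which cites Corollary~\ref{cor:twisted_perm_mero} (the packaged form of your Lemma~\ref{lem:twistedperm} argument) for the $L$-function contribution and then argues that $B(s)$ can only lower the pole order. One minor correction for the $h=0$ equality step: the bad-prime Euler factors of $B(s)$ are not simply polynomials in $p^{-s}$ with nonnegative coefficients and constant term $1$---they also carry the factors $\det(I-\rho_h|_{(\C o)^{I_p}}(\Fr_p)p^{-\ind(o)s})$ from the $L$-function normalization---so the paper instead observes that these determinant factors are zero-free on ${\rm Re}(s)>0$, while the remaining sum $\frac{1}{|H^0(K_p,T)|}\sum_{f_p}|\disc(f_p*\pi|_{G_{K_p}})|^{-s}$ is strictly positive for real $s$ when $h=0$, which is the precise form of your nonvanishing argument.
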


We note that it is conjectured that Artin $L$-functions satisfy GRH, and therefore have no zeros with real part larger than 1/2. Under GRH, the second term in Corollary \ref{cor:poles} would vanish. However, this is unproven so we need to be careful to include the possibility that other real zeros exist. In particular, it is possible for a Siegel zero to affect the order of poles to the left of $s=1/a(T)$.

Theorems \ref{thm:generating_mero} and \ref{thm:generating_mero_twist} follow immediately from Corollaries \ref{cor:mero_moroz} and Corollary \ref{cor:poles} together with the sieve to surjective coclasses. The most that the sieve to surjective coclasses can do is (partially) cancel out poles, so the upper bound on the orders of the poles is invariant under the sieve.

\begin{proof}[{Proof of Corollary \ref{cor:poles}}]
    The region of meromorphicity is immediate from Corollary \ref{cor:meromorphic_continuation}. The poles all come from the Artin $L$-functions, and the order of the pole $s=1/n$ in the product
    \[
    \prod_{o\subseteq \Hom((\Q/\Z)^*,T(\pi))} L(\ind(o)s, \rho_h|_{\C o})
    \]
    is precisely
    \begin{align*}
    &\#\left\{o\subseteq \Hom((\Q/\Z)^*,T(\pi)) : \substack{\displaystyle \rho_h|_{\C o} = \ind_{L}^K(1_L)\text{ for }L\text{ the fixed field}\\ \displaystyle\text{of }\Stab_{G_K}(e)\text{ for some }e\in o}\right\}\\
    &- \sum_{\substack{o\subseteq \Hom((\Q/\Z)^*,T(\pi))\\ \ind(o) < n}} \underset{s=1/n}{\rm ord} L(\ind(o)s, \rho_h|_{\C o})
    \end{align*}
    by Corollary \ref{cor:twisted_perm_mero}. The first term comes from contributions of the poles of Artin $L$-functions $L(s,\rho)$ at $s=1$, and the second term comes from the possible contribution of nontrivial zeros of Artin $L$-functions.
    
    There is also a possibility that one of the Euler factors for $B(s)$ at the finitely many places $p\mid |T|\disc(\pi)\disc(h)\infty$ has a zero at $s=1/n$, which may partially cancel out the pole (i.e. lowering the order of the pole). This happens if and only if
    \[
    \frac{1}{|H^0(K_p,T)|}\sum_{f_p\in H^1(K_p,T)}\langle f_p,h|_{G_{K_p}}\rangle_p |\disc(f_p*\pi|_{G_{K_p}})|^{-s}
    \]
    has a zero at $s=1/n$, as the factor $\det(I - \rho_h|_{(\C o)^{I_p}}(\Fr_p)p^{-\ind(o)s})$ is known to be zero-free on the right halfplane ${\rm Re}(s) > 0$. In the case that $h=0$, the factor
    \[
    \frac{1}{|H^0(K_p,T)|}\sum_{f_p\in H^1(K_p,T)}\langle f_p,0\rangle_p |\disc(f_p*\pi|_{G_{K_p}})|^{-s} = \frac{1}{|H^0(K_p,T)|}\sum_{f_p\in H^1(K_p,T)}|\disc(f_p*\pi|_{G_{K_p}})|^{-s}
    \]
    is strictly positive, so that $B(s)$ has no real zeros on the region ${\rm Re}(s)>\frac{1}{2a(T)}$ to partially cancel with the pole.
\end{proof}

These results give the necessary ingredients to use a Tauberian theorem: Corollary \ref{cor:meromorphic_continuation} gives an explicit meromorphic continuation, from which bounds in vertical strips can be obtained, and Corollary \ref{cor:poles} gives the location and orders of the poles that will produce the asymptotic main and lower order terms.

\section{Parametrizing the Local Tate Pairings at Tame Places}\label{sec:local_Tate}

The local Tate pairings are examples of a cup product. In order to construct a single representation that unifies these pairings, we will need a generalized version of the cup product.

\begin{definition}
    Let $T$ be a Galois module, $T^* = \Hom(T,\mu)$ the Tate dual, and $\langle,\rangle:T\times T^*\to \mu$ the natural pairing between them. Let $F/K$ be a finite Galois extension containing the fields of definition for $T$ and $T^*$. The \textbf{parametrizing local pairing} is a map
    \[
    [,]:\Hom((\Q/\Z)^*,T) \times (T^*\rtimes \Gal(F/K)) \to \Hom((\Q/\Z)^*,\mu),
    \]
    defined as follows: for each $a\in \Hom((\Q/\Z)^*,T)$ and $g=(g_0,g_1) \in T^*\rtimes \Gal(F/K)$, the pairing is given by
    \[
    [a, g](x) = \langle a(x), g_0\rangle
    \]
    for each $x\in(\Q/\Z)^*$.
\end{definition}

The parametrizing local pairing is built to be a consistent combination of all possible local Tate pairings at the tame places. While the construction does somewhat resemble the usual method of constructing a cup product, it is not immediately obvious why this is the correct notion or how to relate it to local Tate pairings.

The local Tate pairings are made from a cup product which canonically lands in $H^2(K_p,\mu)\cong \Q/\Z$. They are then passed through a non-canonical isomorphism $\Q/\Z \cong \mu$ to get roots of unity. We don't really want the choice of non-canonical isomorphism here to make a difference in our work, and this is consistent with our definition of the parametrizing local pairing. There is a canonical isomorphism between $\Q/\Z$ and $\Hom((\Q/\Z)^*,\mu) = (\Q/\Z)^{**}$ given in the usual way:
\[
r \mapsto (\alpha \mapsto \alpha(r)).
\]
We will allow ourselves to take canonical isomorphisms for granted, and treat the parametrizing local pairing as landing in $\Q/\Z$.

The primary result of this section is that the parametrizing local pairing specializes to (part of) the local Tate pairing at \emph{each} place $p\nmid |T|\infty$ which is unramified in $F/K$.

\begin{theorem}[Parametrizing the Local Tate Pairings]\label{thm:parametrizing}
Let $T$ be a Galois module over $K$, $T^*=\Hom(T,\mu)$ the Tate dual module, and $F/K$ any finite Galois extension that contains the fields of definition for $T$ and $T^*$. For any place $p\nmid |T|\infty$ which is unramified in $F/K$, the following diagram commutes:
\[
\begin{tikzcd}
    H^1(K_p,T)\times Z^1_{ur}(K_p,T^*) \rar{1\times \inf}\dar{\res\times 1} & H^1(K_p,T)\times H^1(K_p,T^*)\rar{\langle,\rangle_p} &\Q/\Z\dar[equals]\\
    H^1(I_p,T)\times Z^1_{ur}(K_p,T^*) \rar{\iota_p^*\times {\rm ev}_{p}} &\Hom((\Q/\Z)^*,T)\times (T^*\rtimes \Gal(F/K)) \rar{[,]} &\Q/\Z
\end{tikzcd}
\]
where
\begin{enumerate}[(a)]
    \item $\inf$ is taken to be the composition of the coboundary quotient $Z^1_{ur}(K_p,T^*) \to H_{ur}^1(K_p,T^*)$ with inflation $H_{ur}^1(K_p,T^*)\to H^1(K_p,T^*)$,
    \item $\langle,\rangle_p$ is the local Tate pairing,
    \item $\iota_p:(\Q/\Z)^* \to I_p^{\rm tame}$ is the canonical quotient map, given by the isomorphism of the prime-to-$p$ part of $(\Q/\Z)^*$ with $I_p^{\rm tame}$, and
    \item ${\rm ev}_{p}:Z^1_{ur}(K_p,T^*) \to T^*\rtimes \Gal(F/K)$ is a modified evaluation map ${\rm ev}_p(h) = h(\Fr_p)\pi(\Fr_p)$, where $\pi:G_K\to \Gal(F/K)$ is the canonical quotient map.
\end{enumerate}

Moreover, the parametrizing local pairing $[,]$ is the unique map with this property.
\end{theorem}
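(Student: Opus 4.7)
The plan is to unravel the local Tate pairing at a tame place through the Hochschild--Serre spectral sequence for $I_p \trianglelefteq G_{K_p}$ with $\mu$-coefficients, and show that, under the canonical tame identifications, it reduces to exactly the formula defining $[,]$. Uniqueness will then follow from Chebotarev density plus the surjectivity statements in inflation--restriction.

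I begin with the tame reductions. Since $p$ is unramified in $F/K$ and $T, T^*$ are defined over $F$, the inertia subgroup $I_p$ acts trivially on both; similarly $p \nmid |T|$ forces $I_p$ to act trivially on $\mu_{|T|}$. Restriction to inertia therefore turns cocycles into continuous homomorphisms from $I_p^{\mathrm{tame}} \cong \prod_{\ell \ne p}\Z_\ell(1)$, which is canonically identified with the prime-to-$p$ part of $(\Q/\Z)^*$ via $\iota_p$. Under these identifications, $a := \iota_p^*\res(f) \in \Hom((\Q/\Z)^*, T)$, an unramified $h$ is determined by $h(\Fr_p) =: g_0 \in T^*$, and $\mathrm{ev}_p(h) = g_0 g_1$ with $g_1 = \pi(\Fr_p)$. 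The desired commutativity thus reduces to showing $\mathrm{inv}_p(f \cup h) = [a, g_0 g_1]$ under the canonical isomorphism $\Hom((\Q/\Z)^*, \mu) \cong \Q/\Z$.

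The heart of the argument is a direct cocycle computation. Apply Hochschild--Serre for $I_p \trianglelefteq G_{K_p}$ with coefficients in $\mu_{|T|}$: with $G_{K_p}/I_p \cong \hat{\Z}$ and the vanishing $H^2(\hat{\Z}, \mu^{I_p}) = 0 = H^2(I_p^{\mathrm{tame}}, \mu_{|T|})$ (the latter by cohomological dimension one of $I_p^{\mathrm{tame}}$ for prime-to-$p$ coefficients), the edge map gives an isomorphism
\[
H^2(K_p, \mu_{|T|}) \xrightarrow{\sim} H^1\bigl(\hat{\Z},\, \Hom(I_p^{\mathrm{tame}}, \mu_{|T|})\bigr).
\]
The $\hat{\Z}$-action on $\Hom(I_p^{\mathrm{tame}}, \mu_{|T|})$ is trivial because both $I_p^{\mathrm{tame}}$ and $\mu_{|T|}$ are Tate twists $\Z(1)$-modules on which $\Fr_p$ scales by $q_p$, so the Hom module is canonically $\Hom((\Q/\Z)^*, \mu)$ with trivial action, and $H^1$ of a trivial $\hat{\Z}$-module is the module itself. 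Now applying the standard cup-product formula $(f \cup h)(\sigma_1, \sigma_2) = \langle f(\sigma_1),\, \sigma_1 h(\sigma_2)\rangle$, together with $h|_{I_p} = 0$ and the trivial $I_p$-action on $T^*$, the only surviving $(1,1)$-component reads
\[
(f \cup h)(\iota_p(x), \tilde{\Fr}_p) = \langle a(x),\, g_0\rangle
\]
for any choice of Frobenius lift $\tilde{\Fr}_p$ and any $x \in (\Q/\Z)^*$, which is exactly $[a, g_0 g_1](x)$. The final compatibility of $\mathrm{inv}_p$ with the tame identification is the classical tame-symbol description of the local invariant, which closes the argument.

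For uniqueness, suppose $[,]'$ makes the diagram commute at every admissible $p$. By Chebotarev every $g_1 \in \Gal(F/K)$ arises as $\pi(\Fr_p)$ for infinitely many $p$ unramified in $F/K$; for each fixed such $p$, inflation--restriction makes $f \mapsto a$ surjective onto the $G_{K_p}/I_p$-invariant homomorphisms, and unramified cocycles with prescribed $h(\Fr_p) = g_0$ are constructed directly. Varying $p$ (hence varying $q_p$ modulo any fixed integer) produces enough equivariance classes to cover all of $\Hom((\Q/\Z)^*, T) \times (T^* \rtimes \Gal(F/K))$, and since $[,]$ factors through projection to $T^*$, these constraints pin $[,]'$ down to a single map. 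The principal technical obstacle is the cochain-level tracking through the spectral sequence --- verifying that only $E_2^{1,1}$ contributes, managing Frobenius lifts and section choices, and confirming that the canonical identifications compose correctly with $\mathrm{inv}_p$. The conceptual content is essentially classical (tame Tate pairing equals tame symbol), but packaging it into the uniform formula $[a, g](x) = \langle a(x), g_0\rangle$ is what makes the remainder of the paper possible.
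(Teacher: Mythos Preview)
Your approach is essentially the same as the paper's: both reduce the Tate pairing to the $E_2^{1,1}$-term of the Hochschild--Serre spectral sequence for $I_p \trianglelefteq G_{K_p}$ using the vanishing of $E_2^{2,0}$ and $E_2^{0,2}$, compute the relevant cup product at the cocycle level, and invoke Chebotarev for uniqueness. The paper organizes the spectral sequence step via the seven-term exact sequence (making the isomorphism $H^2(K_p,\mu_e)\cong E_2^{1,1}(\mu_e)$ explicit rather than citing the tame-symbol description of $\mathrm{inv}_p$ as you do), and frames uniqueness through the full map $\iota_p^*\times\mathrm{ev}_p$ on $H^1(I_p,T)\times Z^1_{ur}$ rather than through the $\Fr_p$-invariants, but these are packaging differences, not substantive ones.
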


It is important to note that while the top row factors through the coboundary relation, the bottom row \emph{does not}. This is because the left-hand morphism $\res\times 1$ is not surjective, and instead has image
\[
H^1(I_p,T)^{G_{K_p}/I_p}\times Z^1_{ur}(K_p,T^*).
\]
The bottom row restricted to this subgroup will factor through the coboundary relation by a diagram chase. In fact, the composite map
\[
[\iota_p^*,{\rm ev}_p]:H^1(I_p,T)^{G_{K_p}/I_p}\times H^1_{ur}(K_p,T^*) \to \Q/\Z
\]
is equal to the cup product on the Hochschild--Serre spectral sequence
\[
E_2^{p,q}(M):H^p(G_{K_p}/I_p, H^q(I_p,M))\Rightarrow H^{p+q}(K_p,M)
\]
in dimension $E_2^{0,1}(T)\otimes E_2^{1,0}(T^*)\to E_2^{1,1}(\mu)$. The cup product on cohomology is extended to a cup product on spectral sequences of the form
\[
E_2^{p,q}\times E_2^{p',q'} \to E_2^{p+p',q+q'}.
\]
For some references that include a treatment of cup products of spectral sequences, see \cite{hochschild-serre1953,hutchings2011}.

This observation suggests that Theorem \ref{thm:parametrizing} has a short proof using the theory of spectral sequences, and indeed it does. To assist the unfamiliar reader with this proof (and with the use of Theorem \ref{thm:parametrizing}), we have reduced the use of spectral sequences to only the following four properties:
\begin{enumerate}
    \item The definition of the cup product
    \[
    H^1(N,A)^{G/N} \times H^1(G/N,B) \to H^1(G/N, H^1(N,A\otimes B))
    \]
    for $N\normal G$, given by
    \[
    (f\cup h)(gN,n) = f(gN)\otimes h(n).
    \]
    This is induced from the usual cup product on $H^1$, and can be technically be used without refrence to spectral sequences.
    \item Naturality of the cup product with respect to inflation and restriction maps. This is again inherited from the usual cup product on $H^1$.
    \item Uniqueness of the cup product on spectral sequences is used to prove uniqueness of $[,]$ in satisfying Theorem \ref{thm:parametrizing}, although we do not make use of this uniqueness anywhere else in the paper.
    \item The seven term exact sequence extending the inflation-restriction sequence. This sequence is produced from the Hochschild--Serre spectral sequence, although \cite{dekimpe-hartl-wauters2012} give an elemntary treatment with explicit descriptions of each morphism.
\end{enumerate}

\begin{proof}
    The pairing $\langle,\rangle:T\times T^*\to \mu$ always has image in the $e^{\rm th}$ roots of unity $\mu_e$ for $e = |T|$. We will use $\mu_e$ throughout instead of $\mu$, noting that $T^* = \Hom(T,\mu) = \Hom(T,\mu_e)$ as well, as it is convenient that the Galois module $\mu_e$ is unramified at all but finitely many primes.
    
    That $[\iota_p^*,{\rm ev}_p]$ equals the cup product on spectral sequences follows immediately from the definitions. Let $\overline{f}\in H^1(I_p,T)^{G_{K_p}/I_p} = E_2^{0,1}(T)$ and choose a lift $f\in Z^1(I_p,T)$. Likewise, let $h\in Z^1_{ur}(K_p,T^*)$, reducing to $\overline{h}\in H^1_{ur}(K_p,T^*)=E_2^{1,0}(T^*)$. Then
    \begin{align*}
        [\iota_p^*(\overline{f}),{\rm ev}_p(h)](x) &= \langle \overline{f}\iota_p(x), h(\Fr_p)\rangle\\
        &= (f\cup h)(\iota_p(x),\Fr_p)
    \end{align*}
    agrees with the cup product on the level of crossed homomorphisms, landing in
    \begin{align*}
    \Hom(G_{K_p}/I_p,H^1((\Q/\Z)^*,\mu_e)) &= (\iota_p^*)_*\left(\Hom(G_{K_p}/I_p,H^1(I_p^{\rm tame},\mu_e))\right)\\
    &=(\iota_p^*)_*(E_2^{1,1}(\mu_e)),
    \end{align*}
    which is canonically isomorphic to a subgroup of $\Q/\Z$ (specifically the subgroup $\frac{1}{e}\Z/\Z$).

    Define a function
    \[
    \alpha:Z^2(K_p,\mu_e) \to E_2^{1,1}(\mu_e) = H^1(G_{K_p}/I_p,H^1(I_p,\mu_e))
    \]
    by restriction in the first coordinate to $I_p$. We claim that it suffices to prove that $\alpha$ factors through the coboundary relation.
    
    Suppose this is the case and write $\overline{\alpha}$ for the induced map on $H^2(K_p,\mu_e)$. Consider the following diagram:
    \[
    \begin{tikzcd}
    H^1(K_p,T)\times Z^1_{ur}(K_p,T^*) \rar{1\times \inf}\dar{\res\times 1} & H^1(K_p,T)\times H^1(K_p,T^*)\rar{\langle,\rangle_p} &H^2(K_p,\mu_e)\dar{\overline{\alpha}}\\
    H^1(I_p,T)^{G_{K_p}/I_p}\times Z^1_{ur}(K_p,T^*) \rar\dar[hook] & H^1(I_p,T)^{G_{K_p}/I_p}\times H^1_{ur}(K_p,T^*)\rar{\cup} &E_2^{1,1}(\mu_e)\dar{(\iota_p^*)_*}\\
    H^1(I_p,T)\times Z^1_{ur}(K_p,T^*) \rar{\iota_p^*\times {\rm ev}_{p}} &\Hom((\Q/\Z)^*,T)\times (T^*\rtimes \Gal(F/K)) \rar{[,]} &\Q/\Z.
    \end{tikzcd}
    \]
    We just proved that the lower box commutes. The upper box commutes by naturality of the cup product, so all together the existence of $\overline{\alpha}$ proves the diagram in the Theorem statement commutes.

    In fact, not only does $\alpha$ factor through the coboundary relation, $\overline{\alpha}$ is an isomorphism. This follows from the seven term exact sequence associated to the Hochschild--Serre spectral sequence with coefficients in $\mu_e$ \cite{dekimpe-hartl-wauters2012}, given by
    \[
    \begin{tikzcd}
    0 \rar & E_2^{1,0}(\mu_e) \rar & H^1(K_p,\mu_e) \rar & E_2^{0,1}(\mu_e) \arrow{dlll}\\
    E_2^{2,0}(\mu_e) \rar & \ker\left(H^2(G_{K_p},\mu_e) \to E_2^{0,2}(\mu_e)\right) \rar & E_2^{1,1}(\mu_e) \rar & E_2^{3,0}(\mu_e).
    \end{tikzcd}
    \]
    This is, in particular, a lengthening of the inflation-restriction sequence. The map
    \[
    \ker\left(H^2(G_{K_p},\mu_e) \to E_2^{2,0}(\mu_e)\right) \to E_2^{1,1}(\mu_e)
    \]
    is given by restriction of the first coordinate of the representative 2-cocycles, and is exactly what we need to consider. If $\ker(H^2(G_{K_p},\mu_e)\to E_2^{2,0}(\mu_e))$ were equal to all of $H^2(K_p,\mu_e)$, we would be done as this map would be exactly $\overline{\alpha}$.

    We note that $G_{K_p}/I_p = \langle \Fr_p\rangle \cong\hat{Z}$ is a pro-free group, which is the completion of a free group $\Z$ with cohomological dimension $1$. Our assumption that $p\nmid |T|=e$ implies that
    \[
    \begin{tikzcd}
    H^n(I_p,\mu_e) &H^n(I_p^{\rm tame},\mu_e)\lar{\inf} \rar[hook] &H^n(\hat{\Z},\mu_e)
    \end{tikzcd}
    \]
    are isomorphisms. We again know that $\hat{\Z}$ is the completion of $\Z$ which has cohomological dimension $1$. Because $\mu_e$ is finite, we know that $H^{n}(\hat{\Z},\mu_e) \cong H^{n}(\Z,\mu_e)$ so that we can take advantage of the cohomological dimension of $\Z$. All together, this implies $E_2^{2,0}(\mu_e)=0$, $E_2^{0,2}(\mu_e)=0$, and $E_2^{3,0}(\mu_e) = 0$. The seven term short exact sequence then simplifies to
    \[
    \begin{tikzcd}
    0 \rar & E_2^{1,0}(\mu_e) \rar & H^1(K_p,\mu_e) \rar & E_2^{0,1}(\mu_e) \arrow{dlll}\\
    0 \rar & H^2(G_{K_p},\mu)\rar{\overline{\alpha}} & E_2^{1,1}(\mu_e) \rar & 0,
    \end{tikzcd}
    \]
    proving that $\alpha$ does factor through the coboundary relation.

    Uniqueness of $[,]$ follows from the fact that
    \[
    \iota_p^*\times {\rm ev}_p:H^1(I_p,T)\times Z^1_{ur}(K_p,T^*) \to \Hom((\Q/\Z)^*,T)\times (T^*\rtimes \Gal(F/K))
    \]
    is injective with image given by the fiber $q^{-1}(\Fr_p(F/K))$ of the quotient map
    \[
    q:\Hom((\Q/\Z)^*,T)\times (T^*\rtimes \Gal(F/K)) \to \Gal(F/K)
    \]
    The Chebotarev density theorem implies that every element (up to conjugation) of $\Gal(F/K)$ is given by Frobenius of some unramified place, so it follows that these images cover $\Hom((\Q/\Z)^*,T)\times (T^*\rtimes \Gal(F/K))$. Thus, $[,]$ is determined uniquely by the compositions $[\iota_p^*,{\rm ev}_p]$ at finitely many places. We already proved that the value of $[\iota_p^*,{\rm ev}_p]$ was determined by the cup product on the Hochschild--Serre spectral sequence, which is unique.
\end{proof}

\section{Constructing the Galois representation}\label{sec:Galois_rep}

We are now ready to construct the Galois representation $\rho_h$ used in Theorem \ref{thm:euler_factors}.

\begin{definition}\label{def:twisted_perm_rep}
    Let $h\in H^1(K,T^*)$, choose a representative crossed homomorphism for $h$, let $F/K$ be a finite Galois extension containing the fields of definition of $T$ and $T^*$, and let $\pi:G_K\to \Gal(F/K)$ be the canonical quotient map. Define the Galois representation
    \[
    \rho_h: G_K \to {\rm GL}(\C[\Hom((\Q/\Z)^*,T)])
    \]
    by the induced action on each of the basis elements $a\in \Hom((\Q/\Z)^*,T)$
    \[
    \rho_h(g)a = [g.a, (h*\pi)(g)] (g.a),
    \]
    where we choose an identification of $\Hom((\Q/\Z)^*,\mu)\cong\Q/\Z$ with $\mu$ so that the coefficient is a complex number.
\end{definition}

Firstly, we should confirm that $\rho_h$ is really a homomorphism. The parametrizing local pairing is not quite a bilinear map, instead it satisfies a type of twisted bilinearity. We prove this in the following lemma, which we can use to check that $\rho_h$ is a homomorphism.

\begin{lemma}\label{lem:twisted_bilinear}
    Let $a,b\in \Hom((\Q/\Z)^*,T)$ and $f=(f_0,f_1),g=(g_0,g_1)\in T^*\rtimes \Gal(F/K)$. Then
    \begin{enumerate}
        \item[(i)] $[ab,f] = [a,f][b,f]$, and
        \item[(ii)] $[a,fg] = [a,f][f_1^{-1}.a,g]$.
    \end{enumerate}
\end{lemma}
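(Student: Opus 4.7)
The plan is to verify both identities pointwise, i.e.\ by evaluating both sides (which are elements of $\Hom((\Q/\Z)^*,\mu)$) at an arbitrary $x\in(\Q/\Z)^*$, and then reducing each identity to the two structural properties of the underlying duality pairing $\langle,\rangle\colon T\times T^*\to\mu$: bilinearity in each slot, and Galois equivariance $\langle \sigma.t,\sigma.s\rangle=\sigma.\langle t,s\rangle$. Part (i) is then essentially immediate from the definition: since $\langle-,f_0\rangle$ is a group homomorphism in its first argument,
\[
[ab,f](x)=\langle a(x)b(x),f_0\rangle=\langle a(x),f_0\rangle\cdot\langle b(x),f_0\rangle=[a,f](x)\cdot[b,f](x),
\]
and equality of $[ab,f]$ and $[a,f][b,f]$ as homomorphisms $(\Q/\Z)^*\to\mu$ follows.

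For part (ii), I would first expand the product in the semidirect product explicitly as $fg=(f_0\cdot(f_1.g_0),\,f_1g_1)$, where the dot denotes the Galois action on $T^*$. Bilinearity of $\langle,\rangle$ in the second argument then splits the left-hand side as
\[
[a,fg](x)=\langle a(x),f_0\rangle\cdot\langle a(x),f_1.g_0\rangle=[a,f](x)\cdot\langle a(x),f_1.g_0\rangle,
\]
so the content of the lemma is the identity $\langle a(x),f_1.g_0\rangle=\langle(f_1^{-1}.a)(x),g_0\rangle$. I would unpack the $G_K$-action on $\Hom((\Q/\Z)^*,T)$ as $(f_1^{-1}.a)(x)=f_1^{-1}.a(f_1.x)$ and apply Galois equivariance of $\langle,\rangle$ with $\sigma=f_1^{-1}$; both sides then rewrite as $\langle f_1^{-1}.a(x),g_0\rangle^{\chi(f_1)}$, where $\chi$ is the cyclotomic character. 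On the right the factor of $\chi(f_1)$ comes from the action of $f_1$ on $(\Q/\Z)^*$ together with the fact that $a$ is a homomorphism, so $a(f_1.x)=a(x)^{\chi(f_1)}$; on the left it comes from the cyclotomic action on $\mu$.

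The lemma is shallow, but the one point worth being careful about is exactly this cancellation of two cyclotomic factors coming from opposite ends of the pairing. I expect this matching to be the main (and really only) obstacle: it is the algebraic shadow of the canonical identification $\Hom((\Q/\Z)^*,\mu)\cong\Q/\Z$, which absorbs the would-be cyclotomic twist on the output of the parametrizing pairing. Once that bookkeeping is done, the rest is a routine verification of semidirect-product arithmetic and the bilinearity of $\langle,\rangle$.
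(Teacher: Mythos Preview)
Your proposal is correct and follows essentially the same approach as the paper's proof: both verify (i) and (ii) pointwise by expanding the semidirect product as $fg=(f_0\cdot(f_1.g_0),f_1g_1)$, splitting via bilinearity, and then matching the two cyclotomic factors that arise from the Galois action on $(\Q/\Z)^*$ and on $\mu$. The only cosmetic difference is that where you invoke the abstract Galois equivariance $\langle\sigma.t,\sigma.s\rangle=\sigma.\langle t,s\rangle$, the paper unpacks the action on $T^*$ explicitly as $(f_1.g_0)(y)=g_0(f_1^{-1}.y)^{\chi(f_1)}$; these are the same computation.
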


Using the second property, we can confirm that
\begin{align*}
    \rho_h(xy)a &= [xy.a, (h*\pi)(xy)] (xy.a)\\
    &=[xy.a, (h*\pi)(x)(h*\pi)(y)] (xy.a)\\
    &=[xy.a, (h*\pi)(x)][(h*\pi)(x)_1^{-1}.(xy.a),(h*\pi)(y)] (xy.a).
\end{align*}
The action of $G_K$ on $\Hom((\Q/\Z)^*,T)$ factors through $\Gal(F/K)$, which implies $(h*\pi)(x)_1^{-1}$ acts in the same way as $x^{-1}$. Thus
\begin{align*}
    \rho_h(xy)a &=[xy.a, (h*\pi)(x)][x^{-1}.(xy.a),(h*\pi)(y)] (xy.a)\\
    &=[xy.a, (h*\pi)(x)][y.a,(h*\pi)(y)] (xy.a)\\
    &=\rho_h(x)[y.a,(h*\pi)(y)] (y.a)\\
    &=\rho_h(x)\rho_h(y)a.
\end{align*}
Thus, $\rho_h$ really is a Galois representation.

\begin{proof}[Proof of Lemma \ref{lem:twisted_bilinear}]
    We may prove the lemma directly. Let $x\in (\Q/\Z)^*$. Then
    \begin{align*}
        [ab,f](x) &= \langle (ab)(x), f_0\rangle\\
        &= \langle a(x)b(x), f_0\rangle\\
        &= \langle a(x), f_0\rangle\langle b(x), f_0\rangle\\
        &=[a,f](x)[b,f](x).
    \end{align*}

    For the other coordinate, we evaluate
    \begin{align*}
        [a,fg](x) &= \langle a(x), (fg)_0\rangle.
    \end{align*}
    The semi-direct product structure implies that
    \begin{align*}
        ((fg)_0, (fg)_1) &= (f_0,f_1)(g_0,g_1)\\
        &=(f_0 (f_1.g_0), f_1g_1).
    \end{align*}
    The action on $g_0\in T^*$ is given by
    \[
    (f_1.g_0)(y) = g_0(f_1^{-1}.y)^{\chi(f_1)}
    \]
    for $\chi:G_K\to \hat{\Z}^{\times}$ the cyclotomic character. Thus,
    \begin{align*}
        [a,fg](x) &= \langle a(x), f_0 (f_1.g_0)\rangle\\
        &=\langle a(x), f_0\rangle\langle a(x), f_1.g_0\rangle\\
        &=\langle a(x), f_0\rangle\langle f_1^{-1}.(a(x)), g_0\rangle^{\chi(f_1)}.
    \end{align*}
    However, $\Gal(F/K)$ acts on $\Hom((\Q/\Z)^*,T)$ in a similar way:
    \[
    (f_1^{-1}.a)(x) = f_1^{-1}.(a(x^{\chi(f_1)})).
    \]
    Moving the cyclotomic character all the way to the inside, we compute
    \begin{align*}
        [a,fg](x) &= \langle a(x), f_0\rangle\langle (f_1^{-1}.a)(x), g_0\rangle\\
        &= [a,f](x)[f_1^{-1}.a,g](x).
    \end{align*}
\end{proof}

This is all the setup we need to prove Theorem \ref{thm:euler_factors}.

\begin{proof}[Proof of Theorem \ref{thm:euler_factors}]
Choose a representative crossed homomorphism for $h\in H^1(K,T^*)$, and let $p$ be a place of $K$ for which $p\nmid |T|\disc(\pi)\disc(h)\infty$. In particular, if $F/K$ is a finite Galois extension containing the fields of definition for $T$ and $T^*$ then $p$ is unramified in $F/K$.

For each $f_p\in H^1(K_p,T)$, Theorem \ref{thm:parametrizing} implies that
\[
\langle f_p, h|_{G_{K_p}} \rangle_p = [f_p|_{I_p}\circ \iota_p, (h*\pi)(\Fr_p)],
\]
which depends only on the restriction $f_p|_{I_p}$. The same is true for the discriminant, as $p\nmid \disc(\pi)$ implies
\[
|\disc(f_p*\pi|_{G_{K_p}})|^{-s} = p^{-\ind(f_p(\tau))s}
\]
for any generator $\tau\in I_p^{\rm tame}$. Noting that the size of the set of coboundaries is $|B^1(K_p,T)| = \frac{|T|}{|H^0(K_p,T)|}$ and that the fibers of $\res:Z^1(K_p,T)\to Z^1(I_p,T)$ have size $|T|$, this implies
\begin{align*}
    &\frac{1}{|H^0(K_p,T)|}\sum_{f_p\in H^1(K_p,T)}\langle f_p,h|_{G_{K_p}}\rangle_p |\disc(f_p*\pi|_{G_{K_p}})|^{-s}\\
    &= \frac{1}{|T|}\sum_{f_p\in Z^1(K_p,T)}[f_p|_{I_p}\circ \iota_p, (h*\pi)(\Fr_p)] p^{-\ind(f_p(\tau))s}\\
    &= \sum_{f_p\in \res_{I_p}(Z^1(K_p,T))}[f_p\circ \iota_p, (h*\pi)(\Fr_p)] p^{-\ind(f_p(\tau))s}.
\end{align*}
Notice that because $F/K$ is unramified at $p$, $Z^1(I_p,T) = \Hom(I_p,T)$. The restriction is then
\[
\res_{I_p}(Z^1(K_p,T)) = Z^1(I_p,T)^{G_{K_p}/I_p} = \Hom(I_p,T)^{G_{K_p}/I_p}.
\]
We pull back along the map $\iota_p:(\Q/\Z)^*\to I_p^{\rm tame}$ to remove most of the dependence on $p$, showing that this sum is equal to
\[
\sum_{a\in \Hom((\Q/\Z)^*,T)^{G_{K_p}/I_p}}[a, (h*\pi)(\Fr_p)] p^{-\ind(a)s} = \sum_{\substack{a\in \Hom((\Q/\Z)^*,T)\\\Fr_p.a=a}}[a, (h*\pi)(\Fr_p)] p^{-\ind(a)s}.
\]
Partitioning the sum by the $G_K$-orbits $o\subseteq \Hom((\Q/\Z)^*,T)$, this is equal to
\[
\sum_{o\subseteq \Hom((\Q/\Z)^*,T)}\left(\sum_{\substack{a\in o\\\Fr_p.a=a}}[a, (h*\pi)(\Fr_p)]\right)p^{-\ind(o)s}.
\]
The action of $G_K$ on $\Hom((\Q/\Z)^*,T)$ is given by the permutation representation $\overline{\rho_h}$, so that $\Fr_p.a=a$ if and only if $\overline{\rho_h}(\Fr_p)a = a$. By definition, this is equivalent to $a$ being an eigenvector of $\rho_h(\Fr_p)$. The associated eigenvalue is necessarily $[a,(h*\pi)(\Fr_p)]$. Thus,
\[
\sum_{\substack{a\in o\\\Fr_p.a=a}}[a, (h*\pi)(\Fr_p)] = \text{the sum of the eigenvalues of }\rho_h|_{\C o}(\Fr_p) = {\rm tr}(\rho_h|_{\C o}(\Fr_p)).
\]
Thus, we conclude that the Euler factor is given by
\[
\sum_{o\subseteq \Hom((\Q/\Z)^*,T)}{\rm tr}(\rho_h|_{\C o}(\Fr_p))p^{-\ind(o)s}.
\]
\end{proof}

\section{A Power Saving Tauberian Theorem}\label{sec:Tauberian}
We will use the following Tauberian theorem:

\begin{theorem}\label{thm:xi+1}
    Let $a_n$ be a sequence of complex numbers and $\widehat{a}_n$ a sequence of real numbers with $|a_n|\le \widehat{a}_n$. Let $L(s) = \sum a_n n^{-s}$ and $\widehat{L}(s) = \sum \widehat{a}_n n^{-s}$ be the corresponding Dirichlet series with abscissa of absolute convergence $\sigma_a$ and $\widehat{\sigma}_a$ respectively. Suppose that
    \begin{enumerate}[(a)]
        \item $L(s)$ and $\widehat{L}(s)$ have a meromorphic continuation to the right halfplane ${\rm Re}(s) > \widehat{\sigma}_a - \delta$ with at most finitely many poles, and
        \item $|L(s)|,|\widehat{L}(s)| \ll_{\epsilon} (1+|t|)^{\xi+\epsilon}$ for some $\xi\ge 0$, each $\epsilon > 0$, and each $s$ with ${\rm Re}(s) > \widehat{\sigma}_a - \delta$ and sufficiently large imaginary part.
    \end{enumerate}
    Then
    \begin{align*}
        \sum_{n\le X} a_n &= \sum_{j=1}^r \underset{s=s_j}{\rm Res} \left(L(s)\frac{X^s}{s}\right) + O\left(X^{\widehat{\sigma}_a - \frac{\delta}{\xi + 1}+\epsilon}\right),
    \end{align*}
    where $s_1,s_2,...,s_r$ is the sequence of poles of $L(s)/s$ with ${\rm Re}(s_j) > \widehat{\sigma}_a - \frac{\delta}{\xi + 1}$.
\end{theorem}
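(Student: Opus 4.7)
The plan is to apply a smoothed Perron/Mellin inversion — specifically the Riesz--Ces\`aro mean — and then shift the resulting contour integral past the poles $s_1,\ldots,s_r$ onto a vertical line inside the region of meromorphic continuation, with the polynomial growth from hypothesis (b) controlled by the smoothing kernel's denominator. A naive truncated Perron formula with a sharp horizontal cut-off at height $T$ loses too much on the horizontal segments under the bound $|L(s)|\ll(1+|t|)^{\xi+\epsilon}$ and only reaches $X^{\widehat{\sigma}_a-\delta(1-\xi)}$, so the smoothing is essential and is exactly what drives the factor $\xi+1$ in the denominator of the saving.

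Concretely, I would start from the Mellin--Perron representation
\[
S_k(X) = \sum_{n\le X} a_n \frac{(X-n)^k}{k!} = \frac{1}{2\pi i}\int_{c-i\infty}^{c+i\infty} L(s)\frac{X^{s+k}}{s(s+1)\cdots(s+k)}\,ds
\]
for an integer $k>\xi$ and $c = \widehat{\sigma}_a + \epsilon$. The factor $1/\bigl(s(s+1)\cdots(s+k)\bigr)$ supplies decay $(1+|t|)^{-(k+1)}$, so by (b) the integrand decays like $(1+|t|)^{\xi-k-1+\epsilon}$ and the integral converges absolutely on any vertical line inside ${\rm Re}(s)>\widehat{\sigma}_a-\delta$. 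Shifting the contour to $\sigma^* = \widehat{\sigma}_a-\delta+\epsilon'$ picks up residues of $L(s)X^{s+k}/[s(s+1)\cdots(s+k)]$ at $s_1,\ldots,s_r$ and leaves a shifted vertical integral of size $O(X^{\sigma^*+k+\epsilon})$. To recover the sharp counting function I take a $k$-th finite difference $\Delta_h^k$ of $S_k$ with step $h$: up to a normalization constant, $\Delta_h^k S_k(X)/h^k = \sum_{n\le X} a_n + R(X,h)$, where the boundary error is supported on $(X,X+kh)$ and satisfies $|R(X,h)|\le \sum_{X<n\le X+kh}\widehat{a}_n \ll h X^{\widehat{\sigma}_a-1+\epsilon}$ by a companion Perron estimate applied to $\widehat{L}$ (which has nonnegative coefficients, so the elementary Mertens-style sum bound applies). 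Applying $\Delta_h^k/h^k$ to the main residue term reproduces $\sum_j\underset{s=s_j}{\rm Res}\bigl(L(s)X^s/s\bigr)$, while applying it to the $O(X^{\sigma^*+k})$ remainder gives $O(X^{\sigma^*+k}/h^k)$.

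Balancing the two errors $hX^{\widehat{\sigma}_a-1}$ and $h^{-k}X^{\sigma^*+k}$ gives $h=X^{1-\delta/(k+1)}$ and a total error of $X^{\widehat{\sigma}_a-\delta/(k+1)+\epsilon}$; letting $k\downarrow\xi$ yields the stated exponent $\widehat{\sigma}_a-\delta/(\xi+1)+\epsilon$. Any poles with real part in the resulting gap $[\widehat{\sigma}_a-\delta/(\xi+1),\widehat{\sigma}_a-\delta+\epsilon']$ have residues of size $O(X^{\widehat{\sigma}_a-\delta/(\xi+1)+\epsilon})$ which are absorbed into the error, so only poles with ${\rm Re}(s_j)>\widehat{\sigma}_a-\delta/(\xi+1)$ need to be listed explicitly. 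The main technical obstacle I anticipate is making the finite-difference step fully rigorous: one must verify that $\Delta_h^k$ applied to the residue polynomials $X^{s_j}P_j(\log X)$ reconstructs $\underset{s=s_j}{\rm Res}\bigl(L(s)X^s/s\bigr)$ up to lower-order corrections absorbable into $X^{\sigma^*+k+\epsilon}/h^k$, and that the boundary term really is controlled by $\widehat{L}$ — this is the step that genuinely uses the majorant hypothesis $|a_n|\le\widehat{a}_n$. Everything else is careful contour estimation and epsilon bookkeeping, mirroring the structure of Landau's Tauberian theorem as presented in \cite{roux2011} but with the smoothing order $k$ chosen so as to sharpen the exponent from $\delta(1-\xi)$ to $\delta/(\xi+1)$.
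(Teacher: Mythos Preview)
Your overall strategy --- smoothed Perron for $S_k(X)$, contour shift to ${\rm Re}(s)=\widehat{\sigma}_a-\delta+\epsilon'$, then $k$-fold finite differencing to recover the sharp sum --- is exactly the paper's approach, and your identification of where the majorant hypothesis $|a_n|\le \widehat a_n$ enters is correct. But there is a genuine gap at the very last step.

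You obtain an error $X^{\widehat{\sigma}_a-\delta/(k+1)+\epsilon}$ and then write ``letting $k\downarrow\xi$ yields the stated exponent''. This is not allowed: $k$ is a nonnegative integer (both so that $S_k$ makes sense and so that your untruncated integral $\int(1+|t|)^{\xi-k-1+\epsilon}\,dt$ converges), so the best you can plug in is $k=\lfloor\xi\rfloor+1$, giving only Roux's exponent $\widehat{\sigma}_a-\delta/(\lfloor\xi\rfloor+2)$. Your bound $O(X^{\sigma^*+k})$ on the shifted vertical integral is precisely the step that ties $k$ to $\xi$ and loses the improvement.

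The paper's fix is twofold. First, truncate the contour at height $T$ (introducing a second free parameter) so that the tails and horizontal segments contribute $O(h^{-k}X^{\widehat{\sigma}_a+k}T^{\max\{\xi-1,0\}-k})$. Second --- and this is the key point you are missing --- on the shifted vertical segment do \emph{not} bound $\Delta_h^{(k)}X^{s+k}$ trivially by $X^{\sigma^*+k}$. Instead use the identity
\[
\Delta_h^{(k)}\left[\frac{X^{s+k}}{s(s+1)\cdots(s+k)}\right] \;=\; \widetilde{\Delta}_h^{(k)}\left[\frac{X^s}{s}\right] \;=\; O\!\left(\frac{h^k X^{\sigma^*}}{|s|}\right),
\]
where $\widetilde{\Delta}_h$ is the integrated difference $f\mapsto\int_x^{x+h}f$. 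After dividing by $h^k$, the shifted vertical segment contributes $O(X^{\sigma^*}T^{\xi+\epsilon})$, \emph{independent of $k$}. Now one has three errors $O(hX^{\widehat{\sigma}_a-1})$, $O(h^{-k}X^{\widehat{\sigma}_a+k}T^{\max\{\xi-1,0\}-k})$, and $O(X^{\widehat{\sigma}_a-\delta}T^{\xi})$; the middle one can be made negligible by taking $k$ large, and balancing the outer two over $T$ and $h$ gives $X^{\widehat{\sigma}_a-\delta/(\xi+1)+\epsilon}$.
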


Landau originally proved a power savings of the form $O(X^{\widehat{\sigma}_a - \frac{\delta}{\lfloor \xi\rfloor + 2}+\epsilon})$ in \cite{landau1915} under the condition that $L(s)$ is meromorphic on all of $\C$, where $\lfloor \xi\rfloor$ is the floor of $\xi$. In a more recent treatment, Roux \cite[Theorems 13.3 and 13.8]{roux2011} relaxed Landau's hypotheses to produce the power savings $O(X^{\widehat{\sigma}_a - \frac{\delta}{\lfloor \xi\rfloor + 2}+\epsilon})$ under the same hypotheses as Theorem \ref{thm:xi+1} above.

The improvement from $\widehat{\sigma}_a - \frac{\delta}{\lfloor \xi\rfloor + 2}+\epsilon$ to $\widehat{\sigma}_a - \frac{\delta}{\xi + 1}+\epsilon$ is most substantial when $\xi$ is small. In particular, when $\xi=0$ Theorem \ref{thm:xi+1} gives a power savings $O(X^{\widehat{\sigma}_a-\delta+\epsilon})$ that agrees with the region of meromorphicity. This should be expected, as $\xi=0$ means that the vertical bounds of $L(s)$ are negligible and should not affect the asymptotic growth rate.

Our proof is very similar to Roux's, by shifting the contour integral in Perron's formula. Roux uses the following version of Perron's formula:
\[
\int_{0}^X\int_0^{x_1}\cdots \int_0^{x_{k-1}} \sum_{n<x_k} a_n dx_{k}\cdots dx_2dx_1 = \frac{1}{2\pi i} \int_{c-i\infty}^{c+i\infty} L(s) \frac{X^{s+k}}{s(s+1)\cdots (s+k)}ds,
\]
for some $c>\widehat{\sigma}_a$. Roux chooses $k=\lfloor \xi\rfloor + 1$ and proves that most pieces of the shifted contour are of negligible size with this choice when $T$ is sufficiently large. We improve this argument by dealing with all pieces of the shifted contour directly, and by choosing $T$ optimally as a function of $k$. The size of the error term produced by this choice decreases as $k\to \infty$, with $O(X^{\widehat{\sigma}_a-\frac{\delta}{\xi+1} + \epsilon})$ being the limit.

We opt to present the proof of Theorem \ref{thm:xi+1} in its entirety, though a number of portions are the same as those appearing in \cite{roux2011}. We do this for the sake of completeness - it will help to clarify where our improvements occur, and it will allow readers to see an English translation of the components of \cite{roux2011}. We also give more streamlined proofs for certain components.

\subsection{The Difference Operators}

Let $N(x) = N_0(x) = \sum_{n<x} a_n$ and define
\[
N_k(x) = \int_0^x N_{k-1}(u)du.
\]
We make a similar definition for $\widehat{N}_k(x)$ for the sequence $\widehat{a}_n$.

In order to prove Theorem \ref{thm:xi+1}, we will shift the contour in Perron's formula
\[
N_k(x) = \int_{0}^X\int_0^{x_1}\cdots \int_0^{x_{k-1}} \sum_{n<x_k} a_n dx_{k}\cdots dx_2dx_1 = \frac{1}{2\pi i} \int_{c-i\infty}^{c+i\infty} L(s) \frac{X^{s+k}}{s(s+1)\cdots (s+k)}ds.
\]
We need to relate this back to $N(x)$, which we will do via difference operators.

Define
\begin{align*}
    \Delta_y f&: x \mapsto f(x+y)-f(x) & \Delta_y^{(k)} = \underbrace{\Delta_y\circ\cdots \circ\Delta_y}_{k}.
\end{align*}
By definition, the $k^{\rm th}$ derivative $N_k^{(k)}(x) = N(x)$ exists. We will approximate this equality using the $k$-fold difference operator:
\begin{align}\label{eq:Delta_approx_N(x)}
y^{-k}\Delta_y^{(k)} N_k(x) \approx N(x).
\end{align}
The following lemma is an immediate consequence of the Fundamental Theorem of Calculus:

\begin{lemma}\label{lem:FTC}
    Define
    \begin{align*}
        \widetilde{\Delta}_yf&: x\mapsto \int_x^{x+y} f(u)du & \widetilde{\Delta}_y^{(k)} = \underbrace{\widetilde{\Delta}_y\circ\cdots \circ\widetilde{\Delta}_y}_{k}.
    \end{align*}
    Then $\Delta_y^{(k)} f(x) = \widetilde{\Delta}_y^{(k)} f^{(k)}(x)$.
\end{lemma}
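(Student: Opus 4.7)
The plan is to induct on $k$, grounding the base case $k=1$ in the Fundamental Theorem of Calculus and carrying out the inductive step via a short commutativity observation. For the base case, I would simply note that
\[
\widetilde{\Delta}_y f'(x) = \int_x^{x+y} f'(u)\,du = f(x+y) - f(x) = \Delta_y f(x),
\]
which is precisely FTC applied to the antiderivative $f$ of $f'$.

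For the inductive step, the key ingredient is that the two operators $\Delta_y$ and $\widetilde{\Delta}_y$ commute on sufficiently smooth functions. I would verify this directly with a change of variable: for any reasonable $g$,
\[
\Delta_y \widetilde{\Delta}_y g(x) = \int_{x+y}^{x+2y} g(u)\,du - \int_x^{x+y} g(u)\,du = \int_x^{x+y}\bigl[g(u+y)-g(u)\bigr]\,du = \widetilde{\Delta}_y \Delta_y g(x),
\]
where the middle equality uses $u \mapsto u+y$ in the first integral. Granting the identity at level $k-1$, I would then chain together
\begin{align*}
\widetilde{\Delta}_y^{(k)} f^{(k)}(x)
&= \widetilde{\Delta}_y^{(k-1)}\bigl(\widetilde{\Delta}_y f^{(k)}\bigr)(x)
= \widetilde{\Delta}_y^{(k-1)}\bigl(\Delta_y f^{(k-1)}\bigr)(x) \\
&= \Delta_y\bigl(\widetilde{\Delta}_y^{(k-1)} f^{(k-1)}\bigr)(x)
= \Delta_y \Delta_y^{(k-1)} f(x) = \Delta_y^{(k)} f(x),
\end{align*}
invoking the base case applied to $\widetilde{\Delta}_y f^{(k)}$, then the commutativity, then the inductive hypothesis.

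There is no substantive obstacle here: as the excerpt itself signals, the lemma is a direct repackaging of FTC in operator form, with the only real bookkeeping being the commutation of $\Delta_y$ and $\widetilde{\Delta}_y$. In the actual write-up I would expect the argument to occupy only a few lines.
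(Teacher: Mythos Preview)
Your argument is correct. The paper does not write out a proof of this lemma at all, simply declaring it an immediate consequence of the Fundamental Theorem of Calculus; your induction is a perfectly valid way to make that precise, and the commutativity check you include is the only nontrivial bookkeeping (one could alternatively avoid it by noting that $\Delta_y$ commutes with differentiation, so that $\Delta_y f^{(k-1)} = (\Delta_y f)^{(k-1)}$ and the base case may be iterated directly).
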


This gives us an alternative way to write
\[
y^{-k}\Delta_y N_k(x) = y^{-k}\widetilde{\Delta}_y^{(k)}N_0(x) = y^{-k}\int_x^{x+y}\int_{x_1}^{x_1+y} \cdots \int_{x_{k-1}}^{x_{k-1}+y} N(x_k) dx_k\cdots dx_2 dx_1.
\]
By making strategic choices between the difference $\Delta_y$ and the integral difference $\widetilde{\Delta}_y$ at various steps in the proof, we are able to better optimize the error bounds.

The approximation (\ref{eq:Delta_approx_N(x)}) is concretely accomplished via \cite[Proposition 16.16]{roux2011} and \cite[Proposition 16.19]{roux2011}. Roux's results can be cited without modification, but we opt to provide streamlined, translated proofs for the sake of completeness.

\begin{proposition}[{\cite[Proposition 16.16]{roux2011}}]\label{prop:roux16.16}
    Suppose $a_n\ge 0$ and $N(x) = \sum_{n<x} a_n$. Then
    \[
    y^{-k} \Delta_y^{(k)} N_k(X-ky) \le N(X) \le y^{-k} \Delta_y^{(k)}N_k(X)
    \]
\end{proposition}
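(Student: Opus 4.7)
The plan is to combine Lemma \ref{lem:FTC} with the monotonicity of $N$, which is guaranteed by the hypothesis $a_n\ge 0$. By Lemma \ref{lem:FTC} applied with $f=N_k$ (noting that the definition $N_k(x)=\int_0^x N_{k-1}(u)\,du$ makes $N_k$ absolutely continuous with $N_k^{(k)}=N$ almost everywhere), one has
\[
\Delta_y^{(k)}N_k(x) \;=\; \widetilde{\Delta}_y^{(k)} N(x) \;=\; \int_x^{x+y}\!\int_{x_1}^{x_1+y}\!\cdots\int_{x_{k-1}}^{x_{k-1}+y} N(x_k)\,dx_k\cdots dx_1.
\]
An easy induction on $k$ shows that throughout the region of integration the innermost variable is confined to $x\le x_k\le x+ky$.

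By monotonicity of $N$, this yields the pointwise sandwich $N(x)\le N(x_k)\le N(x+ky)$ on the entire region. Since the region has volume $y^k$, integrating termwise gives
\[
y^k N(x) \;\le\; \Delta_y^{(k)}N_k(x) \;\le\; y^k N(x+ky).
\]
Dividing through by $y^k$ and then specializing to $x=X$ (for the upper bound) and to $x=X-ky$ (for the lower bound) produces exactly the two inequalities in the statement.

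I do not expect any real obstacle here: the whole argument is an iterated mean value estimate, and the hypothesis $a_n\ge 0$ is exactly what is needed to turn pointwise bounds on $N$ into bounds on the iterated integral. The only mild technicality is justifying the appeal to Lemma \ref{lem:FTC} at the level of the possibly-discontinuous function $N$, which is handled by the observation that $N_{k-1}$ is absolutely continuous for $k\ge 2$ while $N_0$ is integrable, so $N_k^{(k)}=N$ holds almost everywhere and the iterated fundamental theorem of calculus still applies.
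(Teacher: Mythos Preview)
Your proof is correct and follows essentially the same approach as the paper: both invoke Lemma~\ref{lem:FTC} to rewrite $\Delta_y^{(k)}N_k$ as the iterated integral $\widetilde{\Delta}_y^{(k)}N$, then use monotonicity of $N$ together with the bounds $x\le x_k\le x+ky$ on the innermost variable to sandwich the integral between $y^kN(x)$ and $y^kN(x+ky)$. The only cosmetic difference is that you prove a single two-sided inequality in the parameter $x$ and then specialize, whereas the paper treats the two inequalities separately with $x=X$ and $x=X-ky$ from the start.
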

This implies
\begin{align*}
    y^{-k} \Delta_y^{(k)}N_k(X) &= N(X) + O\left(y^{-k}\Delta_y^{(k)}[N_k(X)-N_k(X-ky)]\right).
\end{align*}
in the case that $a_n\ge 0$.

\begin{proof}[Proof of Proposition \ref{prop:roux16.16}]
    The fact that $a_n\ge 0$ implies that $N(X)$ is an increasing function. Thus,
    \begin{align*}
        \Delta_y^{(k)}N_k(X) &= \widetilde{\Delta}_y^{(k)}N(X)\\
        &=\int_X^{X+y}\int_{x_1}^{x_1+y}\cdots \int_{x_{k-1}}^{x_{k-1}+y} N(x_k)dx_k\cdots dx_2dx_1\\
        &\ge \int_X^{X+y}\int_{x_1}^{x_1+y}\cdots \int_{x_{k-1}}^{x_{k-1}+y} N(X)dx_k\cdots dx_2dx_1\\
        &= y^kN(X),
    \end{align*}
    as the bounds on the integrals force $x_k\ge \cdots \ge x_1\ge X$.

    The other inequality is similar, as
    \begin{align*}
        \Delta_y^{(k)}N_k(X-ky) &= \widetilde{\Delta}_y^{(k)}N(X-ky)\\
        &=\int_{X-ky}^{X-(k-1)y}\int_{x_1}^{x_1+y}\cdots \int_{x_{k-1}}^{x_{k-1}+y} N(x_k)dx_k\cdots dx_2dx_1\\
        &\le \int_{X-ky}^{X-(k-1)y}\int_{x_1}^{x_1+y}\cdots \int_{x_{k-1}}^{x_{k-1}+y} N(X)dx_k\cdots dx_2dx_1\\
        &= y^kN(X),
    \end{align*}
    as the bounds for the integrals force $x_k\le x_{k-1}+y \le \cdots \le x_1+(k-1)y \le X$.
\end{proof}

We next use $\widehat{N}(X)$ to give a bound for $y^{-k}\Delta_y^{(k)}N_k(X) - N(X)$ when $a_n$ is a general sequence of complex numbers. The following proposition plays an analogous role to \cite[Proposition 16.19]{roux2011}, although it is more streamlined. The proof is shorter (not requiring an induction argument), and the result is more immediately applicable to proving Theorem \ref{thm:xi+1}.

\begin{proposition}\label{prop:roux16.19+}
    Suppose $|a_n|\le \widehat{a}_n$ and set $N(x) = \sum_{n<x} a_n$ and $\widehat{N}(x) = \sum_{n<x}\widehat{a}_n$. Then
    \[
    y^{-k} \Delta_y^{(k)}N_k(X) = N(X) + O\left(y^{-k} \Delta_y^{(k)}\widehat{N}_k(X) - \widehat{N}(X)\right).
    \]
\end{proposition}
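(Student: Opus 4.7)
The plan is to imitate the proof of Proposition \ref{prop:roux16.16}, but replace the monotonicity estimate with a triangle-inequality bound controlled by $\widehat{N}$. The key observation is that Lemma \ref{lem:FTC} lets us turn the finite-difference expression into an honest iterated integral of $N$ itself, and then we can pointwise compare the integrand to $\widehat{N}$.

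First I would apply Lemma \ref{lem:FTC} to write
\[
y^{-k}\Delta_y^{(k)} N_k(X) - N(X) = y^{-k}\int_X^{X+y}\!\int_{x_1}^{x_1+y}\!\cdots\!\int_{x_{k-1}}^{x_{k-1}+y} \bigl(N(x_k)-N(X)\bigr)\, dx_k\cdots dx_1,
\]
using that the iterated integral of the constant $N(X)$ over the $k$-dimensional region contributes exactly $y^k N(X)$. Next, since all integration variables satisfy $x_k\ge X$, I would note that
\[
|N(x_k)-N(X)| = \Bigl|\sum_{X\le n<x_k} a_n\Bigr| \le \sum_{X\le n<x_k}|a_n|\le \sum_{X\le n<x_k}\widehat{a}_n = \widehat{N}(x_k)-\widehat{N}(X),
\]
where the only ingredients are the triangle inequality and the hypothesis $|a_n|\le \widehat{a}_n$.

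Substituting this bound into the iterated integral and then re-applying Lemma \ref{lem:FTC} (this time in reverse, to $\widehat{N}$ rather than $N$) yields
\[
\bigl|y^{-k}\Delta_y^{(k)} N_k(X)-N(X)\bigr| \le y^{-k}\widetilde{\Delta}_y^{(k)}\widehat{N}(X) - \widehat{N}(X) = y^{-k}\Delta_y^{(k)}\widehat{N}_k(X) - \widehat{N}(X),
\]
which is exactly the claimed estimate. There is no real obstacle here; the only subtle point is being careful that the region of integration indeed forces $x_k\ge X$ (so that $\widehat{N}(x_k)-\widehat{N}(X)\ge 0$ and the triangle inequality goes in the right direction), and that the constant term $N(X)$ pulls cleanly out of the iterated integral to cancel the $y^{-k}\cdot y^k$ factor — both of which are immediate from the setup in Proposition \ref{prop:roux16.16}.
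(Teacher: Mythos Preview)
Your proposal is correct and follows essentially the same approach as the paper's own proof: apply Lemma \ref{lem:FTC} to convert $\Delta_y^{(k)}N_k$ into an iterated integral of $N$, subtract off the constant $N(X)$, bound $|N(x_k)-N(X)|\le \widehat{N}(x_k)-\widehat{N}(X)$ via the triangle inequality and the hypothesis $|a_n|\le\widehat{a}_n$, and then reassemble using Lemma \ref{lem:FTC} for $\widehat{N}$. The steps, their order, and the key observations you highlight all match the paper's argument.
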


Thus, the error given by Proposition \ref{prop:roux16.16} for $\widehat{N}(X)$ can also be used for $N(X)$.

\begin{proof}[Proof of Proposition \ref{prop:roux16.19+}]
    By Lemma \ref{lem:FTC},
    \begin{align*}
        y^{-k} \Delta_y^{(k)}N_k(X) &= y^{-k} \widetilde{\Delta}_y^{(k)}N(X)\\
        &=y^{-k}\int_X^{X+y}\int_{x_1}^{x_1+y}\cdots \int_{x_{k-1}}^{x_{k-1}+y} N(x_k)dx_k\cdots dx_2dx_1\\
        &=N(X) + y^{-k}\int_X^{X+y}\int_{x_1}^{x_1+y}\cdots \int_{x_{k-1}}^{x_{k-1}+y} N(x_k) - N(X) dx_k\cdots dx_2dx_1\\
        &=N(X) + O\left(y^{-k}\int_X^{X+y}\int_{x_1}^{x_1+y}\cdots \int_{x_{k-1}}^{x_{k-1}+y} |N(x_k) - N(X)| dx_k\cdots dx_2dx_1\right).
    \end{align*}
    We remark that for $x_k\ge X$,
    \begin{align*}
        |N(x_k) - N(X)| &= \left\lvert\sum_{X\le n<X_k}a_n\right\rvert\\
        &\le \sum_{X\le n<x_k}|a_n|\\
        &\le \sum_{X\le n<x_k}\widehat{a}_n\\
        &=\widehat{N}(x_k)-\widehat{N}(X).
    \end{align*}
    Thus
    \begin{align*}
        y^{-k} \Delta_y^{(k)}N_k(X) &= N(X) + O\left(y^{-k}\int_X^{X+y}\int_{x_1}^{x_1+y}\cdots \int_{x_{k-1}}^{x_{k-1}+y} \widehat{N}(x_k) - \widehat{N}(X) dx_k\cdots dx_2dx_1\right)\\
        &= N(X) + O\left(y^{-k} \Delta_y^{(k)}\widehat{N}_k(X) - \widehat{N}(X)\right).
    \end{align*}
\end{proof}

\subsection{Shifting the Contour Integral}

We put the contour shifting argument under a single proposition:

\begin{proposition}\label{prop:contour_shift}
    Let $a_n$ be a sequence of complex numbers with generating Dirichlet series $L(s) = \sum a_n n^{-s}$ with abscissa of absolute convergence $\le \sigma_a$. Suppose that
    \begin{enumerate}[(a)]
        \item $L(s)$ has a meromorphic continuation to the right halfplane ${\rm Re}(s) > \sigma_a - \delta$ with at most finitely many poles, and
        \item $|L(s)|\ll_{\epsilon} (1+|t|)^{\xi+\epsilon}$ for some $\xi\ge 0$, each $\epsilon > 0$, and each $s$ with ${\rm Re}(s) > \sigma_a - \delta$ and sufficiently large imaginary part.
    \end{enumerate}
    Then setting $y = X^{1-\frac{(k-\max\{\xi-1,0\})\delta}{(k+1)\xi+k-\max\{\xi-1,0\}}}$ implies that for all $k$ sufficiently large depending only on $\delta$, $\xi$, and $\epsilon>0$
    \begin{align*}
        y^{-k}\Delta_y^{(k)} N_k(X) &= \sum_{j=1}^r \underset{s=s_j}{\rm Res} \left(L(s)\frac{X^s}{s}\right) + O_{\delta,\xi,\epsilon}\left(X^{\sigma_a - \frac{\delta}{\xi + 1}+\epsilon}\right),
    \end{align*}
    where $s_1,s_2,...,s_r$ is the sequence of poles of $L(s)/s$ with ${\rm Re}(s_j) > \sigma_a - \frac{\delta}{\xi + 1}$.
\end{proposition}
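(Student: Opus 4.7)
The proof follows the classical contour-shifting argument for Perron's formula, with $y$ and the split parameter $T$ both optimized as functions of $k$. Starting from
\[
N_k(X)=\frac{1}{2\pi i}\int_{c-i\infty}^{c+i\infty} L(s)\,\frac{X^{s+k}}{s(s+1)\cdots(s+k)}\,ds\qquad(c>\sigma_a),
\]
I would bring $y^{-k}\Delta_y^{(k)}$ inside the integral and deform the contour to the line $\mathrm{Re}(s)=\sigma':=\sigma_a-\delta+\epsilon'$ for a small $\epsilon'>0$. For $k$ sufficiently large (specifically $k>\xi$), hypothesis (b) combined with the $|t|^{-(k+1)}$ decay of the denominator ensures that the horizontal connecting pieces vanish as $|t|\to\infty$. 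Lemma~\ref{lem:FTC} rewrites $\Delta_y^{(k)}[X^{s+k}]=(s+1)\cdots(s+k)\widetilde{\Delta}_y^{(k)}[X^s]$; this cancellation removes the potential spurious poles at $s=-1,\ldots,-k$, so only the residues coming from poles $s_j$ of $L(s)/s$ with $\mathrm{Re}(s_j)>\sigma'$ are crossed.

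\textbf{Bounding the shifted integral.} On the line $\mathrm{Re}(s)=\sigma'$ I would split the vertical integral at $|t|=T$ and apply two complementary bounds on $\Delta_y^{(k)}[X^{s+k}]$. For $|t|\le T$, Lemma~\ref{lem:FTC} combined with the trivial estimate $|\widetilde{\Delta}_y^{(k)}[X^s]|\le y^k X^{\sigma'}$ cancels the factors $(s+1)\cdots(s+k)$ and leaves an integrand of order $|L(s)|X^{\sigma'}/|s|\ll X^{\sigma'}(1+|t|)^{\xi+\epsilon-1}$, contributing $O(X^{\sigma'} T^{\xi+\epsilon})$. For $|t|>T$, the direct estimate $|\Delta_y^{(k)}[X^{s+k}]|\le 2^k(X+ky)^{\sigma'+k}$ together with $|s(s+1)\cdots(s+k)|\gtrsim|t|^{k+1}$ gives $O(X^{\sigma'+k}/(y^k T^{k-\xi-\epsilon}))$. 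Setting $T=X/y$ balances these and reduces the total integral error to $O\!\left(X^{\sigma_a-\delta+\epsilon'}(X/y)^{\xi+\epsilon}\right)$.

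\textbf{Residues, choice of $y$, and main obstacle.} Taylor expanding $x^{s_j}$ around $x=X$ inside the iterated integral $\widetilde{\Delta}_y^{(k)}[X^{s_j}]$ yields
\[
y^{-k}\widetilde{\Delta}_y^{(k)}[X^{s_j}]=X^{s_j}\left(1+\frac{s_j k y}{2X}+O\!\left(\frac{k^2 y^2}{X^2}\right)\right),
\]
so the residues of the shifted integrand reproduce $\mathrm{Res}_{s=s_j}\!\left(L(s)X^s/s\right)$ up to error $O(kyX^{\sigma_a-1})$ per simple pole, with analogous $\log$-polynomial corrections for higher-order poles. Writing $y=X^\alpha$, the integral contributes an error of exponent $\sigma_a-\delta+\epsilon'+(1-\alpha)(\xi+\epsilon)$, while the residue correction contributes exponent $\sigma_a-(1-\alpha)$; both are required to be at most $\sigma_a-\delta/(\xi+1)+\epsilon$. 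The prescribed value $1-\alpha=(k-\max\{\xi-1,0\})\delta/((k+1)\xi+k-\max\{\xi-1,0\})$ is exactly what satisfies both constraints for all sufficiently large $k$; its limit as $k\to\infty$ is $\delta/(\xi+1)$, and the $\max\{\xi-1,0\}$ arises from a case split on whether $\xi\ge 1$ in the finite-$k$ optimization. The main obstacle lies in the bookkeeping: the $k$-dependent constants ($2^k$ from the difference bound and $k$ from the Taylor correction) must be absorbed into the $X^\epsilon$ factor by choosing $k$ large enough in terms of $\delta$, $\xi$, $\epsilon$; the higher-order pole case also demands extending the Taylor analysis to reproduce the polynomial-in-$\log X$ structure of $\mathrm{Res}_{s=s_j}(L(s)X^s/s)$.
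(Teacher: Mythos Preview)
Your approach is essentially the same as the paper's, with one cosmetic difference in the contour: you shift the entire vertical line to ${\rm Re}(s)=\sigma'$ and then split at height $T$, while the paper keeps the tails $|t|>T$ on the original line ${\rm Re}(s)=\sigma_a+\epsilon$ and uses horizontal segments at height $\pm T$ to reach the shifted middle piece. That difference is exactly where the $\max\{\xi-1,0\}$ arises in the paper: the tails on the line of absolute convergence contribute $O(y^{-k}X^{\sigma_a+k+\epsilon}T^{-k})$ (no $\xi$, since $L$ is bounded there), while the horizontal segments contribute $O(y^{-k}X^{\sigma_a+k+\epsilon}T^{\xi-1-k+\epsilon})$, and combining them gives the exponent $\max\{\xi-1,0\}-k$. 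Your all-shifted tails instead carry exponent $\xi-k$, and your choice $T=X/y$ balances them exactly against the $|t|\le T$ piece to yield $O(X^{\sigma'}(X/y)^{\xi+\epsilon})$; optimizing against the residue correction $O(yX^{\sigma_a-1})$ then gives $1-\alpha=\delta/(\xi+1)$ directly, with no $k$-dependence and no case split. So your explanation of where the $\max\{\xi-1,0\}$ comes from is off---it is an artifact of the paper's box contour, not of a finite-$k$ optimization in your scheme---but your route to the stated error is otherwise correct and arguably cleaner, and the paper's prescribed $y$ still works in your argument since it lies within $\epsilon$ of your optimal choice once $k$ is large.
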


This proposition applies to both $N(X)$ and $\widehat{N}(X)$. The smoother counting in $N_k(X)$ is what allows this Proposition to use only $\sigma_a$ instead of $\widehat{\sigma}_a$ in the error bounds.

\begin{proof}
    Consider Perron's formula
    \[
    N_k(X) = \frac{1}{2\pi i}\int_{c-i\infty}^{c+i\infty} L(s)\frac{X^{s+k}}{s(s+1)\cdots(s+k)}ds,
    \]
    which holds for any $c > \sigma_a$. We will take $c=\sigma_a+\epsilon$ (one often takes $c=\sigma_a + \frac{1}{\log X}$ for this type of contour shifting argument, but this will be unnecessary for our purposes).

    We push the countour integral to the line ${\rm Re}(s) = \sigma_a-\delta+\epsilon$ (with $\epsilon>0$ chosen small so that $L(s)$ does not have any poles on this line) by taking
    \[
    \int_{\sigma_a + \epsilon-i\infty}^{\sigma_a + \epsilon+i\infty} = \int_C + \int_{\sigma_a + \epsilon-i\infty}^{\sigma_a + \epsilon-iT} + \int_{\sigma_a + \epsilon-iT}^{\sigma_a-\delta+\epsilon-iT}+\int_{\sigma_a-\delta+\epsilon-iT}^{\sigma_a-\delta+\epsilon+iT} + \int_{\sigma_a-\delta+\epsilon+iT}^{\sigma_a + \epsilon+iT} + \int_{\sigma_a + \epsilon+iT}^{\sigma_a + \epsilon+i\infty},
    \]
    where $C$ is the closed contour given by a box $\sigma_a + \epsilon-iT$ to $\sigma_a + \epsilon+iT$ to $\sigma_a-\delta+\epsilon + iT$ to $\sigma_a-\delta+\epsilon - iT$.

    Set
    \[
    \Phi(X) = \frac{1}{2\pi i}\int_C L(s) \frac{X^{s+k}}{s(s+1)\cdots (s+k)} ds.
    \]
    This is differentiable as a function of $X$, with
    \begin{align*}
    \Phi^{(k)}(X) &= \frac{1}{2\pi i}\int_C L(s) \frac{X^{s}}{s} ds\\
    &=\sum_{j=1}^{R} \underset{s=s_j}{\rm Res} \left(L(s)\frac{X^s}{s}\right)\\
    &=\sum_{j=1}^{r} \underset{s=s_j}{\rm Res} \left(L(s)\frac{X^s}{s}\right) + O\left(X^{\sigma_a - \frac{\delta}{\xi+1} + \epsilon}\right),
    \end{align*}
    where $s_1,s_2,...,s_R$ are the poles of $L(s)$ with real part $>\sigma_a-\delta+\epsilon$, ordered by increasing real parts. The final equality follows from the fact that
    \[
    \underset{s=s_j}{\rm Res} \left(L(s)\frac{X^s}{s}\right) = O\left(X^{{\rm Re}(s_j) + \epsilon}\right).
    \]
    In particular,
    \begin{align*}
        \Delta_y^{(k)} \Phi(X) &= \widetilde{\Delta}_y^{(k)} \Phi^{(k)}(X)\\
        &= y^k\Phi^{(k)}(X) + \widetilde{\Delta}_y^{(k)}\left[ \Phi^{(k)}(u)-\Phi^{(k)}(X)\right]_{u=X}\\
        &= y^k\Phi^{(k)}(X) + O\left(y^{k+1}\max_{X\le v\le X+ky}\{\Phi^{(k+1)}(v)\}\right).
    \end{align*}
    Knowing that all the terms of $\Phi^{(k)}(X)$ are of the form $X^a(\log X)^b$, we can differentiate this function directly and conclude that $\Phi^{(k)}(v) \ll v^{\sigma_a-1}$. Given the assumption that $y<X$, this implies
    \begin{align*}
        \Delta_y^{(k)} \Phi(X) &= y^k\Phi^{(k)}(X) + O_k\left(y^{k+1} X^{\sigma_a-1}\right).
    \end{align*}

    We now bound the remaining contour integrals. The Dirichlet series $L(s)$ converges absolutely in the region ${\rm Re}(s) > \sigma_a$. In particular, this implies
    \[
    |L(s)| = O_{\epsilon}(1)
    \]
    on the line ${\rm Re}(s) = \sigma_a + \epsilon$. This gives a bound
    \begin{align*}
        \left\lvert\Delta_y^{(k)}\int_{\sigma_a + \epsilon+iT}^{\sigma_a+\epsilon+i\infty}L(s)\frac{X^{s+k}}{s(s+1)\cdots (s+k)}\ ds\right\rvert &\ll \int_{\sigma_a + \epsilon+iT}^{\sigma_a+\epsilon+i\infty}\frac{|\Delta_y^{(k)} X^{s+k}|}{|s(s+1)\cdots (s+k)|}ds\\
        &\ll 2^k (X+ky)^{\sigma_a+k+\epsilon} \int_T^\infty \frac{1}{t^{k+1}}\ dt\\
        &\ll_k X^{\sigma_a+k+\epsilon}T^{-k}.
    \end{align*}
    A similar bound holds for $\int_{\sigma_a+\epsilon-i\infty}^{\sigma_a+\epsilon-iT}$ and for
    \begin{align*}
        \left\lvert\Delta_y^{(k)}\int_{\sigma_a -\delta + \epsilon\pm iT}^{\sigma_a+\epsilon\pm iT}L(s)\frac{X^{s+k}}{s(s+1)\cdots (s+k)}\ ds\right\rvert &= \left\lvert\int_{\sigma_a -\delta + \epsilon\pm iT}^{\sigma_a+\epsilon\pm iT}L(s)\frac{\Delta_y^{(k)}X^{s+k}}{s(s+1)\cdots (s+k)}\ ds\right\rvert\\
        &\ll_k X^{\sigma_a+k+\epsilon} \int_{\sigma_a -\delta + \epsilon\pm iT}^{\sigma_a+\epsilon\pm iT} \frac{(1+|t|)^{\xi+\epsilon}}{|s|^{k+1}}\ ds\\
        &\ll_{\delta,k} X^{\sigma_a+k+\epsilon} T^{\xi-k-1+\epsilon}.
    \end{align*}
    We combine these together as
    \[
    O_{\delta,k}(X^{\sigma_a+k+\epsilon} T^{\max\{\xi - 1,0\}-k+\epsilon}).
    \]

    For the remaining contour integral, we will need to use Lemma \ref{lem:FTC} to switch from $\Delta_y$ and $\widetilde{\Delta}_y$. We bound
    \begin{align*}
        \left\lvert\Delta_y^{(k)}\int_{\sigma_a -\delta + \epsilon-iT}^{\sigma_a-\delta+\epsilon+iT}L(s)\frac{X^{s+k}}{s(s+1)\cdots (s+k)}\ ds\right\rvert &= \left\lvert\int_{\sigma_a -\delta + \epsilon-iT}^{\sigma_a-\delta+\epsilon+iT}L(s)\frac{\Delta_y^{(k)}X^{s+k}}{s(s+1)\cdots (s+k)}\ ds\right\rvert\\
        &=\left\lvert\int_{\sigma_a -\delta + \epsilon-iT}^{\sigma_a-\delta+\epsilon+iT}L(s)\frac{\widetilde{\Delta}_y^{(k)}(s+k)\cdots (s+1)X^{s}}{s(s+1)\cdots (s+k)}\ ds\right\rvert\\
        &=\left\lvert\int_{\sigma_a -\delta + \epsilon-iT}^{\sigma_a-\delta+\epsilon+iT}L(s)\frac{\widetilde{\Delta}_y^{(k)}X^{s}}{s}\ ds\right\rvert\\
        &\ll y^k X^{\sigma_a-\delta+\epsilon} \int_{\sigma_a-\delta+\epsilon-iT}^{\sigma_a-\delta+\epsilon+iT} \frac{(1+|t|)^{\xi+\epsilon}}{|s|}\ ds\\
        &\ll y^k X^{\sigma_a-\delta+\epsilon} T^{\xi+\epsilon}.
    \end{align*}
    
    \begin{remark}
        This bound is where our proof differs from Roux's. Roux directly bounds $\Delta_y^{(k)} X^{s+k} \ll_k X^{\sigma_a+k+\epsilon}$ instead of switching to $\widetilde{\Delta}_y$, showing that this piece of the contour is
        \[
        \ll_k X^{\sigma_a+k+\epsilon}(T^{\xi-k-1+\epsilon}+O(1)).
        \]
        By choosing $k > \xi+1$, each term with a $T$ decays in $T$. Roux can then choose $T$ sufficiently large in terms of $X$ so that the error is dominated by only
        \[
        O(y^kX^{\sigma_a-1}) + O(X^{\sigma_a+k+\epsilon}).
        \]
        Our bound allows us to choose an optimum value for $T$, resulting in the improved power savings.
    \end{remark}

    Putting these together implies that
    \begin{align*}
        y^{-k}\Delta_y^{(k)} N(t) &= \Phi^{(k)}(X) + O_k(yX^{\sigma_a-1}) + O_{\delta,k}( y^{-k} X^{\sigma_a+k+\epsilon} T^{\max\{\xi-1,0\}-k+\epsilon}) + O(X^{\sigma_a-\delta+\epsilon}T^{\xi+\epsilon})
    \end{align*}
    We now set $T=X^\frac{(k+1)\delta}{(k+1)\xi+k-\max\{\xi-1,0\}}$ and $y=X^{1-\frac{(k-\max\{\xi-1,0\})\delta}{(k+1)\xi+k-\max\{\xi-1,0\}}}$. The error terms are then
    \begin{align*}
        O(y X^{\sigma_a - 1}) &= O(X^{\sigma_a - \frac{(k-\max\{\xi-1,0\})\delta}{(k+1)\xi+k-\max\{\xi-1,0\}}+\epsilon})\\
        O(y^{-k}X^{\sigma_a+k+\epsilon}T^{\max\{\xi-1,0\}-k+\epsilon}) &= O(X^{\sigma_a - \frac{(k-\max\{\xi-1,0\})\delta}{(k+1)\xi+k-\max\{\xi-1,0\}}+\epsilon})\\
        O(X^{\sigma_a-\delta+\epsilon}T^{\xi+\epsilon}) &= O(X^{\sigma_a - \frac{(k-\max\{\xi-1,0\})\delta}{(k+1)\xi+k-\max\{\xi-1,0\}}+\epsilon}).
    \end{align*}
    In fact, these choices of $T$ and $y$ are optimal when $k$ is sufficiently large as the first error term is increasing in $y$, the second is decreasing in $y$ and $T$ for $k > \xi - 1$, and the third is increasing in $T$.

    It is clear that
    \[
    \lim_{k\to \infty} \sigma_a - \frac{(k-\max\{\xi-1,0\})\delta}{(k+1)\xi+k-\max\{\xi-1,0\}} = \sigma - \frac{\delta}{\xi+1},
    \]
    so there exists a sufficiently large $k$ depending only on $\delta$, $\xi$, and $\epsilon>0$ such that
    \[
    \sigma_a - \frac{(k-\max\{\xi-1,0\})\delta}{(k+1)\xi+k-\max\{\xi-1,0\}} \le \sigma - \frac{\delta}{\xi+1} + \epsilon.
    \]
    Thus, there exists a $k$ sufficiently large depending only on $\delta$, $\xi$, and $\epsilon>0$ such that
    \[
    y^{-k}\Delta_y^{(k)} N(X) = \Phi^{(k)}(X) + O(X^{\sigma_a - \frac{\delta}{\xi+1}+\epsilon}).
    \]
    Given that $\Phi^{(k)}(X)$ equals the sum of residues in the Proposition statement, this concludes the proof.
\end{proof}

\subsection{The Proof of Theorem \ref{thm:xi+1}}

We first apply Proposition \ref{prop:roux16.16} and Proposition \ref{prop:contour_shift} to $\widehat{N}(X)$. Let $y = X^{1-\frac{(k-\max\{\xi-1,0\})\delta}{(k+1)\xi+k-\max\{\xi-1,0\}}} = o(X)$ and $k$ be sufficiently large in terms of $\delta$, $\xi$, and $\epsilon > 0$ to satisfy Proposition \ref{prop:contour_shift}. As in the proof of Proposition \ref{prop:contour_shift}, take $\widehat{\Phi}^{(k)}(X)$ to be equal to the sum of residues of $\widehat{L}(s)/s$. Then
\begin{align*}
    y^{-k} \Delta_y^{(k)}\widehat{N}_k(X) &= \widehat{N}(X) + O\left(y^{-k}\Delta_y^{(k)}[\widehat{N}_k(X)-\widehat{N}_k(X-ky)]\right)\\
    &= \widehat{N}(X) + O\left(\widehat{\Phi}^{(k)}(X) - \widehat{\Phi}^{(k)}(X-ky)\right) + O\left(X^{\widehat{\sigma}_a-\frac{\delta}{\xi+1}+\epsilon}\right)\\
    &= \widehat{N}(X) + O\left(ky\underset{X-ky\le v\le X}{\max}\widehat{\Phi}^{(k+1)}(v)\right) + O\left(X^{\widehat{\sigma}_a-\frac{\delta}{\xi+1}+\epsilon}\right)\\
    &= \widehat{N}(X) + O(yX^{\widehat{\sigma}_a-1}) + O\left(X^{\widehat{\sigma}_a-\frac{\delta}{\xi+1}+\epsilon}\right)\\
    &= \widehat{N}(X) + O\left(X^{\widehat{\sigma}_a-\frac{\delta}{\xi+1}+\epsilon}\right).
\end{align*}
This proves Theorem \ref{thm:xi+1} in the case that $a_n = \widehat{a}_n$ are nonnegative real numbers.

For the same choices of $y$ and $k$, Proposition \ref{prop:roux16.19+} then implies
\begin{align*}
    y^{-k} \Delta_y^{(k)}N_k(X) &= N(X) + O\left(y^{-k} \Delta_y^{(k)}\widehat{N}_k(X) - \widehat{N}(X)\right)\\
    &= N(X) + O\left(X^{\widehat{\sigma}_a-\frac{\delta}{\xi+1}+\epsilon}\right),
\end{align*}
concluding the proof.

\subsection{The region of meromorphic continuation and the power saving bounds cannot be conflated}\label{subsec:common_error}

The process of ``shifting the contour" in Perron's formula to prove a \emph{power saving} Tauberian theorem is a delicate process. Theorem \ref{thm:xi+1} requires not just a meromorphic continuation of the Dirichlet series, but also bounds in vertical strips. It is a common misconception to conflate the right half plane ${\rm Re}(s) > \sigma_a - \delta$ with power savings $O(X^{\sigma_a - \delta+\epsilon})$ for the error term.

We believe that Lee--Oh conflated these concepts in \cite{lee-oh2012} when studying $\theta(\Q,C_p)$. In particular, they make the following claim:
\begin{align*}    
    \text{\emph{If $g(s)$ is holomorphic on ${\rm Re}(s) > \sigma$ then $\displaystyle\frac{1}{2\pi i}\int_{c-i\infty}^{c+i\infty} g(s) \frac{X^s}{s}ds = O(X^{\sigma+\epsilon})$.}}
\end{align*}
Their meromorphic continuation results prove that the generating series has finitely many poles with real part $>\frac{1}{2a(C_p)}$, respectively $>\frac{1}{4a(C_p)}$ under GRH. The above claim would imply that $\theta(\Q,C_p) \le \frac{1}{2a(C_p)}$, respectively $\le \frac{1}{4a(C_p)}$ under GRH.

Lee--Oh cite Delange's Tauberian theorem for this (see \cite[Theorem III]{delange1954} or \cite[II Theorem 7.13 and 7.28]{tenenbaum2015}), however this result does not apply to just any holomorphic function. It only applies to Dirichlet series which are \emph{convergent} for ${\rm Re}(s) > \sigma$, not just holomorphic for ${\rm Re}(s) > \sigma$. Any error bound produced from a contour integral beyond the region of convergence (or for non-Dirichlet series) will necessarily incorporate the growth of $g(s)$ in vertical strips.

In particular, this claim is demonstrably false for $g(s) = \zeta(s)^2 - \frac{1}{(s-1)^2} - \frac{2\gamma}{s-1}$, where $\gamma$ the Euler--Mascheroni constant. The rational function pieces cancel out the poles of $\zeta(s)^2$, making $g(s)$ an entire function. Moreover,
\[
    \Delta(X) = \frac{1}{2\pi i}\int_{c-i\infty}^{c+i\infty} g(s) \frac{X^s}{s}ds
\]
is equal to the error term for Dirichlet's divisor problem
\[
    \Delta(X) = \sum_{n\le X} \left(\sum_{d\mid n} 1\right) - X\log X - (2\gamma - 1)X.
\]
The claim would suggest that $\Delta(X) = O(X^\epsilon)$, as $g(s)$ is holomorphic on the right halfplane ${\rm Re}(s) >0$. However, Hardy previously showed that $\inf\{\theta : \Delta(X) = O(X^\theta)\} \ge \frac{1}{4}$ in \cite{hardy1916}, contradicting the claim.

For this reason, we believe that the bounds for $\theta(\Q,C_p)$ claimed in \cite{lee-oh2012} do not follow from the arguments in that paper. To the author's knowledge, the bounds in this paper are the best known for $\theta(\Q,C_p)$ for $p\ge 3$. Using S\"ohne's subconvexity bounds, Corollary \ref{cor:uncond} implies
\[
    \theta(\Q,C_p) \le \frac{1}{a(C_p)}\left(\frac{p+2}{p+5}\right) = \frac{p+2}{(p-1)(p+5)},
\]
whereas Corollary \ref{cor:GLH} with GRH implies
\[
    \theta(\Q,C_p)\le \frac{1}{2a(C_p)} = \frac{1}{2(p-1)}.
\]
Unfortunately, neither of these bounds are small enough to reveal the secondary term $X^{\frac{1}{3(p-1)}}$ claimed by \cite{lee-oh2012} under GRH. The generating Dirichlet series does have a pole at $s=\frac{1}{3(s-1)}$ as described in \cite{lee-oh2012}, but, in light of \cite{hardy1916} and similar results, it is not clear if the error bounds can be made small enough to reveal the corresponding asymptotic term.

The power savings claimed in \cite{lee-oh2012} can be proven for \emph{smoothed} counting functions, such as
\[
\sum_{\Gal(K/\Q)\cong C_p} \exp\left(1-\frac{|\disc(K/\Q)|}{X}\right),
\]
in which case there would be an asymptotic term of size $X^{\frac{1}{3(p-1)}}$. This is because the rapid decay of the smoothing function $\exp(1-\frac{n}{X})$ overrides any polynomial growth of the generating series in vertical strips, so that the region of meromorphic continuation actually does match the smoother error bounds.

\section{Subconvexity}\label{sec:subconvexity}

Vertical bounds of the form
\[
|L(s)| \ll_L (1+|t|)^{\beta}
\]
are a topic of great interest in analytic number theory. We give a brief overview of this area, based on the material in \cite{iwaniec-kowalski2004} for the Selberg class of $L$-functions. Strictly speaking, this is called the \textbf{$t$-aspect} of vertical bounds, as it relates to the growth in $t$. There is also the \textbf{$q$-aspect}, which asks how the implicit constant depends on the conductor $q(L)$ of the $L$-function
\[
    |L(s)|\ll q(L)^\alpha (1+|t|)^{\beta}.
\]
We restrict ourselves to considering only the $t$-aspect, as this is what we require to prove the main results of this paper.

Most objects that one might call an ``$L$-function" (such as those functions in the Selberg class) have a function $\mu_L:\R\to \R_{\ge 0}$ associated to them defined as
\[
\mu_L(\sigma) = \inf\{\xi : |L(\sigma+it)|\ll_L (1+|t|)^{\xi}\text{ for all }|t|\text{ sufficiently large}\}.
\]
The Phragmen-Lindel\"of principle implies that $\mu_L$ is a \textbf{convex function}. If $\sigma_a$ is the abscissa of absolute convergence for $L(s)$, it is clear that $\mu_L(\sigma) = 0$ for $\sigma > \sigma_a$. If $L(s)$ has a functional equation, this symmetry can be used to determine $\mu_L(\sigma)$ for $\sigma < 0$. For example, if $L(s)$ belongs to the Selberg class with degree $d$, then $\mu_L(\sigma) = \frac{d}{2} - \sigma$ for $\sigma < 0$.

The behavior of $\mu_L(\sigma)$ in the critical strip $(0,\sigma_a)$ is a subject of great interest, in part because of the role these bounds play in Tauberian theorems like Theorem \ref{thm:xi+1}.

\begin{proposition}
    Let $L(s)$ be an $L$-function in the Selberg class of degree $d$, so that the abscissa of absolute convergence is $\sigma_a=1$. Then
    \[
    \mu_L(\sigma) \le \frac{d}{2}(1-\sigma)
    \]
    in the critical strip $0\le \sigma \le 1$. This is called the \textbf{convexity bound}.
\end{proposition}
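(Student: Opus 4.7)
The plan is to combine the Phragmen--Lindel\"of convexity principle with endpoint bounds on $\mu_L$ at $\sigma=0$ and $\sigma=1$. The three-lines theorem applied to $L(s)$ on any vertical strip shows that $\mu_L(\sigma)$ is a convex function of $\sigma$ wherever it is finite, and the Selberg class axioms guarantee finiteness on all of $\R$. Granting convexity, it suffices to bound $\mu_L$ at the two edges of the critical strip and then linearly interpolate.

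For the right edge, absolute convergence of $L(s)=\sum a_n n^{-s}$ on $\mathrm{Re}(s)>1$ gives $|L(\sigma+it)|\le\sum |a_n|n^{-\sigma}=O_\sigma(1)$ for every $\sigma>1$, so $\mu_L(\sigma)=0$ there. Continuity of $\mu_L$ (which is automatic from convexity on the interior of its finite region) then forces $\mu_L(1)=0$.

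For the left edge I would exploit the functional equation supplied by the Selberg class axioms. Writing the completed $L$-function as $\Lambda(s)=Q^s\prod_{j=1}^r \Gamma(\lambda_j s+\mu_j)\,L(s)$ with $d=2\sum_j \lambda_j$, the relation $\Lambda(s)=\varepsilon\,\overline{\Lambda(1-\bar s)}$ rearranges to $L(s)=\chi_L(s)\,\overline{L(1-\bar s)}$ for an explicit ratio of gamma factors $\chi_L$. Applying Stirling's formula to each gamma factor, the exponential factors $e^{-\pi\lambda_j|t|/2}$ cancel between numerator and denominator, leaving the polynomial estimate $|\chi_L(\sigma+it)|\asymp |t|^{d(1/2-\sigma)}$ as $|t|\to\infty$. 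Specializing to $\sigma=0$, the reflected argument $1-\sigma=1$ lies at the edge of absolute convergence, where $|L(1+it)|=O_\epsilon(|t|^{\epsilon})$ holds trivially, yielding $|L(it)|\ll |t|^{d/2+\epsilon}$ and hence $\mu_L(0)\le d/2$.

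Combining the endpoint bounds with convexity, for $\sigma\in[0,1]$ we obtain $\mu_L(\sigma)\le (1-\sigma)\mu_L(0)+\sigma\,\mu_L(1)\le \tfrac{d}{2}(1-\sigma)$, which is the desired convexity bound. The only delicate point is the Stirling computation for $\chi_L$: one must verify that after cancelling the exponential parts the polynomial exponent coming from the $r$ gamma factors assembles to exactly $d(1/2-\sigma)=2\sum_j\lambda_j\cdot(1/2-\sigma)$, so that the degree $d$ enters the final bound correctly. Everything else is either a direct application of Phragmen--Lindel\"of or immediate from absolute convergence.
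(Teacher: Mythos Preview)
Your proof is correct and follows essentially the same approach as the paper, which simply states that the bound follows from convexity of $\mu_L$ and interpolation between the regions $\sigma<0$ and $\sigma>1$. One minor point: calling $|L(1+it)|=O_\epsilon(|t|^\epsilon)$ ``trivial'' is slightly misleading since $\sigma=1$ is only the edge of absolute convergence, but you have in fact already justified this via your continuity argument giving $\mu_L(1)=0$; the paper sidesteps this by interpolating between $\sigma=-\epsilon$ and $\sigma=1+\epsilon$ and letting $\epsilon\to 0$.
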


This proposition follows from the fact that $\mu_L(\sigma)$ is convex, and interpolating between the regions $\sigma < 0$ and $\sigma > 1$.

Any bound that beats the convexity bound is called a \textbf{subconvexity bound}. These are often presented as bounds specifically for $\mu_L(1/2)$, from which convexity of $\mu_L$ gives a corresponding improved bound for all $\sigma$ in the critical strip. Proving new subconvexity bounds is a very rich, very active area of number theory. It would be unreasonable to attempt to summarize the best known results for all $L$-functions, especially as results for individual $L$-functions are constantly being improved. We will restrict ourselves to a general, uncomplicated subconvexity bound for Hecke $L$-functions:

\begin{theorem}[{S\"ohne \cite{sohne1997}}]
    Let $L(s,\chi)$ be a Hecke $L$-function over the number field $K$. Then $L(s,\chi)$ is an $L$-function of degree $[K:\Q]$ and
    \[
    \mu_\chi(1/2) \le \frac{[K:\Q]}{6}.
    \]
    This beats the convexity bound of $\mu_\chi(1/2)\le \frac{[K:\Q]}{4}$.
\end{theorem}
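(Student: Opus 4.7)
The plan is to extend the classical Weyl--Hardy--Littlewood method, which yields $|\zeta(1/2+it)| \ll (1+|t|)^{1/6+\epsilon}$ for the Riemann zeta function, to Hecke $L$-functions over an arbitrary number field $K$ of degree $d = [K:\Q]$. The starting point is the approximate functional equation: on the critical line, $L(1/2+it,\chi)$ agrees up to a negligible tail with a smoothly weighted sum over integral ideals of $K$ of effective length $\asymp (1+|t|)^{d/2}$. The convexity bound $(1+|t|)^{d/4}$ at $s=1/2$ is essentially the trivial bound on this sum, so the goal is to exhibit cancellation of size $(1+|t|)^{d/12}$ beyond that trivial bound.

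First I would parametrize the integral ideals in each ideal class by lattice points in $\mathcal{O}_K$, using a fundamental domain $\mathcal{F}$ for the action of the unit group on $K \otimes_\Q \R \cong \R^{r_1} \times \C^{r_2}$. The ideal sum then becomes a $d$-dimensional oscillatory sum of shape $\sum_{x \in \mathcal{O}_K \cap \mathcal{F}} \chi(x)\, e(t\log|N(x)|)\, g(x)$, with $g$ a smooth cutoff inherited from the approximate functional equation. Against this sum I would apply van der Corput's $A$-process one coordinate at a time: the phase $t\log|N(x)|$ has second derivative of controllable size in each real coordinate, so a single Weyl differencing step in one variable saves a factor of roughly $(1+|t|)^{-1/12}$ over the trivial bound on that one-dimensional sub-sum. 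Carrying the differencing through all $d$ coordinates accumulates to the desired total saving of $(1+|t|)^{d/12}$.

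The main obstacle will be the multidimensional bookkeeping: the fundamental domain, the smooth cutoff $g$, and the order of differencing must be chosen compatibly so that the phase remains sufficiently regular after each iteration of the $A$-process, and the sum over ideal classes and the action of units must be handled uniformly in $\chi$. Pairs of complex embeddings also couple consecutive differencing steps, so more care is required than in the totally real case. Once the bound at $\sigma = 1/2$ is established, the Phragmen--Lindel\"of principle propagates it to the full critical strip, giving the sharpened subconvex exponent wherever it is needed in the contour-shifting arguments of Section \ref{sec:Tauberian}.
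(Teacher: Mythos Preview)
The paper does not prove this theorem; it is stated as a quoted result from S\"ohne \cite{sohne1997} and used as a black box input to the contour-shifting arguments. There is therefore no ``paper's own proof'' to compare your attempt against.

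Your sketch is a plausible high-level outline of the van der Corput approach that underlies results of this type, but as written it has a real gap. You describe applying only the $A$-process (Weyl differencing) once in each of the $d$ coordinates and claim each step saves $(1+|t|)^{-1/12}$. A single application of the $A$-process by itself does not yield a nontrivial exponent; the classical $1/6$ for $\zeta$ comes from the pair $AB$ (one differencing followed by the $B$-process, i.e.\ Poisson summation/stationary phase), and you never invoke the $B$-step. Moreover, the savings in a multidimensional exponential sum do not simply factor coordinate-by-coordinate: the phase $t\log|N(x)|$ couples all coordinates through the norm form, so after differencing in one variable the remaining sum is not of the same shape in the others. S\"ohne's actual argument handles this by working with the full $d$-dimensional second-derivative matrix of the phase and applying a multidimensional van der Corput estimate, rather than iterating a one-dimensional lemma $d$ times. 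Your paragraph on ``multidimensional bookkeeping'' gestures at this but does not supply the mechanism.

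Since the paper treats this as an external input, the appropriate fix is either to cite \cite{sohne1997} as the paper does, or, if you want to include a proof, to give the genuine multidimensional $AB$-process with the Hessian of $t\log|N(x)|$ controlled on the fundamental domain.
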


The most one could hope for is that $\mu_L(1/2)=0$. This is the content of the generalized Lindel\"of Hypothesis:

\begin{conjecture}[Generalized Lindel\"of Hypothesis]\label{conj:glh}
    Let $L(s)$ be an $L$-function in the Selberg class. Then $\mu_L(1/2) = 0$.
\end{conjecture}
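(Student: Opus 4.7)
The statement is the Generalized Lindelöf Hypothesis, which is a famous open conjecture rather than a theorem with an accessible proof. Any honest ``proof proposal'' must therefore be a plan sketching what one would need to establish, together with the principal obstructions, rather than a plausible route to a complete argument. I will outline the standard reduction strategies and indicate where each one stalls.

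The most well known approach is via the generalized Riemann hypothesis. Assuming GRH for $L(s)$ in the Selberg class, one obtains $\mu_L(1/2)=0$ by combining the functional equation, the Hadamard product, and Littlewood's conditional estimate $\log|L(\sigma+it)|\ll (\log|t|)^{2(1-\sigma)}/\log\log|t|$ for $\sigma>1/2$, which in particular gives $\log|L(1/2+it)|=o(\log|t|)$. So the plan in this direction would be: first assume GRH, then run the Borel--Carath\'eodory plus Hadamard factorization argument on a disk centered slightly to the right of $1/2$, push the radius to $1/2$, and conclude bounds of the shape $|L(1/2+it)|\ll_\epsilon (1+|t|)^\epsilon$. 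The hard part here is of course GRH itself, which is out of reach.

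A second, unconditional approach would be to iterate the subconvexity method. The idea is to start from the convexity bound $\mu_L(1/2)\le d/4$ (from Phragm\'en--Lindel\"of) and repeatedly improve the exponent by amplification, moment methods, the $\delta$-method, spectral decompositions of shifted convolution sums, or the recent delta-symbol and trace formula techniques (Bykovskii, Duke--Friedlander--Iwaniec, Michel--Venkatesh, Nelson, etc.). Each iteration produces a slightly better constant, say $\mu_L(1/2)\le d/4-\eta$ for some small explicit $\eta>0$. The plan would be to make $\eta$ depend only on $d$ in such a way that repeated application drives $\eta$ to $d/4$. The fundamental obstruction is that existing methods produce a \emph{fixed} saving rather than a self-improving one: each new identity introduces losses of the same order as the savings, so one cannot bootstrap, and passing to the limit gives no contradiction.

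A third plan is to prove Lindel\"of on average over a family and then isolate individual $L$-functions by amplification. For Hecke $L$-functions over a number field $K$ one would choose a natural family (say characters of bounded conductor, or a Rankin--Selberg family), establish a sharp second or fourth moment bound on the critical line, and combine this with a positive amplifier supported on a short Dirichlet polynomial. The main obstacle is that known moment estimates currently fall short of what is needed to deduce $\mu_L(1/2)=0$ in any nontrivial degree above $2$, and even in degree $2$ only subconvexity (not Lindel\"of) has been achieved this way.

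In summary, I would present no new strategy here: the realistic statement is that Conjecture~\ref{conj:glh} is open, that it follows from GRH by the Borel--Carath\'eodory/Hadamard argument sketched above, and that all unconditional attacks currently stop at subconvexity. For the purposes of this paper, the conjecture is used only as a hypothesis in Corollaries~\ref{cor:GLH} and~\ref{cor:GLH_twist}, so no proof is required at this point.
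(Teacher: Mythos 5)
You are right that Conjecture~\ref{conj:glh} is a well-known open problem: the paper offers no proof, merely stating it and remarking (as you do) that it follows from GRH and serves only as a hypothesis in Corollaries~\ref{cor:GLH} and~\ref{cor:GLH_twist}. Your assessment therefore matches the paper's treatment exactly, and nothing further is required.
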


Conjecture \ref{conj:glh} would follow from knowing that $L(s)$ satisfies GRH. In this case, convexity of $\mu_L$ implies that
\[
\mu_L(\sigma) = d\max\left\{\frac{1}{2}-\sigma,0\right\}
\]
where $d$ is the degree of $L(s)$.

In our setting, the vertical bounds for $\widehat{w_s}(h)$ are built from the vertical bounds for several Artin $L$-functions along with the fact that $B(s)$ is absolutely convergent on the region ${\rm Re}(s) \ge 1/2a(T)$.

\begin{corollary}\label{cor:w_vertical_bounds}
    Let $w_s:H^1(\A_K,T(\pi))\to \C$ be given by $w_s(f) = |\disc(f*\pi)|^{-s}$ for each complex $s\in \C$ and $h\in H^1(K_p,T(\pi)^*)$. Let $\mu_h(\sigma)$ be the vertical bounds associated to the Dirichlet series $\widehat{w_s}(h)$, that is
    \[
    \mu_h(\sigma) = \inf\{\xi : |\widehat{w_s}(h)| \ll (1+|t|)^{\xi}\text{ for all }s=\sigma+it\text{ with }|t|\text{ sufficiently large}\}.
    \]
    Then
    \begin{align*}
        \mu_h(\sigma) \le \sum_{\substack{o\subseteq \Hom((\Q/\Z)^*,T(\pi))\\o\ne \{1\}}} 2\mu_{\rho_h|_{\C o}}(1/2)\max\{1-\ind(o)\sigma,0\}
    \end{align*}
    for each $\sigma > \frac{1}{2a(T)}$, where $\rho_h$ is the twisted permutation representation given in Definition \ref{def:twisted_perm_rep} and $\mu_\rho$ is the vertical bound function associated to the Artin $L$-function $L(s,\rho)$.
\end{corollary}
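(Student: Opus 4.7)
The plan is to extract vertical bounds from the explicit factorization of $\widehat{w_s}(h)$ provided by Corollary \ref{cor:meromorphic_continuation}, treating each factor separately and then multiplying. Starting from
\[
\widehat{w_s}(h) = B(s) \prod_{o\subseteq \Hom((\Q/\Z)^*,T(\pi))} L(\ind(o)s, \rho_h|_{\C o}),
\]
the factor $B(s)$ is absolutely convergent on the closed halfplane ${\rm Re}(s) \ge \sigma_0$ for any $\sigma_0 > \frac{1}{2a(T)}$, so in particular $B(s) = O(1)$ on vertical lines and contributes nothing to $\mu_h(\sigma)$. The trivial orbit $o = \{1\}$ has $\ind(o) = 0$, so the corresponding factor $L(0, \rho_h|_{\C\{1\}})$ is a constant in $s$ and also contributes $O(1)$. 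The work is therefore to bound the product over nontrivial orbits.

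For each orbit $o\ne\{1\}$, Lemma \ref{lem:twistedperm} decomposes $\rho_h|_{\C o}$ as an induction from a one-dimensional representation, so $L(s,\rho_h|_{\C o})$ is (up to induction, which preserves the $L$-function) a Hecke $L$-function and in particular lies in the Selberg class. Consequently $\mu_{\rho_h|_{\C o}}$ is a convex function on $\R$. Since the abscissa of absolute convergence of a Hecke $L$-function is $1$, we have $\mu_{\rho_h|_{\C o}}(\sigma') = 0$ for $\sigma' \ge 1$, and convexity on the interval $[1/2,1]$ yields the standard Phragmen--Lindel\"of interpolation
\[
\mu_{\rho_h|_{\C o}}(\sigma') \le 2\mu_{\rho_h|_{\C o}}(1/2)\max\{1-\sigma',0\} \qquad (\sigma' \ge 1/2).
\]

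Now substitute $\sigma' = \ind(o)\sigma$. Since $\ind(o)\ge a(T)$ for any $o\ne\{1\}$ and $\sigma > \frac{1}{2a(T)}$, we have $\ind(o)\sigma > 1/2$, so the interpolation bound applies and gives
\[
|L(\ind(o)s, \rho_h|_{\C o})| \ll_\epsilon (1+|t|)^{2\mu_{\rho_h|_{\C o}}(1/2)\max\{1-\ind(o)\sigma,\,0\} + \epsilon}
\]
for $|t|$ large. Multiplying these bounds over the finitely many nontrivial orbits (and absorbing the $O(1)$ contributions of $B(s)$ and the trivial orbit) gives the stated bound for $\mu_h(\sigma)$.

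There is no real obstacle here; the corollary is essentially a bookkeeping exercise once the factorization of Corollary \ref{cor:meromorphic_continuation} is in hand. The only mild subtleties are (i) verifying that $B(s)$ is bounded on vertical lines in the region of absolute convergence (which is immediate), and (ii) confirming that the substitution $\sigma' = \ind(o)\sigma$ stays in the range $\sigma' \ge 1/2$ where the convex interpolation from $1/2$ to $1$ is valid, which follows from the hypothesis $\sigma > \frac{1}{2a(T)}$ together with $\ind(o) \ge a(T)$.
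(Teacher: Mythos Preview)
Your proposal is correct and follows exactly the approach the paper intends: the paper's own proof simply says the result is immediate from the vertical bounds of each factor in Corollary \ref{cor:meromorphic_continuation}, and you have spelled out precisely those details (boundedness of $B(s)$, triviality of the $o=\{1\}$ factor, and convex interpolation for each $L(\ind(o)s,\rho_h|_{\C o})$). The only point worth noting explicitly is that the imaginary part of $\ind(o)s$ is $\ind(o)t$ rather than $t$, but since $\ind(o)$ is a fixed positive constant this is absorbed into the implied constant and does not affect $\mu_h(\sigma)$.
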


\begin{proof}
    The proof is immediate from the vertical bounds of each factor in Corollary \ref{cor:meromorphic_continuation}.
\end{proof}

Given that $\rho_h|_{\C o}$ is induced from a Hecke character, Corollary \ref{cor:w_vertical_bounds} gives vertical bounds for $\widehat{w_s}(h)$ in terms of vertical bounds for Hecke $L$-functions. Together with Theorem \ref{thm:xi+1}, this implies that better subconvexity bounds for Hecke $L$-functions translate to better power savings for counting abelian extensions.

\section{Optimizing the Power Savings}\label{sec:optimizing}

We are now ready to state and prove the explicit bounds for $\theta(K,G)$ and $\theta(K,T(\pi))$ in terms of subconvexity bounds for Hecke $L$-functions.

\begin{theorem}\label{thm:best_bound}
    Let $K$ be a number field, $G\subseteq S_n$ a transitive subgroup, $T\normal G$ abelian, and $\pi:G_K\to G$ a continuous homomorphism. For each real $a(T) \le D \le 2a(T)$,
    \[
    \theta(K,T(\pi)) \le \frac{1}{a(T)} - \frac{\displaystyle\frac{1}{a(T)} - \frac{1}{D}}{\displaystyle 1+\underset{h\in H_{ur^{\perp}}^1(K,T(\pi)^*)}{\max}\left\{\sum_{\substack{o\subseteq \Hom((\Q/\Z)^*,T(\pi))\\0 < \ind(o) < D}} 2\mu_{\rho_h|_{\C o}}(1/2)\left(1-\frac{\ind(o)}{D}\right)\right\}},
    \]
    where $\rho_h$ is the twisted permutation representation given by Definition \ref{def:twisted_perm_rep}.
    
    Moreover, if $f(D)$ is the function giving this upper bound then $f(D)$ is a convex function on the interval $[a(T),2a(T)]$ whose value minimum value occurs at one of $D\in \ind(T-\{1\})$ or $D=2a(T)$.
\end{theorem}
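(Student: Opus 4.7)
The strategy is to combine the meromorphic continuation from Corollary~\ref{cor:meromorphic_continuation}, the subconvexity-based vertical bounds from Corollary~\ref{cor:w_vertical_bounds}, and the Tauberian theorem of Section~\ref{sec:Tauberian}, and then do some one-variable calculus on the resulting upper bound as a function of $D$.

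First I would apply Theorem~\ref{thm:xi+1} to the generating Dirichlet series $D_{K,T(\pi)}(s)$. Subsection~\ref{subsec:AO} expresses this series (before the sieve to surjective coclasses) as a finite sum of Euler products $\widehat{w_s}(h)$ for $h \in H^1_{ur^\perp}(K,T(\pi)^*)$, and a M\"obius inversion cuts out the surjective contribution without changing either the region of meromorphic continuation or the order of the vertical bounds. The hypotheses of Theorem~\ref{thm:xi+1} are then satisfied with abscissa of absolute convergence $\sigma_a = \widehat{\sigma}_a = 1/a(T)$. For each $D \in [a(T),2a(T)]$ I would take $\delta = 1/a(T) - 1/D$, so the line $\Re(s) = \sigma_a - \delta = 1/D$ sits inside the strip $\Re(s) > 1/(2a(T))$ of meromorphic continuation from Corollary~\ref{cor:meromorphic_continuation}. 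Evaluating Corollary~\ref{cor:w_vertical_bounds} at $\sigma = 1/D$, the weight $\max\{1 - \ind(o)\sigma, 0\}$ is strictly positive precisely when $\ind(o) < D$; taking the maximum over the finite set $H^1_{ur^\perp}(K, T(\pi)^*)$ yields a uniform vertical-growth exponent $\xi$ for the whole generating series. Inserting $\xi$ into the Tauberian conclusion $\theta \le \sigma_a - \delta/(\xi+1)$ reproduces exactly the stated upper bound $f(D)$.

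For the convexity and minimum assertion, I would analyze $f(D)$ piecewise on the subintervals cut out by the breakpoints in $\ind(T-\{1\}) \cup \{2a(T)\}$. On each subinterval the set of orbits appearing in the denominator is constant, so the denominator has the form $1 + g(D)$ with $g(D) = A - B/D$ for constants
\[
A = \sum 2\mu_{\rho_h|_{\C o}}(1/2), \qquad B = \sum 2\mu_{\rho_h|_{\C o}}(1/2)\ind(o)
\]
(depending on the piece and on the maximizing $h$). A direct computation then gives
\[
f'(D) = -\frac{K}{D^{2}(1+g(D))^{2}}, \qquad f''(D) = \frac{2K(1+A)}{D^{3}(1+g(D))^{3}},
\]
where $K := 1 + A - B/a(T) = 1 + \sum 2\mu_{\rho_h|_{\C o}}(1/2)(1 - \ind(o)/a(T))$. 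Since every nontrivial orbit satisfies $\ind(o) \ge a(T)$, each summand of $K$ is non-positive, so $K$ is non-increasing as $D$ crosses successive breakpoints and starts at $K = 1$ just above $D = a(T)$. Hence $f$ is monotone on each piece, and at each breakpoint $f$ is continuous while $f'$ only jumps upward. Together these force the global minimum of $f$ on $[a(T),2a(T)]$ to be attained at a breakpoint in $\ind(T-\{1\})$ or at the endpoint $D = 2a(T)$, and give the convex behavior of $f$ on the subintervals where $K$ remains positive.

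The main obstacle will be bookkeeping rather than any single deep step: correctly tracking how $A$, $B$, and $K$ update as new orbits enter the sum at each breakpoint, ensuring that the maximizing $h$ can be chosen consistently across pieces, and verifying that the M\"obius sieve to surjective coclasses preserves both the meromorphic continuation and the vertical growth exponent needed to invoke Theorem~\ref{thm:xi+1}.
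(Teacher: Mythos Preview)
Your proposal is correct and follows essentially the same route as the paper, the only cosmetic difference being that you work directly in the variable $D$ while the paper changes variables to $\delta = 1/a(T) - 1/D$; your constant $K = 1 + A - B/a(T)$ is exactly the paper's $b(n_1)$, and the piecewise-monotonicity-plus-upward-jump argument for locating the minimum is identical. If anything you are slightly more careful than the paper in flagging that genuine convexity ($f'' \ge 0$) only holds on pieces where $K > 0$ and in noting the bookkeeping issue with the maximizing $h$; the paper asserts global convexity but really only establishes (and only needs) that the minimum lies at a breakpoint.
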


The fact that $\rho_h$ is a twisted permutation representation means that $\rho_h|_{\C o}$ is induced from a Hecke character, so that these bounds are determined solely by subconvexity bounds of Hecke $L$-functions. The fact that the upper bound is given by a convex function means that it has a unique \emph{local} minimum, which helped us write more efficient code for Data Analysis \ref{data}.

The bounds given in the introduction immediately follow from Theorem \ref{thm:best_bound}. Additionally, the second statement implies that these bounds are optimized - one of the finitely many values of $D$ listed will give the smallest power saving bound produced by Theorem \ref{thm:xi+1}. Precisely which value of $D$ achieves this minimum will depend on the subconvexity bounds used.

\begin{proof}[Proof of Corollary \ref{cor:uncond} and Corollary \ref{cor:uncond_twist}]
    The representation $\rho_h|_{\C o}$ is induced from a Hecke character on $\overline{K}^{\Stab_{\overline{\rho}_h}(o))}$. Notice that Definition \ref{def:twisted_perm_rep} implies that the true permutation representation $\overline{\rho}_h$ is independent of $h$, and is the permutation representation given by the Galois action on $\Hom((\Q/\Z)^*,T(\pi))$. Thus, $\Stab_{\overline{\rho}_h}(o) = \Stab(o)$ is the Galois stabilizer of $o\subseteq \Hom((\Q/\Z)^*,T(\pi))$. This implies $L(s,\rho_h|_{\C o})$ is an $L$-function of degree
    \[
        [\overline{K}^{\overline{\rho}_h^{-1}(\Stab(o))}:\Q] = [G_K:\Stab(o)][K:\Q].
    \]
    S\"ohne's subconvexity bound for Hecke $L$-functions \cite{sohne1997} implies that
    \[
        \mu_{\rho_h|_{\C o}}(1/2) \le \frac{[G_K:\Stab(o)][K:\Q]}{6}
    \]
    for each $h\in H_{ur}^1(K,T(\pi)^*)$ and each orbit $o\subseteq \Hom((\Q/\Z)^*,T(\pi))$. We remark that $\ind(o) = 0$ if and only if $o=\{1\}$, in which case $L(\ind(0)s,\rho_h|_{\C o}) = \zeta(0)$ is a nonzero constant. Plugging this bound into Theorem \ref{thm:best_bound} gives
    \begin{align*}
        \theta(K,T(\pi)) &\le \frac{1}{a(T)} - \frac{\displaystyle\frac{1}{a(T)} - \frac{1}{D}}{\displaystyle 1+\sum_{\substack{o\subseteq \Hom((\Q/\Z)^*,T(\pi))\\0 < \ind(o) < D}} \frac{[G_K:\Stab(o)][K:\Q]}{3}\left(1-\frac{\ind(o)}{D}\right)}.
    \end{align*}
    Orbit--stabilizer implies that
    \begin{align*}
        \theta(K,T(\pi)) &\le \frac{1}{a(T)} - \frac{\displaystyle\frac{1}{a(T)} - \frac{1}{D}}{\displaystyle 1+\sum_{\substack{o\subseteq \Hom((\Q/\Z)^*,T(\pi))\\0 < \ind(o) < D}} \frac{|o|[K:\Q]}{3}\left(1-\frac{\ind(o)}{D}\right)}\\
        &=\frac{1}{a(T)} - \frac{\displaystyle\frac{1}{a(T)} - \frac{1}{D}}{\displaystyle 1+\sum_{\substack{\alpha\in \Hom((\Q/\Z)^*,T(\pi))\\0 < \ind(\alpha) < D}} \frac{[K:\Q]}{3}\left(1-\frac{\ind(\alpha)}{D}\right)}.
    \end{align*}
    Choose an isomorphism $\Hom((\Q/\Z)^*,T(\pi))\cong T$ as groups which preserves the index of elements. This implies
    \begin{align*}
        \theta(K,T(\pi)) &\le \frac{1}{a(T)} - \frac{\displaystyle\frac{1}{a(T)} - \frac{1}{D}}{\displaystyle 1+\sum_{\substack{g\in T-\{1\}\\\ind(g) < D}} \frac{[K:\Q]}{3}\left(1-\frac{\ind(g)}{D}\right)},
    \end{align*}
    proving Corollary \ref{cor:uncond_twist}. Corollary \ref{cor:uncond} follows by taking $G=T$ and $\pi=1$, as $\theta(K,G) = \theta(K,G(1))$.
\end{proof}

\begin{proof}[Proof of Corollary \ref{cor:GLH} and Corollary \ref{cor:GLH_twist}]
    The generalized Lindel\"of Hypothesis implies that $\mu_{\rho_h|_{\C o}}(1/2) = 0$. Thus, taking $D=2a(T)$ in Theorem \ref{thm:best_bound} implies
    \[
        \theta(K,T(\pi)) \le \frac{1}{a(T)} - \frac{\displaystyle\frac{1}{a(T)} - \frac{1}{2a(T)}}{1+0} = \frac{1}{2a(T)}.
    \]
    This proves Corollary \ref{cor:GLH_twist}. Corollary \ref{cor:GLH} follows by taking $G=T$ and $\pi=1$.
\end{proof}

Theorem \ref{thm:best_bound} will follow from Theorem \ref{thm:xi+1} applied to $\widehat{w_s}(h)$ for each $h\in H^1(K,T(\pi))$ together with some Calculus to prove that the upper bound is convex.

\begin{proof}[Proof of Theorem \ref{thm:best_bound}]
    By definition, the absolute series for $\widehat{w_s}(h)$ is
    \[
    \frac{1}{|H^0(K,T(\pi))|}\prod_p \left(\sum_{f\in H^1(G_{K_p},T(\pi))} |\langle f, h|_{G_{K_p}}\rangle| |\disc(f*\pi)|^{-s}\right) = \widehat{w_s}(0)
    \]
    as the local Tate pairing is valued in the unit circle. Thus, for each $L(s) = \widehat{w_s}(h)$ we can take $\widehat{L}(s) = \widehat{w_s}(0)$. We know that $\widehat{\sigma}_a = 1/a(T)$ for this choice.
    
    We also know by Theorem \ref{thm:generating_mero_twist} that $\widehat{w_s}(h)$ is meromorphic on the right halfplane ${\rm Re}(s)>1/2a(T)$ with poles at $s=1/\ind(o)$. Thus, for each $\delta \in [0,1/2a(T)]$ and $\xi_h=\xi_h(\delta)$ for which
    \[
    |\widehat{w_s}(h)|,|\widehat{w_s}(0)| \ll (1+|t|)^{\xi_h+\epsilon}
    \]
    for all ${\rm Re}(s) > 1/a(G) - \delta$, Theorem \ref{thm:xi+1} implies that the sum of coefficients of $\widehat{w_s}(h)$ grows as
    \[
    \sum_{\substack{o\subseteq \Hom((\Q/\Z)^*,T(\pi))\\o\ne \{1\}}} \underset{s=1/\ind(o)}{\rm Res}\left(\widehat{w_s}(h)\frac{X^s}{s}\right) + O\left(X^{\frac{1}{a(G)} + \frac{\delta}{\xi_h+1}+\epsilon}\right).
    \]
    Taking a sum over $h\in H^1_{ur^{\perp}}(K,T(\pi)^*)$, this implies
    \begin{align*}
    \#H^1(K,T,\pi;X) =& \frac{|H^0(K,T(\pi))|}{|H^0(K,T(\pi)^*)|}\sum_{h\in H^1_{ur^{\perp}}(K,T(\pi)^*)}\sum_{\substack{o\subseteq \Hom((\Q/\Z)^*,T(\pi))\\o\ne \{1\}}} \underset{s=1/\ind(o)}{\rm Res}\left(\widehat{w_s}(h)\frac{X^s}{s}\right)\\
    &+ O\left(X^{\frac{1}{a(G)} + \frac{\delta}{\xi+1}+\epsilon}\right)\\
    =&\sum_{\substack{o\subseteq \Hom((\Q/\Z)^*,T(\pi))\\o\ne \{1\}}} \underset{s=1/\ind(o)}{\rm Res}\left(\frac{|H^0(K,T(\pi))|}{|H^0(K,T(\pi)^*)|}\sum_{h\in H^1_{ur^{\perp}}(K,T(\pi)^*)}\widehat{w_s}(h)\frac{X^s}{s}\right)\\
    &+ O\left(X^{\frac{1}{a(G)} + \frac{\delta}{\xi+1}+\epsilon}\right)
    \end{align*}
    where $\xi = \max_h \xi_h$.

    Sieving to surjective maps will sieve the Dirichlet series inside the residue to $D_{K,T(\pi)}(s)$. Note that if $L(s)$ is the generating series of crossed homomorphisms with $(f*\pi)(G_K) \subseteq H\subsetneq G$, then all that happens to the generating series is that it is missing terms. We can still take $\widehat{L}(s) = \widehat{w_s}(0)$ to bound the absolute series. This bound on the absolute series will be worse when $H$ is small, but this way we can use Theorem \ref{thm:xi+1} with the same value for $\widehat{\sigma}_a$. The poles of this generating series are in the same place of the same or smaller order as well, as restricting to the subgroup $H\subsetneq G$ will only modify Corollary \ref{cor:meromorphic_continuation} by removing any $L$-function factors for which $o\not\subseteq \Hom((\Q/\Z)^*,H(\pi))$. This will only make the vertical bounds smaller than $\xi_h$, so we can also use Theorem \ref{thm:xi+1} with the same value for $\xi_h$. The surjective sieve is then an alternating sum of asymptotic terms of the form given in Definition \ref{def:theta_twist} and error terms which are all $O(X^{\frac{1}{a(T)} - \frac{\delta}{\xi+1}+\epsilon})$. Thus, we have proven that
    \begin{align*}
    \#\Surj(G_K,T,\pi;X) &=\sum_{\substack{o\subseteq \Hom((\Q/\Z)^*,T(\pi))\\o\ne \{1\}}} \underset{s=1/\ind(o)}{\rm Res}\left(D_{K,T(\pi)}(s)\frac{X^s}{s}\right) + O\left(X^{\frac{1}{a(G)} + \frac{\delta}{\xi+1}+\epsilon}\right)
    \end{align*}

    For each $\delta\in [0,1/2a(T)]$, Corollary \ref{cor:w_vertical_bounds} implies that we can take
    \begin{align*}
        \xi = \xi(\delta) = \underset{h\in H_{ur^{\perp}}^1(K,T(\pi)^*)}{\max}\sum_{\substack{o\subseteq \Hom((\Q/\Z)^*,T(\pi))\\o\ne \{1\}}} 2\mu_{\rho_h|_{\C o}}(1/2)\max\left\{1-\frac{\ind(o)}{a(T)} + \ind(o)\delta,0\right\}.
    \end{align*}
    It is clear from this description that $\xi(\delta)$ is an increasing, continuous, piecewise linear function.

    Set $\theta(\delta) = \frac{1}{a(T)} - \frac{\delta}{\xi(\delta)+1}$. The first part of the Theorem follows immediately from setting $\delta = \frac{1}{a(T)} - \frac{1}{D}$ for each real $a(T)\le D < 2a(T)$ and noting that $\theta(K,T(\pi))\le \theta(\delta)$. For the case $D=2a(T)$, this follows from taking $\delta$ sufficiently close to $\frac{1}{a(T)} - \frac{1}{D}$ in terms of $\epsilon$ so that $\theta(K,T(\pi)) < \theta(\delta)+\epsilon$. It now remains to optimize $\theta(\delta)$ on the interval $a(T) \le D \le 2a(T)$ to prove the second part of the Theorem.
    
    $\theta(\delta)$ is continuous and piecewise differentiable, where the endpoints of the pieces are $0$, $\frac{1}{a(T)} - \frac{1}{\ind(o)}$ for each $o\subseteq \Hom((\Q/\Z)^*,T(\pi))$, and $\frac{1}{2a(T)}$.

    Restrict to the piece $\delta \in [1/a(T) - 1/n_1,1/a(T) - 1/n_2]$ for some $n_1>n_2$ coming from the set $\ind(T-\{1\}) \cup\{2a(T)\}$. On this piece,
    \[
    \theta(\delta) = \frac{1}{a(T)} - \frac{\delta}{m\delta+b}
    \]
    where
    \[
    b = b(n_1) = 1 + \sum_{\substack{o\subseteq \Hom((\Q/\Z)^*,T(\pi))\\0 < \ind(o) < n_1}} 2\mu_{\rho_h|_{\C o}}(1/2)\left(1-\frac{\ind(o)}{a(T)}\right).
    \]
    Differentiating on this piece gives
    \[
    \theta'(\delta) = \frac{-b(n_1)}{(m\delta+b(n_1))^2}.
    \]
    Thus, $\theta(\delta)$ is monotonic and in particular nondecreasing on this piece if and only if $b(n_1) \le 0$. It is clear that $b(n_1)$ is a decreasing function, as $\ind(o) \ge a(T)$ for all orbits $o$ so adding more terms to the sum amounts to adding extra negative numbers. Thus, $\theta'(\delta)$ is increasing in $n_1$.

    Given that $\frac{1}{a(T)} - \frac{1}{n_1}$ is increasing in $n_1$
    This proves that $\theta'(\delta)$ is increasing where it exists so that $\theta(\delta)$ is convex. The minimum value occurs at the endpoint of one of the pieces, namely an endpoint where $\theta'(\delta)$ switches sign. Thus, $f(D) = \theta(\frac{1}{a(G)} - \frac{1}{D})$ is a convex function whose minimum value occurs at one of $D\in \ind(T-\{1\})$ or $D=2a(T)$.
\end{proof}

\section{Obstructions to the Non-vanishing of Lower Order Terms}\label{sec:obstructions_lower_order}

We discuss the lower order terms in this section. This is primarily a discussion of the obstacles towards proving the lower order terms do not vanish, or equivalently proving that the poles of $D_{K,G}(s)$ in Theorem \ref{thm:generating_mero} do not vanish.

There are essentially three type of obstructions.
\begin{enumerate}[(1)]
\item Real zeros of $B(s)$ and of the Hecke $L$-functions that appear in Corollary \ref{cor:meromorphic_continuation} can reduce the order of the poles in Corollary \ref{cor:poles}. This is evident from the structure of Corollary \ref{cor:meromorphic_continuation}.

\item If $H_{ur^{\perp}}^1(K,T(\pi)^*)\ne 1$, then there may be cancellation between the poles of $\widehat{w_s}(h_1)$ and the poles of $\widehat{w_s}(h_2)$. This is tricky, as the residues of $\widehat{w_s}(h_j)\frac{X^s}{s}$ giving the lower order terms include special values of $L$-functions in the critical strip. The sum over $h$ of the coefficients in a potential lower order term $X^{1/d}P_{K,T(\pi),1/d}(\log X)$ will then become a linear combination of rational special values of an $L$-function times a convergent Euler product. Proving non-vanishing of linear combinations of $L$-functions at values in the critical strip is immensely difficult to do directly.

\item Cancellation between the poles during the sieve to surjective coclasses. For any $g\in T-\{1\}$ which does not generate $T$, the (potential) lower order term associated to the pole $s=1/\ind(g)$ will appear both in the term for all coclasses and in at least one other term of the sieve. This is an alternating sieve, and so becomes an alternating sieve of lower order terms that might cancel.
\end{enumerate}

In principle, these obstacles are all present for the main term with the exception that the main term does not involve any special values of $L$-functions in the critical strip. Non-cancellation of the main term is proven in a round-about way by giving a nonzero lower bound. By choosing restricted local conditions at finitely many places that avoid all three obstructions, one proves a lower bound of the right order of magnitude.

The same lower bound does not work for lower order terms. For example, suppose we knew that
\[
a_1X + b_1 X^{1/2} + O(X^{1/3}) \gg a_2 X + b_2 X^{1/2} + O(x^{1/3})
\]
for $a_2,b_2>0$. This does implies that $a_1\ge a_2 > 0$, but it does not necessarily say anything about $b_1$. Indeed, $2X \gg X+X^{1/2} + O(X^{1/3})$. When restricting local conditions at finitely many places, the leading coefficient necessarily changes which prevents us from making conclusions about the lower order terms.

It is not immediately clear to us that these obstructions can be overcome. The presence of a linear combination of special values of $L$-functions in the critical strip in particular gives us pause.

However, we can say more if we restrict to a special case. Suppose we know that $H_{ur^{\perp}}^1(K,T(\pi)^*) = 1$. This avoids the second obstruction entirely, and now the generating series of for $H^1(K,T(\pi))$ is given only by
\[
\frac{|H^0(K,T(\pi))|}{|H^0(K,T(\pi)^*)|}\widehat{w_s}(0),
\]
which is a single Euler product. Corollary \ref{cor:poles} gives the precise order of the poles for $\widehat{w_s}(0)$ in terms of the zeros of $L$-functions rather than an upper bound, also avoiding the first obstruction.

We can classify exactly when this holds for $G$ having the trivial Galois action.
\begin{lemma}\label{lem:QG}
    Let $G$ be an abelian group with the trivial Galois action. Then $H_{ur^{\perp}}^1(K,G^*)=1$ if and only if one of the following holds:
    \begin{enumerate}[(a)]
        \item $K=\Q$, or
        \item $G=C_2^n$ and $K$ has class number $1$.
    \end{enumerate}
\end{lemma}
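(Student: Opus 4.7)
The plan is to reduce to cyclic $G$ via the product decomposition, identify the Selmer group via Kummer theory, and characterize its triviality through the resulting exact sequence of units and class groups.

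Since $G$ is abelian with trivial action, I would write $G = \prod_i C_{n_i}$ so that $G^* = \prod_i \mu_{n_i}$ as Galois modules, and $H^1_{ur^\perp}(K,G^*) = \prod_i H^1_{ur^\perp}(K,\mu_{n_i})$. This reduces triviality of the total Selmer group to triviality of each cyclic factor, and the core question becomes: when is $H^1_{ur^\perp}(K,\mu_n) = 1$ for each $n$ dividing the exponent of $G$?

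For each such $n$, Kummer theory gives $H^1(K,\mu_n) \cong K^\times/(K^\times)^n$. Using the local Tate pairing computations of Section~\ref{sec:local_Tate}, I would translate the local condition $h|_{K_p} \in H^1_{ur}(K_p,C_n)^\perp$ into an explicit subgroup condition on Kummer classes in $K_p^\times/(K_p^\times)^n$: a valuation-divisibility condition at good finite primes, a positivity/triviality condition at archimedean places (nontrivial only when $n$ is even and $K_p = \mathbb{R}$, or at complex places by vanishing), and a more delicate condition at primes $p\mid n$ coming from local class field theory and the unramified cohomology subgroup.

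I would then organize these conditions into the Selmer sequence
\[
1 \to U/U^n \to H^1_{ur^\perp}(K,\mu_n) \to \mathrm{Cl}(K)[n] \to 0
\]
where $U \subseteq \mathcal{O}_K^\times$ is the unit subgroup cut out by the archimedean conditions (using the narrow class group and totally positive units when $n$ is even). Triviality of the Selmer group is then equivalent to simultaneous vanishing of the unit quotient and the class group torsion. For $K = \Q$, the totally positive units of $\Z$ are trivial and $\mathrm{Cl}(\Q) = 1$, so both vanish for every $n$, giving half of the biconditional. For $K \neq \Q$, the unit quotient decomposes as $\mu(K)/\mu(K)^n \times (\Z/n)^r$ with $r = r_1 + r_2 - 1$ the Dirichlet rank (adjusted for positivity), and its vanishing forces $r = 0$ (so $K$ must be imaginary quadratic) together with $\gcd(|\mu(K)|,n)$ in the right range; pairing this with the requirement $\gcd(h(K),n) = 1$ from the class-group side and running the case analysis isolates the surviving cases as $G = C_2^n$ with $h(K) = 1$.

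The main obstacle will be the careful treatment of the Selmer condition at primes $p \mid n$: the naive ``valuation divisible by $n$'' description can fail due to wild ramification at these primes, and obtaining the correct refined condition requires the explicit local Tate duality description from Section~\ref{sec:local_Tate}, together with carefully tracking the interaction between the archimedean positivity conditions and the narrow-vs-ordinary class group distinction so that the two flanking terms in the Selmer sequence simultaneously vanish only in the two cases in the statement.
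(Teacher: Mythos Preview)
Your approach via Kummer theory and an explicit Selmer exact sequence is a genuinely different route from the paper's, which instead applies the Greenberg--Wiles identity to obtain a closed product formula for $|H^1_{ur^\perp}(K,G^*)|$ directly in terms of $|G|$, $|G[2]|$, $|\mu(K)|$, $r_1(K)$, $r_2(K)$, and $|\Hom(\mathrm{Cl}(K),G)|$, after which the case analysis is short. The Greenberg--Wiles route never needs to identify the Selmer group as a concrete subquotient of $K^\times$, and so sidesteps the delicate unit and class group bookkeeping you propose.

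Your proposal contains a genuine gap in the exact sequence. For even $n$ with the totally positive condition at real places, the kernel of the map $H^1_{ur^\perp}(K,\mu_n)\to \mathrm{Cl}(K)[n]$ is $U_+/(\mathcal{O}_K^\times)^n$, not $U_+/U_+^n$: a totally positive unit can be the $n$-th power of a unit that is not itself totally positive. For instance, with $K=\Q(\sqrt 2)$ and $n=2$ one has $U_+ = \langle(1+\sqrt 2)^2\rangle = (\mathcal{O}_K^\times)^2$, so the correct kernel is trivial, whereas your $U_+/U_+^2\cong \Z/2\Z$. This error drives your false conclusion that vanishing of the unit term forces $r_1+r_2-1=0$ and hence that $K$ must be imaginary quadratic, which is incompatible with case (b) of the lemma: totally real $K$ with trivial narrow class group and $G=C_2^n$ do satisfy $H^1_{ur^\perp}=1$. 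Separately, the Selmer condition at primes $p\mid n$ is not the obstacle you anticipate: the pairing with unramified characters depends only on $v_p(\alpha)\bmod n$ via local reciprocity, so the condition there is the same valuation-divisibility condition as at tame primes. The real subtlety is the archimedean/unit interaction above, and it is precisely what the Greenberg--Wiles formula packages cleanly.
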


In these cases, we completely avoid the first two obstructions and only need to worry about the sieve to surjective coclasses (which in this case that $G$ has the trivial Galois action would be the sieve from \'etale algebras to field extensions). Our explicit work in the following subsection suggests that the sieve to surjective coclasses is unlikely to cancel out the poles of $\widehat{w_s}(0)$. These facts are what we consider to be convincing evidence for Conjecture \ref{conj:Q_order_of_poles}.

\begin{proof}[Proof of Lemma \ref{lem:QG}]
    This follows from the Greenberg--Wiles identity \cite{wiles1995}
    \[
    \frac{|H^1_{\mathcal{L}}(K,G)|}{|H^1_{\mathcal{L}^{\perp}}(K,G^*)|} = \frac{|H^0(K,G)|}{|H^0(K,G^*)|}\prod_p \frac{|L_p|}{|H^0(K_p,G)|}.
    \]
    Letting $\mathcal{L} = (L_p)$ be the family $L_p = H_{ur}^1(K_p,G)$ at all finite $p$ and $L_p = H^1(K_p,G)$ at all infinite places.

    The Selmer group is given by
    \[
    H_{\mathcal{L}}(\Q,G) = H_{ur}^1(K,G) = \Hom({\rm Cl}(K),G)
    \]
    for ${\rm Cl}(K)$ the narrow class group of $K$. The dual Selmer group is precisely $H_{ur^{\perp}}^1(K,G^*)$ by construction.

    We know that $H^0(K,G) = |G|$ as $G$ has the trivial action. $H^0(K,G^*)=[G:|\mu(K)|G]$ as the only fixed points of $G^*=\Hom(G,\mu)$ are those $\alpha$ which land in $\mu\cap K = \mu(K)$.

    At the finite places, $|L_p|=|H_{ur}^1(K_p,G)| = |G| = |H^0(K_p,G)|$. At infinite places,
    \begin{align*}
        |L_p|=\begin{cases}
            1 & p\text{ complex}\\
            |G[2]| & p\text{ real}.
        \end{cases}
    \end{align*}

    Plugging these all in implies
    \begin{align*}
        \frac{|\Hom({\rm Cl}(K),G)|}{|H^1_{ur^{\perp}}(K,G^*)|} = \frac{|G|}{[G:|\mu(K)|G]}\frac{|G[2]|^{r_1(K)}}{|G|^{r_1(K) + r_2(K)}}
    \end{align*}
    for $r_1(K)$ and $r_2(K)$ the number of real (resp. complex) places of $K$. This implies
    \begin{align*}
        |H^1_{ur^{\perp}}(K,G^*)| &= |G|^{r_1(K) + r_2(K) - 1}\frac{[G:|\mu(K)|G]}{|G[2]|^{r_1(K)}}|\Hom({\rm Cl}(K),G)|.
    \end{align*}
    If $K=\Q$, then ${\rm Cl}(\Q) = 1$, $r_1(\Q) = 1$, $r_2(\Q) = 0$, and $|\mu(\Q)|=2$. Given that $|G[2]| = [G:2G]$, we conclude that $|H^1_{ur^{\perp}}(\Q,G^*)|=1$.

    If $K\ne \Q$, then either $r_2(K) \ge 1$ or $r_1(K)\ge 2$. If $r_2(K) \ge 1$ then
    \begin{align*}
        |H^1_{ur^{\perp}}(K,G^*)| &\ge [G:G[2]]^{r_1(K)}[G:|\mu(K)|G]|\Hom({\rm Cl}(K),G)| > 1,
    \end{align*}
    where the final inequality follows from knowing that $2\mid |\mu(K)|$. If $r_2(K) = 0$ and $r_1(K) \ge 2$, then
    \begin{align*}
        |H^1_{ur^{\perp}}(K,G^*)| &= |G|^{-1}[G:G[2]]^{r_1(K)}[G:|\mu(K)|G]|\Hom({\rm Cl}(K),G)|\\
        &= [G:G[2]]^{r_1(K)-1}[2G:|\mu(K)|G]|\Hom({\rm Cl}(K),G)|.
    \end{align*}
    This is strictly larger than $1$ unless $G=G[2]$ and $K$ has trivial class group.
\end{proof}

\section{Some Examples of Non-vanishing of Lower Order Terms}\label{sec:examples_lower_order}

When $K=\Q$, there are some tricks we can employ to avoid or greatly simplify the obstructions in Section \ref{sec:obstructions_lower_order}. This is how we prove Theorem \ref{thm:nonvanishing} and Corollary \ref{cor:C4andC6}.

\subsection{An Inclusion-Exclusion Lemma for Sieving Euler Products}

We first prove a lemma for an inclusion-exclusion sieve of convergent Euler products. This will be applied to show that the sieve to surjective coclasses does not cancel out certain poles, as long as the sieve does not involve too many special values of $L$-functions in the critical strip.

\begin{lemma}\label{lem:poset_nonvanishing}
Let $(P,\le)$ be a poset with M\"obius function $\mu_P$ in which all joins exist, and $f:P\times \N\to \R_{\ge 0}$ a nonegative function for which $f(x,n)$ is multiplicative in $n$ and supported on the squarefree integers for each $x\in P$.

Let $x\in P$ such that $\#\{y\le x : \mu_P(y,x)\ne 0\}$ is finite and suppose that
\[
\prod_p\left(1+\sum_{y\le x} f(y,p)\right)
\]
converges. Then
\[
\sum_{z\le x} \mu_P(z,x) \prod_p\left(1+\sum_{y\le z} f(y,p)\right) = \left(\sum_{z\le x} \mu_P(z,x)\right) + \sum_{n=2}^\infty \mu(n)^2 \sum_{\substack{(y_p)_{p\mid n}\\\vee_{p\mid n} y_p = x}} \prod_{p\mid n} f(y_p,p).
\]
In particular, this is positive if and only if $x$ is minimal or there exists at least one term $\prod_{p\mid n} f(y_p,p) > 0$ with $\vee_{p\mid n} y_p = x$.
\end{lemma}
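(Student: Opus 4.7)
The plan is to expand each Euler product as a formal sum over squarefree integers, swap the (finite) $z$-sum past the (infinite) sums over $n$ and tuples $(y_p)_{p\mid n}$, and then apply the defining identity of $\mu_P$ to collapse the resulting inner sum into an indicator on joins.

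For each $z\le x$, multiplicativity and squarefree support give the expansion
\[
\prod_p\left(1+\sum_{y\le z}f(y,p)\right) = \sum_{n=1}^{\infty}\mu(n)^2\sum_{\substack{(y_p)_{p\mid n}\\ y_p\le z\,\forall p\mid n}}\prod_{p\mid n} f(y_p,p),
\]
the $n=1$ term being the empty product $1$. Nonnegativity of $f$ together with the convergence hypothesis shows that the right-hand series converges absolutely even when $y_p$ is allowed to range over all $y\le x$; since only finitely many $z\le x$ contribute to the outer sum (by the hypothesis on $\{y\le x:\mu_P(y,x)\ne 0\}$), I may interchange the order of summation without further justification.

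After the interchange, for each $n\ge 2$ and each admissible tuple $(y_p)$ the inner $z$-sum is
\[
\sum_{\substack{z\le x\\ y_p\le z\,\forall p\mid n}}\mu_P(z,x).
\]
Because all joins in $P$ exist, the joint condition "$y_p\le z$ for every $p\mid n$" is equivalent to $\bigvee_{p\mid n}y_p\le z$, so the range is $\bigvee y_p\le z\le x$; the defining identity $\sum_{a\le z\le x}\mu_P(z,x)=\delta_{a,x}$ then collapses this inner sum to $\bigl[\bigvee_{p\mid n}y_p=x\bigr]$. The $n=1$ piece imposes no constraint on $z$ and contributes exactly $\sum_{z\le x}\mu_P(z,x)$, which the statement records unsimplified. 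Reassembling these pieces produces the claimed identity.

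The positivity claim is then essentially syntactic: every $n\ge 2$ summand equals $\mu(n)^2\prod_{p\mid n}f(y_p,p)\ge 0$ because $\mu(n)^2\in\{0,1\}$ and $f\ge 0$, and in any poset whose principal downset at $x$ admits a minimum (which covers the applications in the paper, where $P$ is a subgroup lattice with $\hat 0$ the trivial subgroup) the quantity $\sum_{z\le x}\mu_P(z,x)$ equals $\mu_P(x,x)=1$ when $x$ is minimal and vanishes otherwise. The only genuine obstacle is the justification of the sum-swap, which is immediate from nonnegativity of $f$ and the finiteness of the $z$-support; the remainder is a direct application of M\"obius inversion on $P$.
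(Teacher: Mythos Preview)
Your proof is correct and follows essentially the same route as the paper's: expand each Euler product over squarefree $n$, swap the finite $z$-sum past the absolutely convergent sums, and collapse the inner $z$-sum via the defining M\"obius identity after rewriting the simultaneous constraints $y_p\le z$ as $\bigvee_{p\mid n}y_p\le z$. The only organizational difference is that the paper treats the case ``$x$ minimal'' separately at the outset, whereas you handle both cases uniformly by leaving the $n=1$ contribution as the unsimplified $\sum_{z\le x}\mu_P(z,x)$; your remark that this term vanishes for non-minimal $x$ precisely when the downset below $x$ has a minimum is exactly the paper's implicit use of the empty join.
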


\begin{proof}
    All of the Euler products in the statement are absolutely convergent as $z\le x$ implies
    \[
    \prod_p\left(1+\sum_{y\le z} f(y,p)\right) \le \prod_p\left(1+\sum_{y\le x} f(x,p)\right) < \infty.
    \]
    First suppose $x\in P$ is minimal, that is $\#\{y\in P : y< x\} = 0$. Then
    \begin{align*}
        \sum_{z\le x} \mu_P(z,x) \prod_p\left(1+\sum_{y\le z} f(y,p)\right) &= \prod_p(1 + f(x,p))\\
        &= \sum_{n=1}^{\infty} \mu(n)^2 f(x,p)\\
        &= \left(\sum_{z\le x}\mu_P(z,x)\right) + \sum_{n=2}^{\infty} \mu(n)^2 f(x,p).
    \end{align*}

    For the remainder of the proof, suppose that $x\in P$ is not minimal. We distribute the Euler products to write
    \begingroup
    \allowdisplaybreaks
    \begin{align*}
        \sum_{z\le x} \mu_P(z,x) \prod_p\left(1+\sum_{y\le z} f(y,p)\right) &= \sum_{z\le x} \mu_P(z,x) \sum_{n=1}^{\infty} \sum_{\substack{(y_p)_{p\mid n}\\y_p\le z}} \mu(n)^2\prod_{p\mid n} f(y_p,p)\\
        &=\sum_{n=1}^{\infty}\mu(n)^2 \sum_{z\le x} \sum_{\substack{(y_p)_{p\mid n}\\y_p\le z}}\mu_P(z,x)\prod_{p\mid n} f(y_p,p)\\
        &=\sum_{n=1}^{\infty}\mu(n)^2 \sum_{\substack{(y_p)_{p\mid n}\\y_p\le x}}\left(\sum_{\substack{z\ge y_p\\\text{for each }p\mid n}} \mu_P(z,x)\right)\prod_{p\mid n} f(y_p,p)\\
        &=\sum_{n=1}^{\infty}\mu(n)^2 \sum_{\substack{(y_p)_{p\mid n}\\y_p\le x}}\left(\sum_{z\ge \vee_{p\mid n} y_p} \mu_P(z,x)\right)\prod_{p\mid n} f(y_p,p).
    \end{align*}
    \endgroup
    The M\"obius function of a poset satisfies
    \[
    \sum_{z\ge y}\mu_P(z,x) = \begin{cases}
        1 & y=x\\
        0 & y\ne x.
    \end{cases}
    \]
    Thus, the expression simplifies to
    \begin{align*}
        \sum_{z\le x} \mu_P(z,x) \prod_p\left(\sum_{y\le z} f(y,p)\right) &=\sum_{n=2}^{\infty}\mu(n)^2 \sum_{\substack{(y_p)_{p\mid n}\\\vee_{p\mid n} y_p=x}}\prod_{p\mid n} f(y_p,p)
    \end{align*}
    Note that that starting index changes to a $2$ because when $n=1$, $\vee_{p\mid n}y_p$ is an empty join. Thus, the sum is over all $z$ or equivalently over all finitely many $z\le x$ with $\mu_P(z,x)\ne 0$. As $x$ is not minimal, this term cancels out.
\end{proof}

\subsection{Proving Theorem \ref{thm:nonvanishing}}

Lemma \ref{lem:QG} implies that $H_{ur}^1(\Q,G^*)=1$, so that the generating series for $G$-\'etale algebras order by dscriminant is
\[
\frac{|H^0(\Q,G)|}{|H^0(\Q,G^*)|}\widehat{w_s}(0) = \prod_{p<\infty} \left(\frac{1}{|G|}\sum_{f_p\in \Hom(G_{\Q_p},G)} |\disc(f_p)|^{-s}\right).
\]
This factors according to Corollary \ref{cor:meromorphic_continuation} as
\[
B_G(s)\prod_{o\subseteq \Hom((\Q/\Z)^*,G)} \zeta_{\overline{\Q}^{{\rm Stab}(o)}}(\ind(o)s),
\]
where $B_G(s)$ is an absolutely convergent Euler product on the region ${\rm Re}(s) > 1/2a(G)$. This is because $\rho_0=\overline{\rho}_0$ in Definition \ref{def:twisted_perm_rep} is the true permutation representation associated to the Galois action on $\Hom((\Q/\Z)^*,G)$. Notice that if $g\in o$ has order $m$, then $\overline{\Q}^{{\rm Stab}(o)} = \Q(\zeta_m)$. Thus, Corollary \ref{cor:poles} implies this series has poles at $s=1/d$ for each $d\in \ind(G-\{1\})$ of order exactly $\bar{b}_d(\Q,G)$.

The sieve to $G$-extensions from \'etale algebras is precisely the sieve to surjective homomorphisms $\Hom(G_\Q,G)$ to $\Surj(G_\Q,G)$. Let $\mu_G$ be the M\"obius function for the poset of subgroups of $G$, then
\[
D_{\Q,G}(s) = \sum_{H\le G} \mu_G(H,G) B_H(s)\prod_{g\in H-\{1\}} \zeta_{\Q(\zeta_{|\langle g\rangle|})}(\ind(o)s)^{1/[\Q(\zeta_{|\langle g\rangle|}):\Q]}.
\]
Conjecture \ref{conj:Q_order_of_poles} follows from this expression. The M\"obius function $\mu_G(H,G)$ is supported on $H$ contianing the Frattini subgroup. The summand corresponding to $H$ has a pole at $s=1/d$ of order exactly $\bar{b}_{d/[G:H]}(\Q,H)$. Conjecture \ref{conj:Q_order_of_poles} is a prediction that none of these poles cancel out in the sum.

Each part of Theorem \ref{thm:nonvanishing} is proven by analyzing this expression, and showing that the relevant pole of a single summand does not cancel out in the sieve.

We will actually prove Theorem \ref{thm:nonvanishing}(iv) first, as the proof is many times shorter.
\begin{proof}[Proof of Theorem \ref{thm:nonvanishing}(iv)]
The elements of $G=C_n$ which have index $n-1$ are precisely the generators. Thus, the only subgroup $H\le G$ that contain an element of index $n-1$ is $H=G$ itself. Thus, from the expression
\[
D_{\Q,C_n}(s) = \sum_{H\le C_n} \mu_{C_n}(H,C_n) B_H(s)\prod_{g\in H-\{1\}} \zeta_{\Q(\zeta_{|\langle g\rangle|})}(\ind(o)s)^{1/[\Q(\zeta_{|\langle g\rangle|}):\Q]}
\]
only the term associated to $H=C_n$ has a pole at $s=\frac{1}{n-1}$. This implies
\begin{align*}
&\lim_{s\to \frac{1}{n-1}}\left(s-\frac{1}{n-1}\right)^{\bar{b}_{n-1}(\Q,C_n)} D_{\Q,C_n}(s)\\
&= \lim_{s\to \frac{1}{n-1}}\left(s-\frac{1}{n-1}\right)^{\bar{b}_{n-1}(\Q,C_n)}B_{C_n}(s)\prod_{g\in C_n-\{1\}} \zeta_{\Q(\zeta_{|\langle g\rangle|})}(\ind(o)s)^{1/[\Q(\zeta_{|\langle g\rangle|}):\Q]}\\
&\ne 0
\end{align*}
by construction.
\end{proof}

This is a nifty trick, which has been used to show the leading term does not vanish when abelian extensions are ordered by the product of ramified primes. Essentially, only one term from the surjective sieve has a pole at the location of interest, so there is no chance of cancellation.

For the remaining parts, we leverage Lemma \ref{lem:poset_nonvanishing} to prove that the poles do not cancel. In parts (i,ii), \emph{every} term of the surjective sieve contributes the same zeta factors. By factoring these out, we are able to prove noncancellation of the pole.

\begin{proof}[Proof of Theorem \ref{thm:nonvanishing}(i,ii)]
We remark that part (i) actually follows from part (ii), as
\[
\{g\in G-\{1\} : \ind(g) < a(G)\} = \emptyset
\]
by definition, and $\emptyset \subseteq \Phi(G)$ is vacuously true.

The M\"obius function $\mu_G(H,G)$ for subgroups of $G$ is supported on subgroups containing the Frattini subgroup. Thus, we can factor several zeta functions out of the sum over $H\le G$. This proves that the product
\[
D_{\Q,G}(s)\prod_{\substack{g\in G-\{1\}\\\ind(g) < d}}\zeta_{\Q(\zeta_{|\langle g\rangle|})}(\ind(g)s)^{-\frac{b_{\ind(g)}(\Q,G)}{[\Q(\zeta_{|\langle g\rangle|}):\Q]}}
\]
is equal to
\begin{align*}
&\sum_{H\le G} \mu_G(H,G)\prod_{p<\infty}\left(\prod_{\substack{g\in G-\{1\}\\\ind(g) < d}}\left(1 - p^{-\ind(g)sf_p(\Q(\zeta_{|\langle g\rangle|})/\Q)}\right)^{\frac{b_{\ind(g)}(\Q,G)g_p(\Q(\zeta_{|\langle g\rangle|})/\Q)}{[\Q(\zeta_{|\langle g\rangle|}):\Q]}}\right)\\
&\times\left(\frac{1}{|H|}\sum_{f_p\in \Hom(G_{\Q_p},H)} |\disc(f_p)|^{-s}\right)\\
=&\sum_{H\le G} \mu_G(H,G)\prod_{p<\infty}\left(\prod_{\substack{g\in G-\{1\}\\\ind(g) < d}}\left(1 - p^{-\ind(g)sf_p(\Q(\zeta_{|\langle g\rangle|})/\Q)}\right)^{\frac{b_{\ind(g)}(\Q,G)g_p(\Q(\zeta_{|\langle g\rangle|})/\Q)}{[\Q(\zeta_{|\langle g\rangle|}):\Q]}}\right)\\
&\times\left(\frac{1}{|G|}\sum_{\substack{f_p\in \Hom(G_{\Q_p},G)\\f_p(I_p)\subseteq H}} |\disc(f_p)|^{-s}\right)
\end{align*}
Corollary \ref{cor:meromorphic_continuation} implies that these products are absolutely convergent for ${\rm Re}(s) \ge 1/d$. (In fact, for ${\rm Re}(s) > 1/m$ for $m\in \ind(G-\{1\})$ the smallest index with $d<m$).

Define the function $f:{\rm Sub}(G) \times \N \to \C$ so that $n\mapsto f(J,n)$ is multiplicative, $f(J,p^j)=0$ for each $j>1$, $f(1,p)=0$ for the trivial subgroup, and
\[
f(J,p) = \left(\prod_{\substack{g\in G-\{1\}\\\ind(g) < d}}\left(1 - p^{-\frac{\ind(g)f_p(\Q(\zeta_{|\langle g\rangle|})/\Q)}{d}}\right)^{\frac{b_{\ind(g)}(\Q,G)g_p(\Q(\zeta_{|\langle g\rangle|})/\Q)}{[\Q(\zeta_{|\langle g\rangle|}):\Q]}}\right)\left(\frac{1}{|G|}\sum_{\substack{f_p\in \Hom(G_{\Q_p},G)\\f_p(I_p) = J}} |\disc(f_p)|^{-1/d}\right)
\]
otherwise. This is chosen so that
\begin{align*}
\lim_{s\to 1/d}D_{\Q,G}(s)\prod_{\substack{g\in G-\{1\}\\\ind(g) < d}}\zeta_{\Q(\zeta_{|\langle g\rangle|})}(\ind(g)s)^{-\frac{b_{\ind(g)}(\Q,G)}{[\Q(\zeta_{|\langle g\rangle|}):\Q]}}&= \sum_{H\le G} \mu_G(H,G)\prod_{p<\infty} \left(1 + \sum_{J\le H} f(J,p)\right),
\end{align*}
where we already know the limit exists by Theorem \ref{thm:generating_mero}. The function $f$ satisfies the hypotheses of Lemma \ref{lem:poset_nonvanishing} by Corollary \ref{cor:meromorphic_continuation}. Thus, Lemma \ref{lem:poset_nonvanishing} implies that
\begin{align*}
\lim_{s\to 1/d} D_{\Q,G}(s)\prod_{\substack{g\in G-\{1\}\\\ind(g) < d}}\zeta_{\Q(\zeta_{|\langle g\rangle|})}(\ind(g)s)^{-\frac{b_{\ind(g)}(\Q,G)}{[\Q(\zeta_{|\langle g\rangle|}):\Q]}}&= \sum_{n=2}^{\infty}\mu(n)^2\sum_{\substack{(J_p)_{p\mid n}\\\prod J_p = G}}\prod_{p\mid n} f(J_p,p).
\end{align*}
To prove part (ii), it suffices to prove that this limit is nonzero. As this is a sum of nonegative terms, we can bound the limit below by a single nonzero term to complete the proof. Let $(p_g)_{g\in G-\{1\}}$ be a sequence of distinct primes congruent to $1\pmod {|\langle g\rangle|}$. Thus
\begin{align*}
    &\lim_{s\to 1/d} D_{\Q,G}(s)\prod_{\substack{g\in G-\{1\}\\\ind(g) < d}}\zeta_{\Q(\zeta_{|\langle g\rangle|})}(\ind(g)s)^{-\frac{b_{\ind(g)}(\Q,G)}{[\Q(\zeta_{|\langle g\rangle|}):\Q]}}\\
    &\ge \prod_{g\in G-\{1\}} f_\sigma(\langle g\rangle, p_g)\\
    &= \prod_{g\in G-\{1\}}\left(\prod_{\substack{h\in G-\{1\}\\\ind(h) < d}}\left(1 - {p_g}^{-\frac{\ind(h)}{d}}\right)^{b_{\ind(h)}(\Q,G)}\right)\left(\frac{1}{|G|}\sum_{\substack{f_{p_g}\in \Hom(G_{\Q_{p_g}},G)\\f_p(I_p) = \langle g\rangle}} |\disc(f_{p_g})|^{-1/d}\right)\\
    &= \prod_{g\in G-\{1\}}\left(\prod_{\substack{h\in G-\{1\}\\\ind(h) < d}}\left(1 - {p_g}^{-\frac{\ind(h)}{d}}\right)^{b_{\ind(h)}(\Q,G)}\right)p^{-\frac{\ind(g)}{d}}\\
    & > 0.
\end{align*}
This concludes the proof of part (ii), as we know that
\[
\prod_{\substack{g\in G-\{1\}\\\ind(g) < d}}\zeta_{\Q(\zeta_{|\langle g\rangle|})}(\ind(g)s)^{\frac{b_{\ind(g)}(\Q,G)}{[\Q(\zeta_{|\langle g\rangle|}):\Q]}}
\]
has a pole of order $\bar{b}_d(\Q,G)$ at $s=1/d$ by construction.
\end{proof}

For part (iii), we allow a slightly more complicated scenario - the is a $\zeta(a(G)s)$ factor that will appear on some terms of the sieve, and not on other terms. But otherwise, all Dedekind zeta functions that appear will appear on all terms of the sieve and can be factored out. We then take advantage of the fact that $\zeta(x) < 0$ for all real $x\in [0,1)$.

\begin{proof}[Proof of Theorem \ref{thm:nonvanishing}(iii)]
The start of the proof is similar to Theorem \ref{thm:nonvanishing}(ii), except now we know that for $G=C_2\times A$ each element $g\in G-\{1\}$ with $\ind(g) < d$ is \emph{either} (1) contained in the Frattini subgroup \emph{or} (2) of order $2$, so that $\ind(g) = a(G)$.

We factor out the zeta functions corresponding to $g$ with $a(G) < \ind(g) < d$ similarly to part (ii), so that
\[
D_{\Q,G}(s)\prod_{\substack{g\in G-\{1\}\\a(G) < \ind(g) < d}}\zeta_{\Q(\zeta_{|\langle g\rangle|})}(\ind(g)s)^{-\frac{b_{\ind(g)}(\Q,G)}{[\Q(\zeta_{|\langle g\rangle|}):\Q]}}
\]
is equal to
\begin{align*}
    \zeta(a(G)s)\sum_{\substack{H\le G\\C_2\subseteq H}}\mu_G(H,G)\prod_{p<\infty}&\left(\prod_{\substack{g\in G-\{1\}\\\ind(g) < d}}\left(1 - p^{-\ind(g)sf_p(\Q(\zeta_{|\langle g\rangle|})/\Q)}\right)^{\frac{b_{\ind(g)}(\Q,G)g_p(\Q(\zeta_{|\langle g\rangle|})/\Q)}{[\Q(\zeta_{|\langle g\rangle|}):\Q]}}\right)\\
    &\times\left(\frac{1}{|G|}\sum_{\substack{f_p\in \Hom(G_{\Q_p},G)\\f_p(I_p)\subseteq H}} |\disc(f_p)|^{-s}\right)\\
    + \sum_{\substack{H\le G\\C_2\not\subseteq H}}\mu_G(H,G)\prod_{p<\infty}&\left(\prod_{\substack{g\in G-\{1\}\\a(G)<\ind(g) < d}}\left(1 - p^{-\ind(g)sf_p(\Q(\zeta_{|\langle g\rangle|})/\Q)}\right)^{\frac{b_{\ind(g)}(\Q,G)g_p(\Q(\zeta_{|\langle g\rangle|})/\Q)}{[\Q(\zeta_{|\langle g\rangle|}):\Q]}}\right)\\
    &\times\left(\frac{1}{|G|}\sum_{\substack{f_p\in \Hom(G_{\Q_p},G)\\f_p(I_p)\subseteq H}} |\disc(f_p)|^{-s}\right)
\end{align*}
Corollary \ref{cor:meromorphic_continuation} implies that the infinite products given here are convergent for ${\rm Re}(s) \ge 1/d$.

We partition the poset of subgroups of $G$ into two separate posets: the poset of subgroups containing $C_2$, and the poset of subgroups not containing $C_2$. We address each of the two sums in the way we proved part (ii). Define $f_0:\{C_2\le H\le G\}\times\N \to \R_{\ge 0}$ to be multiplicative with $f_0(J,p^j)=0$ for $j>1$, $f_0(1,p)=0$, and
\[
f_0(J,p) = \left(\prod_{\substack{g\in G-\{1\}\\\ind(g) < d}}\left(1 - p^{-\ind(g)sf_p(\Q(\zeta_{|\langle g\rangle|})/\Q)}\right)^{\frac{b_{\ind(g)}(\Q,G)g_p(\Q(\zeta_{|\langle g\rangle|})/\Q)}{[\Q(\zeta_{|\langle g\rangle|}):\Q]}}\right)\left(\frac{1}{|G|}\sum_{\substack{f_p\in \Hom(G_{\Q_p},G)\\f_p(I_p)C_2=J}} |\disc(f_p)|^{-s}\right)
\]
otherwise. Define $f_1:\{H\le A\}\times\N \to \R_{\ge 0}$ to be multiplicative with $f_0(J,p^j)=0$ for $j>1$, $f_1(1,p)=0$, and
\[
f_1(J,p) = \left(\prod_{\substack{g\in G-\{1\}\\a(G) < \ind(g) < d}}\left(1 - p^{-\ind(g)sf_p(\Q(\zeta_{|\langle g\rangle|})/\Q)}\right)^{\frac{b_{\ind(g)}(\Q,G)g_p(\Q(\zeta_{|\langle g\rangle|})/\Q)}{[\Q(\zeta_{|\langle g\rangle|}):\Q]}}\right)\left(\frac{1}{|G|}\sum_{\substack{f_p\in \Hom(G_{\Q_p},G)\\f_p(I_p)=J}} |\disc(f_p)|^{-s}\right)
\]
otherwise. Taking these together, the limit
\[
\lim_{s\to 1/d} D_{\Q,G}(s)\prod_{\substack{g\in G-\{1\}\\a(G) < \ind(g) < d}}\zeta_{\Q(\zeta_{|\langle g\rangle|})}(\ind(g)s)^{-\frac{b_{\ind(g)}(\Q,G)}{[\Q(\zeta_{|\langle g\rangle|}):\Q]}}
\]
exists by Theorem \ref{thm:generating_mero}, and is equal to
\begin{align*}
\zeta\left(\frac{a(G)}{d}\right)\sum_{\substack{H\le G\\C_2\subseteq H}}\mu_G(H,G)\prod_{p<\infty}\left(1 + \sum_{J\le H}f_0(J,p)\right) + \sum_{\substack{H\le G\\C_2\not\subseteq H}}\mu_G(H,G)\prod_{p<\infty}\left(1 + \sum_{J\le H}f_0(J,p)\right).
\end{align*}
Lemma \ref{lem:poset_nonvanishing} implies that
\begin{align*}
    \sum_{\substack{H\le G\\C_2\subseteq H}}\mu_G(H,G)\prod_{p<\infty}\left(1 + \sum_{J\le H}f_0(J,p)\right) &= \sum_{n=2}^{\infty}\mu(n)^2 \prod_{\substack{(J_p)_{p\mid n}\\\prod_p J_p = G}} \prod_{p\mid n} f_0(J_p,p)\\
    & > 0
\end{align*}
by isolating a single nonzero term in the same way as the proof of part (ii). Similarly, the structure $G=C_2\times A$ for $|A|$ odd implies
\begin{align*}
    \sum_{\substack{H\le G\\C_2\not\subseteq H}}\mu_G(H,G)\prod_{p<\infty}\left(1 + \sum_{J\le H}f_1(J,p)\right) &= -\sum_{\substack{H\le A}}\mu_G(H,A)\prod_{p<\infty}\left(1 + \sum_{J\le A}f_1(J,p)\right)\\
    &=-\sum_{n=2}^{\infty}\mu(n)^2 \prod_{\substack{(J_p)_{p\mid n}\\\prod_p J_p = A}} \prod_{p\mid n} f_1(J_p,p)\\
    & < 0
\end{align*}
by isolating a single term in the same way as the proof of part (ii).

Finally, we know that $a(g)/d\in [0,1)$ implies that $\zeta(a(G)/d) < 0$, so that the limit is a sum of two negative values
\begin{align*}
\zeta\left(\frac{a(G)}{d}\right)\sum_{\substack{H\le G\\C_2\subseteq H}}\mu_G(H,G)\prod_{p<\infty}\left(1 + \sum_{J\le H}f_0(J,p)\right) + \sum_{\substack{H\le G\\C_2\not\subseteq H}}\mu_G(H,G)\prod_{p<\infty}\left(1 + \sum_{J\le H}f_0(J,p)\right) < 0.
\end{align*}
This concludes the proof.
\end{proof}

\subsection{Proof of Corollary \ref{cor:C4andC6}}

We first consider $C_{4M}$. When $(3,M)=1$, the minimum index $a(C_{4M}) = 2M$ is given by an element of order $2$. The next smallest index is $3M$ and is given by an element of order $4$. Theorem \ref{thm:nonvanishing}(ii) shows that $D_{\Q,C_{4M}}(s)$ has a pole of order $b_{3M}(\Q,C_{4M}) = 1$ at $s=\frac{1}{3M}$. Thus, it suffices to prove that $\theta(\Q,C_{4M})<\frac{1}{3M}$.

We apply Corollary \ref{cor:uncond} with $D = \frac{\ell}{4M(\ell-1)}$ for $\ell$ the smallest prime dividing $M$, which is the reciprocal of the index of an element of order $\ell$. This gives
\begin{align*}
\theta(\Q,C_{4M}) &\le \frac{1}{2M} - \frac{\frac{1}{2M} - \frac{\ell}{4M(\ell-1)}}{1 + \frac{1}{3}\left(1 - \frac{2M\ell}{4M(\ell-1)}\right) + \frac{2}{3}\left(1 - \frac{3M\ell}{4M(\ell-1)}\right)}=\frac{5}{16M} + \frac{3}{32M\ell - 48M}.\\
\end{align*}
This is smaller than $\frac{1}{3M}$ whenever
\[
\frac{3}{32\ell - 48} < \frac{11}{16}.
\]
This is true if and only if $\ell > 18/11$, and so is certainly true for $\ell \ge 5$.

We next consider $C_{6M}$. The minimum index $a(C_{6M}) = 3M$ is given by an element of order $2$. The next smallest index is $4M$ and is given by an element of order $3$. Theorem \ref{thm:nonvanishing}(iii) shows that $D_{\Q,C_{6M}}(s)$ has a pole of order $b_{4M}(\Q,C_{6M}) = 1$ at $s=\frac{1}{4M}$. Thus, it suffices to prove that $\theta(\Q,C_{6M})<\frac{1}{4M}$.

Suppose first that $5\nmid M$, so that $(30,M) = 1$. We apply Corollary \ref{cor:uncond} with $D = \frac{1}{5M}$ being the reciprocal of index of an element of order $6$, the next smallest order if $(30,M)=1$. This gives
\begin{align*}
\theta(\Q,C_{6M}) &\le \frac{1}{3M} - \frac{\frac{1}{3M} - \frac{1}{5M}}{1 + \frac{1}{3}\left(1 - \frac{3M}{5M}\right) + \frac{2}{3}\left(1 - \frac{4M}{5M}\right)} = \frac{13}{57M} < \frac{1}{4M}.
\end{align*}
If we have $5\mid M$ with $(6,M)=1$, let $D = \frac{5}{24M}$ be the reciprocal of the index of an element of order $5$. Then
\begin{align*}
\theta(\Q,C_{6M}) &\le \frac{1}{3M} - \frac{\frac{1}{3M} - \frac{5}{24M}}{1 + \frac{1}{3}\left(1 - \frac{3M}{24M/5}\right) + \frac{2}{3}\left(1 - \frac{4M}{24M/5}\right)} = \frac{62}{267M} < \frac{1}{4M}.
\end{align*}

\appendix

\section{Restricted Local Conditions and Alternate Orderings}\label{app:local_cond_and_ordering}

The methods of this paper are readily applicable to the more generalized question of counting abelian extensions ordered by other admissible invariants and with restricted local conditions, as was done for the main term in \cite{alberts-odorney2021}. We restricted the main body of the paper to discriminants so as not to overwhelm the reader with notation. In particular, the results for restricted local conditions would be quite involved to state explicitly.

\subsection{Ordering by Admissible Invariants}

Suppose we fix a weight function ${\rm wt}:G\to \R_{\ge 0}$ for which ${\rm wt}(g) = 0$ if and only if $g=1$. We say that an invariant $\inv:\prod_p H^1(K_p,G)\to I_K$ is an invariant of weight ${\rm wt}$ if, for all but finitely many places, $\nu_p(\inv(f)) = {\rm wt}(f(\tau_p))$ for $\tau_p$ a generator of the tame inertia group at $p$. Such an invariant is certainly admissible as defined in \cite{alberts2021}, so that \cite{alberts-odorney2021} give the asymptotic growth rate of the function
\[
\#\Surj_{\inv}(G_K,T,\pi;X) = \#\{\pi\in Z^1(G_K,T(\pi)): f*\pi\text{ surjective,}\ |\inv(f)|\le X\}
\]
for any abelian $T\normal G$ and homomorphism $\pi:G_K\to G$ with $\pi(G_K)T=G$.

We may define the power saving invariant for an arbitrary admissible ordering in analogy with Definition \ref{def:theta} (also with an analogous twisted version).

\begin{definition}\label{def:theta_inv}
    Let $K$ be a number field, $G\subseteq S_n$ a transitive group, ${\rm inv}$ an admissible invariant, and $D_{K,G,{\rm inv}}(s)$ the generating series of $G$-extensions of $K$ ordered by ${\rm inv}$. Define $\theta_{\rm inv}(K,G)$ to be the infimum of all real numbers $\theta$ for which
    \begin{enumerate}[(a)]
        \item $D_{K,G,{\rm inv}}(s)$ has a meromorphic continuation to ${\rm Re}(s) > \theta$ with at most finitely many poles $s_1,s_2,...,s_r$, and
        \item The number of $G$-extensions of $K$ satisfies the bound
        \[
            \#\Surj_{\rm inv}(G_K,G;X) - \sum_{j=1}^r \underset{s=s_j}{\rm Res}\left(D_{K,G,{\rm inv}}(s)\frac{X^s}{s}\right) = O(X^{\theta}),
        \]
        where the implied constant may depend on $K$ and $G\subseteq S_n$.
    \end{enumerate}
\end{definition}

The discriminant has weight given by the index function. One might predict that the results of this paper are true for any invariant with a weight function, but this is not quite the case. What is true is the following:

\begin{theorem}\label{thm:other_invariants}
    Let ${\rm wt}:G\to \R_{\ge 0}$ be a weight function which is invariant under conjugation and invertible powers, i.e. ${\rm wt}(g)= {\rm wt}(hgh^{-1})$ for any $g,h\in G$ and ${\rm wt}(g) = {\rm wt}(g^k)$ for any $g\in G$ and $k\in \Z$ such that $\langle g\rangle = \langle g^k\rangle$.
    
    Then the main results of this paper are true for the discriminant replaced by an invariant of weight ${\rm wt}$, where the index function is replaced by ${\rm wt}$ in each expression.
\end{theorem}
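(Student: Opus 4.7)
The plan is to trace through the paper and verify that every step goes through verbatim when the discriminant is replaced by an invariant of weight ${\rm wt}$, with the index function replaced by ${\rm wt}$ throughout. The discriminant enters the analysis only through the local factor $p^{-\ind(f_p(\tau_p))s}$ at places unramified in the relevant Galois closure; for an invariant of weight ${\rm wt}$ this factor becomes $p^{-{\rm wt}(f_p(\tau_p))s}$. The Galois cohomology machinery of Sections \ref{sec:local_Tate} and \ref{sec:Galois_rep}, namely Theorem \ref{thm:parametrizing} and Definition \ref{def:twisted_perm_rep}, makes no reference to the ordering and is reused unchanged.

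First I would rerun the proof of Theorem \ref{thm:euler_factors} with ${\rm wt}$ in place of $\ind$. The partition of the local sum over $\res_{I_p}(Z^1(K_p,T))$ into $G_K$-orbits of $\Hom((\Q/\Z)^*, T(\pi))$ requires that ${\rm wt}$ be well-defined on orbits. The $G_K$-action on $a\in\Hom((\Q/\Z)^*,T(\pi))$ is given by $(g.a)(x) = \pi(g)\, a(x^{\chi(g)^{-1}})\,\pi(g)^{-1}$, so evaluating at a generator of $(\Q/\Z)^*$ replaces an element $t=a(x)\in T$ by a conjugate of an invertible power of $t$. By the two invariance hypotheses on ${\rm wt}$, this leaves the weight unchanged, so we may define ${\rm wt}(o)$ for each orbit $o$, and the Euler factor decomposition becomes
\[
\frac{1}{|H^0(K_p,T)|}\sum_{f_p}\langle f_p, h\rangle\, {\rm inv}(f_p)^{-s} = \sum_{o}{\rm tr}(\rho_h|_{\C o}(\Fr_p))\, p^{-{\rm wt}(o)s}.
\]

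Next I would verify that Corollaries \ref{cor:mero_moroz}, \ref{cor:meromorphic_continuation}, and \ref{cor:poles} go through verbatim: the proofs use only the shape of the Euler factor, together with the existence of a minimal nonzero weight $a_{\rm wt}(T) := \min_{t\ne 1} {\rm wt}(t) > 0$, which is guaranteed by ${\rm wt}(g)=0 \iff g=1$ on a finite group. Factoring out Artin $L$-functions $L({\rm wt}(o)s, \rho_h|_{\C o})$ leaves an Euler product $B(s)$ absolutely convergent on ${\rm Re}(s) > 1/(2a_{\rm wt}(T))$, with poles at $s=1/d$ for $d\in {\rm wt}(T-\{1\})$ of the analogous order. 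The Tauberian theorem (Theorem \ref{thm:xi+1}) is abstract and applies to any Dirichlet series meeting its hypotheses, and the vertical bound analysis of Corollary \ref{cor:w_vertical_bounds} and the subconvexity optimization of Theorem \ref{thm:best_bound} are unchanged apart from substituting ${\rm wt}$ for $\ind$. The non-vanishing arguments of Section \ref{sec:examples_lower_order} rely only on the Euler factor structure and Lemma \ref{lem:poset_nonvanishing}, so they transfer too.

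The main obstacle is precisely the orbit-invariance check in the first step, and I expect this to be the only place where the two hypotheses (conjugation-invariance and invertible-power-invariance) are genuinely needed; everything downstream is formal replacement of $\ind$ by ${\rm wt}$. A secondary bookkeeping issue is that finitely many ``bad'' Euler factors (at places dividing $|T|\disc(\pi)\disc(h)\infty$ or where the invariant differs from its tame formula) must be absorbed into $B(s)$, but since there are only finitely many such places and each such factor is entire, this does not disrupt the region of convergence. With these verifications in hand, all statements in the introduction — Theorems \ref{thm:generating_mero} and \ref{thm:generating_mero_twist}, Corollaries \ref{cor:uncond}, \ref{cor:GLH}, \ref{cor:uncond_twist}, \ref{cor:GLH_twist}, and the non-vanishing statements of Theorem \ref{thm:nonvanishing} — carry over with ${\rm wt}$ in place of $\ind$.
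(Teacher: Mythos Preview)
Your proposal is correct and follows essentially the same approach as the paper: the paper's own justification consists of the single observation that the only nontrivial step is in Theorem \ref{thm:euler_factors}, where the weight must be constant on $G_K$-orbits of $\Hom((\Q/\Z)^*,T(\pi))$, and that the two invariance hypotheses (conjugation and invertible powers) are precisely what is needed since the Galois action on $T$ is by conjugation and on $(\Q/\Z)^*$ by invertible powers via the cyclotomic character. Your write-up is in fact more detailed than the paper's, which leaves the downstream verifications implicit.
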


The only real consideration is in Theorem \ref{thm:euler_factors}. In order for the proof to go through that the Euler factors of $\widehat{w_s}(h)$ are sums of ${\rm tr}(\rho_{h|_{\C o}}(\Fr_p))p^{-{\rm wt}(o)s}$, the weight function needs to factor through the Galois orbits of $\Hom((\Q/\Z)^*,T)$. The Galois action on $T$ is via conjugation inside of $G$, while the Galois action on $(\Q/\Z)^*$ is by invertible powers through the cyclotomic map. If ${\rm wt}$ is invariant under both of these actions, then the proof goes through.

This condition is a natural one. The inertia subgroups are only well-defined up to conjugation, so by asking ${\rm wt}$ to be invariant under conjugation we are really asking that the invariant is independent of the choice of embedding for the decomposition group $G_{K_p}\hookrightarrow G_K$. It is also the case that the tame inertia group does not have a canonical generator. Thus, asking that ${\rm wt}$ is invariant under invertible powers implies that the invariant is independent of the choice of generator of tame inertia. While it is still possible to fix some of these choices and ask a counting question, the results in this paper suggest that the corresponding generating series may not be as nice.

One example of particular interest is the product of ramified primes invariant, ${\rm ram}$. This is an invariant of weight ${\rm wt}(g) = 1$ whenever $g\ne 1$. Theorem \ref{thm:other_invariants} applied to Theorem \ref{thm:generating_mero}, Corollary \ref{cor:uncond}, and Corollary \ref{cor:GLH} implies the following:

\begin{corollary}
    Let $K$ be a number field, $G$ an abelian group, and ${\rm ram}$ the product of ramified primes ordering. Then
    \begin{enumerate}[(i)]
        \item the generating series $D_{K,G,{\rm ram}}(s)$ is meromorphic on ${\rm Re}(s)>0$, and in the right halfplane ${\rm Re}(s) > 1/2$ has only the pole at $s=1$ of order exactly
        \[
            b_{\rm ram}(K,G)=\#\{\text{ orbits of }G-\{1\}\text{ under }\chi\},
        \]
        for $\chi:G_K\to \hat{Z}^{\times}$ the cyclotomic character.
        \item $\theta_{\rm ram}(K,G) \le 1 - \frac{3}{6+[K:\Q](|G|-1)}$, and
        \item the generalized Lindel\"of Hypothesis implies that $\theta_{\rm ram}(K,G) \le 1/2$.
    \end{enumerate}
\end{corollary}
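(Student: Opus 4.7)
The plan is to reduce everything to an application of Theorem~\ref{thm:other_invariants}, which transfers the three discriminant results cited (Theorem~\ref{thm:generating_mero}, Corollary~\ref{cor:uncond}, Corollary~\ref{cor:GLH}) directly to any invariant of the appropriate type. All that remains is to verify that the product of ramified primes has such a weight and to compute what the transferred formulas specialize to.

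First I would check the hypotheses of Theorem~\ref{thm:other_invariants} for the weight ${\rm wt}(g)=1$ if $g\ne 1$ and ${\rm wt}(1)=0$. Since $G$ is abelian, conjugation invariance is automatic: ${\rm wt}(hgh^{-1})={\rm wt}(g)$. Since ${\rm wt}$ depends only on whether $g$ is trivial, and invertible powers preserve triviality, ${\rm wt}(g^k)={\rm wt}(g)$ whenever $\langle g^k\rangle=\langle g\rangle$. So ${\rm wt}$ factors through the Galois orbits of $\Hom((\Q/\Z)^*,T)$ for any action, which is exactly what is needed for the proof of Theorem~\ref{thm:euler_factors} (and hence the downstream results) to go through with ${\rm wt}$ in place of $\ind$. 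Note also that $a_{\rm wt}(G)=\min_{g\ne 1}{\rm wt}(g)=1$ and ${\rm wt}(G-\{1\})=\{1\}$.

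For (i), the transferred Theorem~\ref{thm:generating_mero} asserts that $D_{K,G,{\rm ram}}(s)$ is meromorphic on ${\rm Re}(s)>0$, and in the region ${\rm Re}(s)>1/(2a_{\rm wt}(G))=1/2$ the only possible pole is at $s=1/d$ for $d\in {\rm wt}(G-\{1\})=\{1\}$, of order at most $b_{\rm ram}(K,G)$. The fact that the order is \emph{exactly} $b_{\rm ram}(K,G)$ is the standard Wright-style argument for the main term: since $s=1/a_{\rm wt}(G)$ is the rightmost possible pole, \cite{alberts-odorney2021} establishes a main asymptotic term of the predicted form $c\,X(\log X)^{b_{\rm ram}(K,G)-1}$ with $c>0$ for the ${\rm ram}$-ordering, which forces the corresponding pole to have the maximal allowed order.

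For (ii) I apply the transferred Corollary~\ref{cor:uncond} with $D=2a_{\rm wt}(G)=2$. Every nontrivial $g\in G$ satisfies ${\rm wt}(g)=1<2=D$, so the denominator becomes
\[
1+\sum_{g\in G-\{1\}}\frac{[K:\Q]}{3}\left(1-\tfrac{1}{2}\right)=1+\frac{[K:\Q](|G|-1)}{6},
\]
and the numerator is $1-\tfrac{1}{2}=\tfrac{1}{2}$; clearing fractions yields the advertised bound $1-\tfrac{3}{6+[K:\Q](|G|-1)}$. For (iii), the transferred Corollary~\ref{cor:GLH} immediately gives $\theta_{\rm ram}(K,G)\le 1/(2a_{\rm wt}(G))=1/2$. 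The only conceivable obstacle here is verifying that Theorem~\ref{thm:other_invariants} really does transfer Corollaries~\ref{cor:uncond} and~\ref{cor:GLH}, not just Theorem~\ref{thm:generating_mero}; but the proofs of those corollaries proceed entirely through Theorem~\ref{thm:best_bound}, whose ingredients (meromorphic continuation of $\widehat{w_s}(h)$, subconvexity for the Hecke $L$-functions extracted from $\rho_h$) depend only on the description of the Euler factors furnished by Theorem~\ref{thm:euler_factors}, and the latter was exactly the content verified by Theorem~\ref{thm:other_invariants}.
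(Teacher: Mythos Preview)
Your proposal is correct and follows exactly the approach the paper uses: the paper's entire proof is the sentence ``Theorem~\ref{thm:other_invariants} applied to Theorem~\ref{thm:generating_mero}, Corollary~\ref{cor:uncond}, and Corollary~\ref{cor:GLH} implies the following,'' and you have simply filled in the verification that the weight ${\rm wt}\equiv 1$ on $G-\{1\}$ satisfies the hypotheses, computed the specializations (in particular taking $D=2a_{\rm wt}(G)=2$ for part~(ii)), and supplied the reference to \cite{alberts-odorney2021} for the exact order of the pole at $s=1$.
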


This gives an unconditional power savings for each abelian $G$ of the form
\[
\#\Surj_{\rm ram}(G_K,G;X) = X P_{K,G,{\rm ram}}(\log X) + O\left(X^{1 - \frac{3}{6+[K:\Q](|G|-1)}+\epsilon}\right).
\]
The generalized Linedl\"of hypothesis improves this to $O(X^{1/2+\epsilon})$.

\subsection{Restricted Local Conditions}

The fact that Theorem \ref{thm:euler_factors} applies to $\widehat{w_s}(h)$ for \emph{any} $h\in H^1(K,T(\pi)^*)$ implies that we can use it to produce meromorphic continuations for any finite sum of $\widehat{w_s}(h)$. It is proven in \cite{alberts-odorney2021} that for a family of local conditions $\mathcal{L} = (L_p)$ for $L_p\subseteq H^1(K_p,T(\pi))$ satisfying
\begin{itemize}
    \item $p\mapsto L_p$ is Frobenian
    \item For all but finitely many $p$, $L_p$ is a union of coests of $H_{ur}^1(K_p,T(\pi))$ containing the identity coset,
\end{itemize}
the generating Dirichlet series for
\[
H_{\mathcal{L}}^1(K,T(\pi)) = H^1(K,T(\pi)) \cap \prod_p L_p
\]
ordered by an admissible invariant is a sum of finitely many of the $\widehat{w_{\mathcal{L},s}}(h)$.

It is possible to keep a close track of the Euler factors of $\widehat{w_{\mathcal{L},s}}(h)$ to prove a version of Corollary \ref{cor:meromorphic_continuation} where the orbits in the product is determined by which ramification types are allowed by $\mathcal{L}$. However, it is possible that, if $p\mapsto L_p$ is determined by the splitting type of $p$ in a nontrivial extension, the analogous statement to Corollary \ref{cor:meromorphic_continuation} will include non-integer rational powers of Artin $L$-functions. This gives a meromorphic continuation of the generating series to a \emph{branch cut} of the complex plane.

In order for Theorem \ref{thm:xi+1} to produce a power savings, we need an honest meromorphic continuation which means we would need to know which $\mathcal{L}$ give only integer powers of Artin $L$-functions in the analogous decomposition to Corollary \ref{cor:meromorphic_continuation}. Giving a complete classification would be to cumbersome for this appendix, but it is clear that at least the following is true:

\begin{theorem}
    Let $\mathcal{L} = (L_p)$ be a family of local conditions $L_p\subseteq H^1(K_p,T(\pi))$ such that there exists a subset $H\subseteq G$ closed under conjugation and invertible powers with $1\subsetneq H$ for which
    \[
    L_p = \{f\in H^1(K_p,T(\pi)) : f(I_p)\subseteq H\}
    \]
    for all but finitely many primes $p$.

    Then the counting function
    \[
        \#\Surj_{\mathcal{L}}(G_K,T,\pi;X) = \#\left\{f\in Z^1(G_K,T(\pi)) : f*\pi\text{ surjective},\ f\in \prod_p L_p,\ |\disc(f*\pi)| \le X\right\}
    \]
    satisfies the main results of this paper, where the definitions of $a(T)$, $b_d(K,T(\pi))$ are modified to give $a_H(T)$ and $b_{H,d}(K,T(\pi))$ by only considering the indices of $g\in H-\{1\}$, and the analogous power saving bounds $\theta_H(K,T)$ has unconditional bounds analogous to Corollary \ref{cor:uncond} and \ref{cor:uncond_twist} with the sum in the denominator over only those $g\in H-\{1\}$ with $\ind(g) < D$.
\end{theorem}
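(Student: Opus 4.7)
The plan is to mirror the four-step structure (Poisson summation, explicit Euler factors, factorization into integer powers of Artin $L$-functions, Tauberian extraction) used to prove Theorems \ref{thm:generating_mero_twist} and \ref{thm:best_bound}, adjusting each step to incorporate the restriction $f(I_p)\subseteq H$. The closure hypotheses on $H$ are exactly what is needed to guarantee that the analog of Corollary \ref{cor:meromorphic_continuation} still involves only \emph{integer} powers of Artin $L$-functions; once that analog is secured, the remaining analytic machinery is identical to the unrestricted case.

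First I would apply the Poisson summation framework of Subsection \ref{subsec:AO} (originating in \cite{alberts-odorney2021}) to write
\[
\sum_{f\in H^1_{\mathcal{L}}(K,T(\pi))}|\disc(f*\pi)|^{-s}=\frac{|H^0(K,T(\pi))|}{|H^0(K,T(\pi)^*)|}\sum_{h\in H^1_{\mathcal{L}^\perp}(K,T(\pi)^*)}\widehat{w_{\mathcal{L},s}}(h),
\]
where each $\widehat{w_{\mathcal{L},s}}(h)$ has $p$-th Euler factor
\[
\frac{1}{|H^0(K_p,T(\pi))|}\sum_{f_p\in L_p}\langle f_p,h|_{G_{K_p}}\rangle\,|\disc(f_p*\pi|_{G_{K_p}})|^{-s}.
\]
This is legitimate because $\mathcal{L}$ is Frobenian and each $L_p$ contains the trivial class, so the dual Selmer group $H^1_{\mathcal{L}^\perp}(K,T(\pi)^*)$ is finite.

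Next I would prove the restricted-$\mathcal{L}$ analog of Theorem \ref{thm:euler_factors}. At any place where $L_p=\{f : f(I_p)\subseteq H\}$, the argument of Section \ref{sec:Galois_rep} goes through unchanged except that the tame identification $f_p|_{I_p}\circ\iota_p\leftrightarrow a\in\Hom((\Q/\Z)^*,T(\pi))$ translates the restriction $f_p(I_p)\subseteq H$ into $\mathrm{image}(a)\subseteq H$. The crucial observation is that
\[
\Hom_H((\Q/\Z)^*,T(\pi)):=\{a\in\Hom((\Q/\Z)^*,T(\pi)):\mathrm{image}(a)\subseteq H\}
\]
is Galois-stable: as recorded in Lemma \ref{lem:twisted_bilinear}, the Galois action combines conjugation through $\pi$ on the target $T$ with inversion of the cyclotomic character on $(\Q/\Z)^*$, and the two closure hypotheses on $H$ are precisely what is needed to preserve image-containment under both. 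Hence $\Hom_H((\Q/\Z)^*,T(\pi))$ decomposes into Galois orbits, and each good Euler factor becomes
\[
\sum_{o\subseteq\Hom_H((\Q/\Z)^*,T(\pi))}\mathrm{tr}(\rho_h|_{\C o}(\Fr_p))\,p^{-\ind(o)s}.
\]
Imitating Corollary \ref{cor:meromorphic_continuation}, I would factor this as
\[
\widehat{w_{\mathcal{L},s}}(h)=B_{\mathcal{L},h}(s)\prod_{o\subseteq\Hom_H((\Q/\Z)^*,T(\pi))}L(\ind(o)s,\rho_h|_{\C o}),
\]
where $B_{\mathcal{L},h}(s)$ is an Euler product absolutely convergent on ${\rm Re}(s)>\frac{1}{2a_H(T)}$ with $a_H(T):=\min_{g\in H\setminus\{1\}}\ind(g)$. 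Since each $\rho_h|_{\C o}$ remains a twisted permutation representation induced from a Hecke character (Lemma \ref{lem:twistedperm}), the subconvexity input of Section \ref{sec:subconvexity} applies verbatim, and Theorem \ref{thm:xi+1} plus the optimization of Section \ref{sec:optimizing} yield every main result of the paper with $a(T),b_d(K,T(\pi))$ replaced by $a_H(T),b_{H,d}(K,T(\pi))$ and the sums over $g\in T\setminus\{1\}$ in the denominators of Corollaries \ref{cor:uncond} and \ref{cor:uncond_twist} restricted to $g\in H\setminus\{1\}$.

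The main obstacle will be verifying the Galois-stability of $\Hom_H((\Q/\Z)^*,T(\pi))$ carefully, since the action genuinely mixes source and target data and one must confirm that the combined closure of $H$ under conjugation and invertible powers is jointly sufficient. A secondary, purely bookkeeping concern is the finitely many ``bad'' primes where $L_p$ is only specified to be a union of unramified cosets rather than literally $\{f(I_p)\subseteq H\}$; these contribute entire Euler factors that can be absorbed into $B_{\mathcal{L},h}(s)$ without affecting the meromorphic continuation or the vertical growth bounds. Finally, the sieve to surjective coclasses generalizes verbatim, because for every $T'\normal G$ with $T'\pi(G_K)=G$ the analogous restricted-$\mathcal{L}$ setup produces a generating series of the same shape, and any potential pole cancellations within the sieve are controlled exactly as in the proof of Theorem \ref{thm:best_bound}.
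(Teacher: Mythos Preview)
Your proposal is correct and follows exactly the approach the paper intends: the paper does not give a detailed proof of this theorem but instead states that ``it is clear that at least the following is true'' after explaining that the key requirement is for the restricted ramification types to form a Galois-stable subset of $\Hom((\Q/\Z)^*,T(\pi))$, which is precisely what closure under conjugation and invertible powers guarantees (compare the parallel discussion after Theorem \ref{thm:other_invariants}). Your identification of the Galois-stability of $\Hom_H((\Q/\Z)^*,T(\pi))$ as the crux, together with the observation that the finitely many exceptional primes are absorbed into the holomorphic factor $B_{\mathcal{L},h}(s)$, is exactly the content the paper leaves implicit.
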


Checking that the lower order terms, or even the main term, do not vanish becomes significantly more involved when we restrict to a family of local conditions. Not only does the work in Section \ref{sec:examples_lower_order} become more difficult to perform, but we now have to worry about Grunwald--Wang counter examples. For the main term when $T=G$ has the trivial action this is considered by Wood \cite{wood2009}, whereas for $T\ne G$ this was originally overlooked in \cite{alberts-odorney2021} and was corrected in a follow-up Corrigendum \cite{alberts-odorney2023}. Essentially, there are certain local conditions that are never achievable which causes not only the main term, but the entire generating Dirichlet series to vanish.

An interesting question is whether there exist weaker Grunwald--Wang counter examples for which the main term does not vanish, but subsequent lower order terms do. In other words, are there any cases where the analogous statement to Conjecture \ref{conj:Q_order_of_poles} for restricted local conditions is true for the rightmost pole but false for another pole?

\bibliographystyle{alpha}

\bibliography{Power_Savings_for_Abelian_Extensions.bbl}

\begin{thebibliography}{CDyDO06}

\bibitem[Alb21]{alberts2021}
Brandon Alberts.
\newblock Statistics of the first {G}alois cohomology group: A refinement of
  {M}alle’s conjecture.
\newblock {\em Algebra {\&} Number Theory}, 15(10):2513–2569, 2021.

\bibitem[AO21]{alberts-odorney2021}
Brandon Alberts and Evan O’Dorney.
\newblock Harmonic analysis and statistics of the first {G}alois cohomology
  group.
\newblock {\em Research in the Mathematical Sciences}, 8(3), 2021.

\bibitem[AO23]{alberts-odorney2023}
Brandon Alberts and Evan O’Dorney.
\newblock Corrigendum to “harmonic analysis and statistics of the first
  {G}alois cohomology group”.
\newblock {\em Research in the Mathematical Sciences}, 10(3), 2023.

\bibitem[BCP97]{MAGMA}
Wieb Bosma, John Cannon, and Catherine Playoust.
\newblock The {M}agma algebra system {I}: The user language.
\newblock {\em Journal of Symbolic Computation}, 24(3-4):235--265, 1997.

\bibitem[BFSLV22]{bucur-florea-serrano-lopez-varma2022}
Alina Bucur, Alexandra Florea, Allechar Serrano~López, and Ila Varma.
\newblock Power-saving error terms for the number of {$D_4$}-quartic extensions
  over a number field ordered by discriminant, Sep 2022.

\bibitem[Bha14]{bhargava2014}
Manjul Bhargava.
\newblock The geometric sieve and the density of squarefree values of invariant
  polynomials, Jan 2014.
\newblock Preprint available at \url{https://arxiv.org/abs/1402.0031}.

\bibitem[BST12]{bhargava-shankar-tsimerman2012}
Manjul Bhargava, Arul Shankar, and Jacob Tsimerman.
\newblock On the {D}avenport–{H}eilbronn theorems and second order terms.
\newblock {\em Inventiones mathematicae}, 193(2):439–499, Dec 2012.

\bibitem[BSW15]{bhargava-shankar-wang2015}
Manjul Bhargava, Arul Shankar, and Xiaoheng Wang.
\newblock Geometry-of-numbers methods over global fields {I}: Prehomogeneous
  vector spaces, Dec 2015.
\newblock Preprint available at \url{https://arxiv.org/abs/1512.03035}.

\bibitem[BTT23]{bhargava-taniguchi-thorne2023}
Manjul Bhargava, Takashi Taniguchi, and Frank Thorne.
\newblock Improved error estimates for the davenport–heilbronn theorems.
\newblock {\em Mathematische Annalen}, Oct 2023.

\bibitem[CDyDO02]{cohen-diaz-y-diaz-olivier2002}
Henri Cohen, Francisco Diaz~y Diaz, and Michel Olivier.
\newblock Enumerating quartic dihedral extensions of {$\Q$}.
\newblock {\em Compositio Mathematica}, 133(1):65--93, Aug 2002.

\bibitem[CDyDO05]{cohen-diaz-y-diaz-olivier2005}
Henri Cohen, Francisco Diaz~y Diaz, and Michel Olivier.
\newblock Counting cyclic quartic extensions of a number field.
\newblock {\em Journal de Théorie des Nombres de Bordeaux}, 17(2):475–510,
  2005.

\bibitem[CDyDO06]{cohen-diaz-y-diaz-olivier2006}
Henri Cohen, Francisco Diaz~y Diaz, and Michel Olivier.
\newblock Counting discriminants of number fields.
\newblock {\em Journal de Théorie des Nombres de Bordeaux}, 18(3):573–593,
  2006.

\bibitem[Del54]{delange1954}
Hubert Delange.
\newblock G{\'e}n{\'e}ralisation du th{\'e}or{\`e}me de {Ikehara}.
\newblock {\em Annales scientifiques de l'{\'E}cole Normale Sup{\'e}rieure},
  71(3):213--242, 1954.

\bibitem[DHW12]{dekimpe-hartl-wauters2012}
Karel Dekimpe, Manfred Hartl, and Sarah Wauters.
\newblock A seven-term exact sequence for the cohomology of a group extension.
\newblock {\em Journal of Algebra}, 369:70–95, Nov 2012.

\bibitem[FLN18]{frei-loughran-newton2018}
Christopher Frei, Daniel Loughran, and Rachel Newton.
\newblock The {H}asse norm principle for abelian extensions.
\newblock {\em American Journal of Mathematics}, 140(6):1639–1685, 2018.

\bibitem[Har16]{hardy1916}
G.~H. Hardy.
\newblock On {D}irichlet’s divisor problem.
\newblock {\em Proceedings of the London Mathematical Society}, 2(15):1–25,
  1916.

\bibitem[HS53]{hochschild-serre1953}
G.~Hochschild and J.-P. Serre.
\newblock Cohomology of group extensions.
\newblock {\em Transactions of the American Mathematical Society},
  74(1):110–134, 1953.

\bibitem[Hut11]{hutchings2011}
Michael Hutchings, Apr 2011.

\bibitem[IK04]{iwaniec-kowalski2004}
Henryk Iwaniec and Emmanuel Kowalski.
\newblock {\em Analytic Number Theory}.
\newblock American Mathematical Society, 2004.

\bibitem[Kl{\"u}05]{klunersHab2005}
J{\"u}rgen Kl{\"u}ners.
\newblock {\em \"Uber die Asymptotik von Zahlk\"orpern mit vorgegebener
  {G}aloisgruppe}.
\newblock Shaker, 2005.

\bibitem[Kl{\"u}12]{kluners2012}
J{\"u}rgen Kl{\"u}ners.
\newblock The distribution of number fields with wreath products as {Galois}
  groups.
\newblock {\em International Journal of Number Theory}, 08(3):845--858, 2012.

\bibitem[KP21]{koymans-pagano2021}
Peter Koymans and Carlo Pagano.
\newblock On malle's conjecture for nilpotent groups, i, Mar 2021.

\bibitem[Lan15]{landau1915}
Edmund Landau.
\newblock {\"U}ber die anzahl der gitterpunkte in gewissen bereichen.
\newblock {\em Nachrichten von der Gesellschaft der Wissenschaften zu
  G{\"o}ttingen, Mathematisch-Physikalische Klasse}, page 209–243, 1915.

\bibitem[LO12]{lee-oh2012}
SEOK~HYEONG LEE and GYUJIN OH.
\newblock On the distribution of cyclic number fields of prime degree.
\newblock {\em International Journal of Number Theory}, 08(06):1463–1475,
  2012.

\bibitem[Mal02]{malle2002}
Gunter Malle.
\newblock On the distribution of {Galois} groups.
\newblock {\em Journal of Number Theory}, 92(2):315--329, 2002.

\bibitem[Mal04]{malle2004}
Gunter Malle.
\newblock On the distribution of {Galois} groups, {II}.
\newblock {\em Experimental Mathematics}, 13(2):129--135, 2004.

\bibitem[Mor88]{moroz_1988}
B.~Z. Moroz.
\newblock On a class of {D}irichlet series associated to the ring of
  representations of a {W}eil group.
\newblock {\em Proceedings of the London Mathematical Society},
  s3-56(2):209–228, 1988.

\bibitem[MT23]{mcgown-tucker2023}
Kevin~J. McGown and Amanda Tucker.
\newblock An improved error term for counting {$D_4$}-quartic fields, Nov 2023.

\bibitem[Rou11]{roux2011}
Mathieu Roux.
\newblock Th{\'e}orie de l'information, s{\'e}ries de {D}irichlet, et analyse
  d'algorithmes.
\newblock {\em Th{\'e}orie de l'information [cs.IT]. {U}niversit{\'e} de
  {C}aen}, 2011.
\newblock Francais. ⟨NNT : ⟩. ⟨tel-01076421⟩.

\bibitem[S{\"o}h97]{sohne1997}
Peter S{\"o}hne.
\newblock An upper bound for {H}ecke zeta-functions with {G}roessencharacters.
\newblock {\em Journal of Number Theory}, 66(2):225–250, 1997.

\bibitem[Ten15]{tenenbaum2015}
G{\'e}rald Tenenbaum.
\newblock {\em Introduction to analytic and probabilistic number theory}.
\newblock American Mathematical Society, 2015.

\bibitem[TT13]{taniguchi-thorne2013}
Takashi Taniguchi and Frank Thorne.
\newblock Secondary terms in counting functions for cubic fields.
\newblock {\em Duke Mathematical Journal}, 162(13):2451–2508, 2013.

\bibitem[T{\"u}r15]{turkelli2015}
Seyfi T{\"u}rkelli.
\newblock Connected components of {Hurwitz} schemes and {Malle}'s conjecture.
\newblock {\em Journal of Number Theory}, 155:163--201, 2015.

\bibitem[Wan17]{jwang2017}
Jiuya Wang.
\newblock Secondary term of asymptotic distribution of {$S_3\times A$}
  extensions over {$\Q$}, Oct 2017.
\newblock Preprint available at \url{https://arxiv.org/abs/1710.10693}.

\bibitem[Wan21]{jwang2021}
Jiuya Wang.
\newblock {M}alle's conjecture for {$S_n\times A$} for $n=3,4,5$.
\newblock {\em Compositio Mathematica}, 157(1):83–121, 2021.

\bibitem[Wil95]{wiles1995}
Andrew Wiles.
\newblock Modular elliptic curves and {F}ermat's {L}ast {T}heorem.
\newblock {\em The Annals of Mathematics}, 141(3):443, 1995.

\bibitem[Woo09]{wood2009}
Melanie~Matchett Wood.
\newblock On the probabilities of local behaviors in abelian field extensions.
\newblock {\em Compositio Mathematica}, 146(01):102--128, Aug 2009.

\bibitem[Wri89]{wright1989}
David~J. Wright.
\newblock Distribution of discriminants of abelian extensions.
\newblock {\em Proceedings of the London Mathematical Society}, 58(1):17--50,
  1989.

\end{thebibliography}

\end{document}